\newtheoremstyle{myplain}      {10pt}{10pt}    {\itshape}{} {\scshape}{.}{.5em}{}
\newtheoremstyle{mydefinition} {10pt}{10pt}{}{}{\scshape}{.}{.5em}{}
\newtheoremstyle{myremark}     {10pt}{10pt}{}{}{\scshape}{.}{.5em}{}
\theoremstyle{myplain}
\newtheorem{thm}{Theorem}[section]
\newtheorem{lem}[thm]{Lemma}
\newtheorem{cor}[thm]{Corollary} 
\newtheorem{pro}[thm]{Proposition}
\newtheorem*{thm*}{Theorem}
\theoremstyle{mydefinition}
\newtheorem{defn}    [thm]{Definition}
\newtheorem{eg}       [thm]{Example}
\newtheorem{note}[thm]{Note}
\theoremstyle{myremark}
\newtheorem{rem}        [thm]{Remark}
\newtheorem{fact}[thm]{Fact}
\newtheorem{algo}{Algorithm} 
\newtheorem*{obsn*}{Observation}
\numberwithin{equation}{section}
\newcommand{\ZZ}{\mathbf{Z}}
\renewcommand{\Im}{\mathrm{Im}}
\DeclareMathOperator{\GL}{GL}
\DeclareMathOperator{\diag}{diag}
\DeclareMathOperator{\hgt}{ht}
\DeclareMathOperator{\Disc}{Disc}
\setlist{font=\upshape}
\title{Chinese Remainder Theorem for Cyclotomic Polynomials in $\ZZ[X]$} 
\author{Kamalakshya Mahatab}
\address{The Institute of Mathematical Sciences, Chennai}
\email{kamalakshya@imsc.res.in} 
\author{Kannappan Sampath} 
\address{Statistics and Mathematics Unit, Indian Statistical
  Institute, Bangalore-560059.}
\email{knsam.name@gmail.com}
\begin{document}
\begin{abstract}
By the Chinese remainder theorem, the canonical map \[\Psi_n:
R[X]/(X^n-1)\to \oplus_{d|n} R[X]/\Phi_d(X)\] is an isomorphism when $R$
is a field whose
characteristic does not divide $n$ and $\Phi_d$ is the $d$th cyclotomic polynomial. When $R$ is
the ring $\ZZ$ of rational integers, this map is injective but not
surjective. In this paper, we give an explicit formula for the
elementary divisors of the cokernel of $\Psi_n$ (when $R =
\mathbf{Z}$) using the prime factorisation of $n$. We also give a pictorial algorithm
using Young tableaux that takes $O(n^{3+\epsilon})$ bit operations for
any $\epsilon > 0$ to determine a basis of Smith vectors (see
Definition~\ref{defn:smith-vec}) for $\Psi_n$. In general when $R$ is
an integral
domain, we prove that the determinant of the matrix of $\Psi : R[X]/(\prod_j f_j) \to \bigoplus_j R[X]/(f_j)$ written
with respect to the standard basis is $\prod_{1 \leqslant i < j
  \leqslant n} \mathcal{R}(f_j, f_i)$, where $f_i$'s are monic
polynomials and $\mathcal{R}(f_j, f_i)$ is the resultant of $f_j$ and $f_i$.
\end{abstract}
\maketitle
\section{Introduction}
\label{sec:problem}
\subsection*{Motivation} Let $m_1, \dots, m_r$ be pairwise coprime elements in a principal ideal domain (PID) $R$, that is, for $i \neq j$, if $a \mid m_i$ and $a \mid m_j$, then, $a$ is a unit in $R$. The Chinese remainder theorem (CRT) states that, for $a_1, \dots, a_r \in R$, the system (in X)
\begin{align}
  \label{eq:sys}
  \begin{aligned}
    X &\equiv a_1 \bmod{m_1} \\
    X &\equiv a_2 \bmod{m_2} \\
    &\;\;\vdots \\
    X &\equiv a_r \bmod{m_r} 
  \end{aligned}
\end{align}
has a solution and any two solutions are congruent modulo $\prod_i
m_i$. In terms of ideals,  the natural map from the ring $R/(\prod_i
  m_i)$ to the ring $\prod_i R/ (m_i)$ is an isomorphism. 
The surjectivity of the natural map encapsulates the fact that the system \eqref{eq:sys} has a solution and the injectivity encapsulates the fact that any two solutions are congruent modulo $\prod_i m_i$.

However, such a theorem does not hold true over rings which are not PID's. For example, consider the system (in $h(X)$ over $\ZZ[X]$):
\begin{align*}
  h(X) &\equiv 1 \bmod{(X-1)} \\
  h(X) &\equiv 0 \bmod{(X+1)}.
\end{align*}
This system does not have a solution over $\ZZ[X]$: to wit, if $f_1(X), f_2(X) \in \ZZ[X]$ are such that 
\begin{align*}
  h(X) &= f_1(X) (X-1) +  1 \\
  h(X) &= f_2(X) (X+1), 
\end{align*}
then, we are led to the absurdity  $2f_2(1) = 1$. This phenemenon serves as a motivation for the questions we study in
this article.

 \subsection*{Setup} Let  $R$ be an integral domain which
is not a field, so that $R[X]$ is not a PID. Suppose that $f$ is a monic polynomial and
\[f = \prod_{i=1}^n f_i\]
where $\{f_i\}_{i=1}^n$ are pairwise coprime polynomials in $R[X]$. Consider the natural map:
\begin{align*}
  \Psi_f: R[X]/(f) &\to \bigoplus_i R[X]/(f_i) \\
  h(X) \bmod{f}  &\mapsto \bigoplus_i h(X) \bmod{f_i}. 
\end{align*}
The map becomes injective if $R$ is replaced by its field of fractions;
therefore, $\Psi_f$ is injective. However, as we have already remarked in
general, $\Psi_f$ is not surjective. As a measure of the 
failure of surjectivity, we would like to determine the cokernel $G(f)$ of the map $\Psi_f$:
%\begin{equation*}
\[
  \begin{tikzcd}
   0 \ar{r} & R[X]/(f) \ar{r}{\Psi_f} & \bigoplus_i R[X]/(f_i) \ar{r}{\overline{\Psi}_f} & G(f) \ar{r} & 0.
  \end{tikzcd}
\]
%\end{equation*}
We would also like to understand when a given element $\alpha \in
\bigoplus_i R[X]/(f_i)$ lies in the image of $\Psi_f$. To the best of
our knowledge, it seems to us that problems of this nature have not
been explicitly studied elsewhere
in the literature. 

We shall solve the above problems when $R = \ZZ$ and $f(X) = X^n - 1$ with its factorisation $\prod_{d \mid n}\Phi_d(X)$
into cyclotomic polynomials. 

\subsection*{Results} Let us consider the map $\Psi_n$ defined by:
\begin{align*}
\Psi_n : \ZZ[X]/\langle X^n - 1 \rangle &\to \bigoplus_{d \mid n}
\ZZ[X]/\langle \Phi_d(X) \rangle \\
f(X) \bmod{(X^n - 1)}&\mapsto \bigoplus_{d \mid n} f(X) \bmod{\Phi_d(X)}.
\end{align*} The associated exact sequence is: 
\begin{equation*}
  \begin{tikzcd}
   0 \ar{r} & \ZZ[X]/(X^n-1) \ar{r}{\Psi_n} & \bigoplus_{d \mid n} \ZZ[X]/(\Phi_{d}(X)) \ar{r}{\overline{\Psi}_n} & G(n) \ar{r} & 0.
  \end{tikzcd}
\end{equation*}
 The domain and codomain of $\Psi_n$ are
free $\ZZ$-modules of the same rank and therefore, the cokernel
$G(n)$ is a finite abelian group. We endow $\ZZ[X]/(X^n-1)$ with
the basis $(1, \overline{X}, \dots, \overline{X}^{n-1})$ and
$\ZZ[X]/(\Phi_d(X))$ with the basis $(1, \overline{X}, \dots,
\overline{X}^{\phi(d)-1})$. Denote the matrix of $\Psi_n$ with respect
to this basis by $A_n$. For example, we have
\begin{equation*}
A_2 = 
\begin{pmatrix}
1 & 1 \\
1 & -1
\end{pmatrix}, \quad A_3 = 
\begin{pmatrix}
1 &  1 & 1 \\
1 & 0 & -1\\
0 & 1 & -1
\end{pmatrix}, \quad A_4 = 
\begin{pmatrix}
 1  &1  &1  &1\\
 1 &-1 & 1 &-1 \\
1  &0 &-1  &0 \\
0  &1  &0 &-1
\end{pmatrix}.
\end{equation*}
The structure of the abelian group $G(n)$ is completely determined
by the elementary divisors of $A_n$ (see for instance, \cite[Theorem 7.7]{LangAlg2002}). For example, the elementary
divisors of $A_6$ are $\{1,1,1,2,6,6\}$ and the group $G(6)$ is isomorphic to
$\ZZ/2\ZZ \oplus \ZZ/6\ZZ \oplus \ZZ/6\ZZ$. We first reduce the problem of
determining the elementary divisors of $A_n$ to that of $A_{p^e}$ for
a prime $p$ (Theorem~\ref{thm:smith-and-coprime}). For a prime $p$, the matrix $A_{p^e}$ has the following structure (Lemma~\ref{lem:recursion-for-prime-powers}):
\begin{equation}
\label{eq:A-p-e}
 A_{p^e} =  \left(\begin{matrix}
          A_{p^{e-1}} & A_{p^{e-1}} & \dots & A_{p^{e-1}} & A_{p^{e-1}} \\
          I_{p^{e-1}} & 0 & \dots & 0 & -I_{p^{e-1}} \\
          0 & I_{p^{e-1}} & \dots & 0 & -I_{p^{e-1}} \\
          \vdots &    &\ddots & & \vdots\\
          0 & 0 & \dots & I_{p^{e-1}} & -I_{p^{e-1}}
        \end{matrix}\right)
\end{equation}
with $A_{p^0}  = A_1 = (1)$; we exploit this recursive structure in determining the elementary divisors of the matrix
$A_{p^e}$.

From this approach, we deduce that (Theorem~\ref{pro:characterising-ele-divisor-ratios}), if $(e_1, \dots, e_n)$ is the
tuple of elementary divisors of $A_n$ with $e_i \mid e_{i+1}$, then
the tuple $Q_n = \left(e_1, \frac{e_2}{e_1}, \dots,
  \frac{e_n}{e_{n-1}}\right)$ of quotients  is a rearrangement of the tuple
\[(\underbrace{p_1, \dots, p_1}_{\alpha_1 \text{ times}},
\underbrace{p_2, \dots, p_2}_{\alpha_2 \text{
    times}},\dots,\underbrace{p_r,\dots,p_r}_{\alpha_r
  \text{times}},\underbrace{1,\dots,1}_{n-\sum_i\alpha_i \text{
    times}})\]
where  $n = p_1^{\alpha_1}\dots p_r^{\alpha_r}$ and
$p_i$'s are distinct primes. Moreover, the $s$th $p_i$ appears at
the index $n-\frac{n}{p_i^s}+1$ in $Q_n$. Since $|G(n)| = |\det(A_n)|
$, we have that (Corollary~\ref{cor:order-of-coker}):
\[|G(n)| = \prod_{i=1}^r p_i^{\frac {n(1-p_i^{-\alpha_i})}
  {(p_i-1)}} = \prod_{k=1}^n \gcd(k,n).\]
In Appendix~\ref{sec:det-of-an}, we prove:
\begin{equation}
\det(A_n) = \prod_{\substack{d_1, d_2 \mid n \\ 1 \leqslant d_1 < d_2
  \leqslant n}} \mathcal{R}(\Phi_{d_2}, \Phi_{d_1})  = (-1)^{n-1} \prod_{i=1}^r p_i^{\frac {n(1-p_i^{-\alpha_i})} {(p_i-1)}},
\end{equation}
where $\mathcal{R}(g_1, g_2)$ is the resultant of the polynomials
$g_1$ and $g_2$. 
More generally, if $f$ is a monic polynomial over an integral domain and if
$f = \prod_{k=1}^n f_k$
is a factorisation of $f$ into monic polynomials,
then (Theorem~\ref{thm:general-result}), 
\begin{equation}
\det(\Psi_f) = \prod_{1\leqslant i < j \leqslant n} \mathcal{R}(f_j, f_i).
\end{equation}  

We notice that the group algebra $\ZZ[G]$ over $\ZZ$ of a group $G$
isomorphic to the cyclic group $\ZZ/n\ZZ$ is $\ZZ[X]/\langle X^n -
1 \rangle$. From this perspective, the absolute value of the
determinant of $\Psi_n$ is the index of the group algebra $\ZZ[G]$ in
$\bigoplus_{j=0}^n \ZZ[X]/\Phi_{p^j}(X)$.  Raymond Ayoub and Christine
Ayoub determine this index \cite[Theorem 7(C)]{Ayoubs1969}. They
also determine a basis for this quotient $\ZZ$-module, which is then
used to determine a basis of Smith vectors (Definition~\ref{defn:smith-vec}) for the group algebra $\ZZ[G]$, in the case
$n=p^e$ for a prime $p$.

 In this paper, we carry out the
program of determining a basis of Smith vectors for a general $n$ (Section~\ref{sec:algo-for-smith-vectors}) by a pictorial
algorithm involving Young diagrams. A basis of Smith vectors for a
general $n$ can be realised as the columns of the matrix
$U_n^{-1}$ for some $U_n \in \GL_n(\ZZ)$ for which there
exists  a $V_n \in \GL_n(\ZZ)$ such that $U_nA_nV_n$ is
the Smith normal form of $A_n$
(Lemma~\ref{lem:smith-vectors-are-columns-of-AV}). 
The best known algorithm \cite[Proposition~7.20]{ASThesis} for
computing the Smith normal form of $A_n$ and the unimodular transformations
takes $O(n^{2+\theta+\epsilon})$ bit operations for any $\epsilon > 0$
where $O(n^\theta)$ is the bit complexity in multiplying two $n \times
n$ matrices over a ring $R$. In \cite{MMultVV}, it is proven that
$2 \leqslant \theta \leqslant 2.373$. Our algorithm determines a basis of Smith
vectors for a general $n$ in $O(n^{3+\epsilon})$ bit operations for any $\epsilon >
0$ without actually computing these
transformation matrices (Theorem~\ref{thm:main-algo}). The output of
the algorithm requires $O(n^{3+\epsilon})$ bits space (Lemma~\ref{lem:smith-coeff}).  

\subsection*{Framework} 
In Section~\ref{sec:smith-normal-form-an}, we compute the elementary
divisors of $A_n$. In Section~\ref{sec:basis-for-G1n}, we prove some
basic facts required in the algorithm for determining a basis of Smith vectors
for $n$ which is followed by a presentation of the algorithm in
Section~\ref{sec:algo-for-smith-vectors}. In the appendix that
follows, we compute the determinant of $A_n$ in a way that generalises
to any factorisation of a monic polynomial over a unique factorisation
domain. 
  
\section{Smith Normal Form of $A_n$}
\label{sec:smith-normal-form-an}
To establish a relationship between the Smith normal form of $A_{mn}$
and those of $A_m$ and $A_n$ for relatively prime positive integers $m$ and $n$, we begin with the following observation:  
\begin{lem}
  \label{lem:iso-of-grp-algebras}
  Given relatively prime positive integers $n$ and  $m$, the ring homomorphism
  $P_{m,n}: \ZZ[X]/(X^m-1) \otimes \ZZ[Y]/(Y^n-1) \to \ZZ[t]/(t^{mn}-1)$
  defined by:
  \[P_{m,n}(\overline{X} \otimes 1) = \overline{t}^n \text{ and } P_{m,n}(1 \otimes \overline{Y}) = \overline{t}^m\]
  is an isomorphism. Furthermore, with respect to the standard basis, the matrix of $P_{m,n}$ as a $\ZZ$-module homomorphism is a permutation matrix. 
\end{lem}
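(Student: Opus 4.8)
The plan is to construct the isomorphism explicitly and then check that the matrix is a permutation matrix by exhibiting where each standard basis vector goes. First I would verify that $P_{m,n}$ is a well-defined ring homomorphism: since $\overline{t}^n$ has order dividing $m$ in $\ZZ[t]/(t^{mn}-1)$ (because $(\overline{t}^n)^m = \overline{t}^{mn} = 1$) and $\overline{t}^m$ has order dividing $n$, the assignments $\overline{X}\mapsto \overline{t}^n$, $\overline{Y}\mapsto \overline{t}^m$ respect the relations $X^m = 1$, $Y^n = 1$; since the two factors of the tensor product live in a commutative ring, the universal property of the tensor product of commutative $\ZZ$-algebras gives a unique ring homomorphism $P_{m,n}$. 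The key point is then to identify the images of the standard $\ZZ$-basis $\{\overline{X}^i \otimes \overline{Y}^j : 0 \leqslant i < m,\ 0 \leqslant j < n\}$. We have $P_{m,n}(\overline{X}^i \otimes \overline{Y}^j) = \overline{t}^{\,ni + mj}$, and I would reduce the exponent $ni + mj$ modulo $mn$.

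The main obstacle — really the only substantive step — is to show that the map $(i,j) \mapsto ni + mj \bmod mn$ is a bijection from $\{0,\dots,m-1\}\times\{0,\dots,n-1\}$ onto $\{0,\dots,mn-1\}$. Since both sets have $mn$ elements, it suffices to prove injectivity: if $ni + mj \equiv ni' + mj' \pmod{mn}$, then reducing modulo $m$ gives $ni \equiv ni' \pmod m$, and since $\gcd(n,m)=1$ this yields $i \equiv i' \pmod m$, hence $i = i'$ as both lie in $\{0,\dots,m-1\}$; similarly reducing modulo $n$ gives $j = j'$. (This is exactly the combinatorial heart of the classical CRT, now read off on exponents rather than on residues.) Consequently $P_{m,n}$ sends the standard basis bijectively to the standard basis $\{1, \overline{t}, \dots, \overline{t}^{\,mn-1}\}$ of $\ZZ[t]/(t^{mn}-1)$, so its matrix with respect to these bases is a permutation matrix; in particular $P_{m,n}$ is a $\ZZ$-module isomorphism, and being also a ring homomorphism, it is a ring isomorphism.

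I would present this in three short steps: (1) well-definedness via the relations and the universal property; (2) the exponent-bijection lemma with the two-congruence argument above; (3) the conclusion that the matrix is a permutation matrix and hence $P_{m,n}$ is an isomorphism. No delicate estimates or case analysis are needed; the proof is essentially a transcription of CRT for the integers $m$ and $n$ into the language of exponents of a root of unity.
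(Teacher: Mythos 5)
Your proposal is correct and follows essentially the same route as the paper: both reduce the lemma to the fact that $(i,j)\mapsto ni+mj \bmod mn$ bijects $\{0,\dots,m-1\}\times\{0,\dots,n-1\}$ onto the residues mod $mn$ (the paper cites CRT for $\ZZ$; you prove the injectivity directly), so that $P_{m,n}$ permutes the standard bases. Your added verification of well-definedness via the universal property is a welcome extra detail, but the substance is identical.
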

\begin{proof}
  We note that $t^{ni+mj} \equiv t^\alpha \bmod{(t^{mn}-1)}$ if and only if $ni +  mj \equiv \alpha \bmod{mn}$. Now, by the Chinese Remainder Theorem for $\ZZ$, the set 
  \[\{ni+mj:0 \leqslant i \leqslant m-1, 0 \leqslant j \leqslant n-1\}\]
  consists all the residues $\bmod \;mn$, exactly once. Thus, $P_{m,n}$
  is a bijection between the standard bases. This proves the lemma.      
\end{proof}

\begin{lem}
\label{lem:iso-of-quotients}
Suppose that $m$ and $n$ are relatively prime positive integers.
Then, the map $T_{m,n}: \ZZ[X]/\Phi_m(X) \otimes \ZZ[Y]/\Phi_n(Y) \to
\ZZ[t]/\Phi_{mn}(t)$ defined by
\begin{equation*}
\overline{X}^i \otimes \overline{Y}^j \mapsto \overline{t}^{ni+mj}
\end{equation*}
and extending $\ZZ$-linearly is a ring isomorphism. 
\end{lem}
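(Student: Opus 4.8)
The plan is to prove that $T_{m,n}$ is a well-defined ring isomorphism by exhibiting it as a restriction/corestriction of the isomorphism $P_{m,n}$ from Lemma~\ref{lem:iso-of-grp-algebras}. First I would observe that we have canonical ring isomorphisms $\ZZ[X]/(X^m-1) \otimes \ZZ[Y]/(Y^n-1) \cong \ZZ[X,Y]/(X^m-1, Y^n-1)$, and under this identification $P_{m,n}$ sends $\overline{X} \mapsto \overline{t}^n$, $\overline{Y} \mapsto \overline{t}^m$; so $P_{m,n}$ carries the monomial $\overline{X}^i \otimes \overline{Y}^j$ to $\overline{t}^{ni+mj}$. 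Thus $P_{m,n}$ literally \emph{is} the $\ZZ$-linear extension of the rule in the statement, but with $\Phi_m, \Phi_n, \Phi_{mn}$ replaced by $X^m-1$, $Y^n-1$, $t^{mn}-1$. It therefore suffices to show that $P_{m,n}$ descends to the cyclotomic quotients.

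**Next I would** pin down the relevant ideals. Inside $\ZZ[X]/(X^m-1) \otimes \ZZ[Y]/(Y^n-1)$, consider the ideal $J$ generated by $\Phi_m(\overline{X}) \otimes 1$ and $1 \otimes \Phi_n(\overline{Y})$; one checks that the quotient by $J$ is exactly $\ZZ[X]/\Phi_m(X) \otimes \ZZ[Y]/\Phi_n(Y)$ (tensor is right exact, or just compute directly: $\big(\ZZ[X]/(X^m-1)\big)/\Phi_m \cong \ZZ[X]/\Phi_m$ since $\Phi_m \mid X^m-1$, and similarly for the other factor, and the two quotients are taken in independent variables). On the target side, the relevant ideal of $\ZZ[t]/(t^{mn}-1)$ is $(\Phi_{mn}(\overline{t}))$, with quotient $\ZZ[t]/\Phi_{mn}(t)$ — again using $\Phi_{mn} \mid t^{mn}-1$. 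So the lemma reduces to the single assertion
\begin{equation}
\label{eq:ideal-match}
P_{m,n}(J) = \big(\Phi_{mn}(\overline{t})\big) \quad \text{in } \ZZ[t]/(t^{mn}-1),
\end{equation}
for then $P_{m,n}$ induces a ring isomorphism on the quotients, which is visibly the map $T_{m,n}$ of the statement.

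**The key step, and the one I expect to be the main obstacle,** is verifying~\eqref{eq:ideal-match}. The cleanest route is to tensor everything with $\CC$ (or pass to $\ZZ[\zeta]$ for suitable roots of unity) and use the factorisations $X^m-1 = \prod_{d\mid m}\Phi_d$, etc. After inverting $m$ and $n$ (harmless for an injectivity check, since all modules in sight are free $\ZZ$-modules of finite rank and we will count ranks at the end), the Chinese Remainder Theorem gives $\ZZ[1/mn][X]/(X^m-1) \cong \prod_{d\mid m} \ZZ[1/mn][X]/\Phi_d$, and killing $\Phi_m$ picks out exactly the factor $d=m$; likewise for $Y$. So the left-hand side of~\eqref{eq:ideal-match}, after inverting $mn$, becomes $\ZZ[1/mn][X]/\Phi_m \otimes \ZZ[1/mn][Y]/\Phi_n$, which by the classical fact $\gcd(m,n)=1 \Rightarrow \QQ(\zeta_m)\otimes_\QQ\QQ(\zeta_n)\cong\QQ(\zeta_{mn})$ is a domain of the right rank $\phi(m)\phi(n)=\phi(mn)$; tracing $\overline{X}^i\otimes\overline{Y}^j\mapsto\overline t^{ni+mj}$ through CRT shows the image is exactly $\ZZ[1/mn][t]/\Phi_{mn}$, so $P_{m,n}(J)$ and $(\Phi_{mn})$ have the same image after inverting $mn$. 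It remains to remove the denominators: since $P_{m,n}$ is a $\ZZ$-module isomorphism (a permutation of the standard basis, by Lemma~\ref{lem:iso-of-grp-algebras}) and $\Phi_{mn} \mid t^{mn}-1$, the ideal $(\Phi_{mn}(\overline t))$ is a direct summand of $\ZZ[t]/(t^{mn}-1)$ as a $\ZZ$-module; comparing it with $P_{m,n}(J)$, which is likewise the image of a $\ZZ$-module direct summand $J$, and knowing the two agree after inverting $mn$, forces equality integrally because a saturated (i.e.\ torsion-free-quotient) submodule of a free $\ZZ$-module is determined by its rational span. Finally, the induced map on quotients is a ring homomorphism because $P_{m,n}$ is, it is surjective because $P_{m,n}$ is, and it is injective because source and target are free $\ZZ$-modules of the same rank $\phi(mn)$ and the map becomes an isomorphism after $\otimes \QQ$; hence $T_{m,n}$ is a ring isomorphism.
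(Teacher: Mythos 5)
Your argument is correct, but it goes by a genuinely different route than the paper. The paper constructs $T_{m,n}$ directly: it is the product of the two ring maps $\overline X\mapsto \overline t^{\,n}$ and $\overline Y\mapsto \overline t^{\,m}$ (whose well-definedness rests on the same divisibilities $\Phi_{mn}(t)\mid\Phi_m(t^n)$ and $\Phi_{mn}(t)\mid\Phi_n(t^m)$ that your argument needs), surjectivity comes from the invertibility of $\overline t$ modulo $\Phi_{mn}(t)$ together with $ni+mj=1$, and injectivity from a splitting of the kernel sequence plus the rank count $\phi(m)\phi(n)=\phi(mn)$. You instead descend the isomorphism $P_{m,n}$ of Lemma~\ref{lem:iso-of-grp-algebras} modulo matching ideals, reducing the lemma to the single integral identity $P_{m,n}(J)=(\Phi_{mn}(\overline t))$, i.e.\ $\langle\Phi_m(t^n),\Phi_n(t^m)\rangle=\langle\Phi_{mn}(t)\rangle$ in $\ZZ[t]/(t^{mn}-1)$, which you prove by the saturation principle: both ideals are $\ZZ$-module direct summands because their quotients, $\ZZ[X]/\Phi_m(X)\otimes\ZZ[Y]/\Phi_n(Y)$ and $\ZZ[t]/\Phi_{mn}(t)$, are $\ZZ$-free, so equality of rational spans suffices. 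That is sound. Two remarks on the execution: the detour through $\ZZ[1/mn]$ and the CRT splittings over that ring are more than you need (and, as stated, would themselves require a separability or resultant input), whereas only the $\mathbf{Q}$-spans matter; over $\mathbf{Q}$ the cleanest version is that $\Phi_{mn}\mid\Phi_m(t^n)$ and $\Phi_{mn}\mid\Phi_n(t^m)$ give the containment of $P_{m,n}(J)\otimes\mathbf{Q}$ in $(\Phi_{mn}(\overline t))\otimes\mathbf{Q}$, and both have codimension $\phi(m)\phi(n)=\phi(mn)$ in $\mathbf{Q}[t]/(t^{mn}-1)$, forcing equality --- this also removes the appeal to the classical fact $\mathbf{Q}(\zeta_m)\otimes_{\mathbf{Q}}\mathbf{Q}(\zeta_n)\cong\mathbf{Q}(\zeta_{mn})$, which is heavier than anything the paper uses (its proof never needs irreducibility of cyclotomic polynomials). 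What your route buys is the explicit ideal identity $\langle\Phi_m(t^n),\Phi_n(t^m),t^{mn}-1\rangle=\langle\Phi_{mn}(t),t^{mn}-1\rangle$ in $\ZZ[t]$, of independent interest, and it makes the compatibility of $T_{m,n}$ with $P_{m,n}$ (used again in Theorem~\ref{thm:smith-and-coprime}) automatic; the paper's construction, by contrast, is shorter and more elementary.
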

\begin{proof}
Consider the maps:
\begin{align*}
\phi:\ZZ[X]/(\Phi_m(X)) &\to \ZZ[t]/(\Phi_{mn}(t)) &&\text{and} & {} &&
\psi: \ZZ[Y]/(\Phi_n(Y)) &\to \ZZ[t]/(\Phi_{mn}(t)) \\
\overline{X} &\mapsto \overline{t}^n && {} & {} &&\overline{Y} &\mapsto \overline{t}^m
\end{align*}  
Now, $T_{m,n}$ is composition of the canonical map $\phi
\otimes \psi$ with the identification map $\overline{f} \otimes
\overline{g} \mapsto \overline{fg}: 
\ZZ[t]/(\Phi_{mn}(t)) \otimes \ZZ[t]/(\Phi_{mn}(t)) \to
\ZZ[t]/(\Phi_{mn}(t))$.  Thus, $T_{m,n}$ is a ring homomorphism. 
 
To prove surjectivity, we show that $\overline{t} \in
\ZZ[t]/\Phi_{mn}(t)$. Indeed, since $\overline{t}$ is invertible
in $\ZZ[t]/\Phi_{mn}(t)$ and that $\gcd(m, n) = 1$, there are integers $i,
j \in \ZZ$ such that $t^{ni + mj} \equiv t \bmod{\Phi_{mn}(t)}$.

We claim this map is also injective: letting $K$ be the kernel
of the map $T_{m,n}$, the exact sequence:
\[
\begin{tikzcd}
0\ar{r} & K \ar{r} & \frac{\ZZ[X]}{\Phi_m(X)} \otimes \frac{\ZZ[Y]}{\Phi_n(Y)} \ar{r}{T_{m,n}}
&\frac{\ZZ[t]}{\Phi_{mn}(t)} \ar{r}& 0
\end{tikzcd}
\]
splits since $\ZZ[t]/\Phi_{mn}(t)$ is a free $\ZZ$-module showing:
\begin{equation}
\frac{\ZZ[X]}{\Phi_m(X)} \otimes \frac{\ZZ[Y]}{\Phi_n(Y)} \simeq K
\oplus \frac{\ZZ[t]}{\Phi_{mn}(t)}.
\end{equation}  
Being a submodule of a free module over the PID $\ZZ$, $K$
is a free $\ZZ$-module. A comparison of the rank tells us that $K$ is
of rank $0$. Thus, $K = \{0\}$, equivalently, $T_{m,n}$ is injective.  
\end{proof}
\begin{rem}
Along the lines of the proof of Lemma~\ref{lem:iso-of-quotients}, it
may be shown that for relatively prime positive integers $m$ and $n$,
the $\ZZ$-linear extension of the map
\begin{equation}
  \label{eq:pretty-iso}
\overline{X}^i \otimes \overline{Y}^j\mapsto \overline{t}^{mj+ni}  :
\frac{\ZZ[X]}{\Phi_m(X)} \otimes \frac{\ZZ[Y]}{Y^n-1} \to
\frac{\ZZ[t]}{\Phi_m(t^n)}, \quad \substack{0 \leqslant i \leqslant \phi(m)-1\\ 0
  \leqslant j \leqslant n-1}
\end{equation}
is a ring isomorphism.
\end{rem}
\subsection{Smith Equivalence of $A_m \otimes A_n$ and $A_{mn}$} For a matrix $A$ over the integers, let $S(A)$ denote the Smith normal
form of $A$ in which all the elementary divisors are
non-negative\footnote{For later purposes, we note that $\det(S(A)) = |\det(A)|$ is non-negative.}. We now state and prove one of the main results of this section: 
\begin{thm}
  \label{thm:smith-and-coprime}
  $S(A_m \otimes A_n) = S(A_{mn})$.
\end{thm}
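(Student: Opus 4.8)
The plan is to exhibit $A_{mn}=Q\,(A_m\otimes A_n)\,P^{-1}$ for matrices $P,Q\in\GL_{mn}(\ZZ)$: then $A_{mn}$ and the Kronecker product $A_m\otimes A_n$ are equivalent over $\ZZ$, hence have the same invariant factors, and since $S(\cdot)$ is normalised to have non-negative diagonal entries this gives $S(A_{mn})=S(A_m\otimes A_n)$. The matrices $P$ and $Q$ will be read off a commutative square of $\ZZ$-module isomorphisms framing $\Psi_m\otimes\Psi_n$ and $\Psi_{mn}$:
\[
\begin{tikzcd}
\frac{\ZZ[X]}{(X^m-1)}\otimes\frac{\ZZ[Y]}{(Y^n-1)} \ar{r}{\Psi_m\otimes\Psi_n} \ar{d}{P_{m,n}} & \bigoplus_{d\mid m,\, e\mid n}\frac{\ZZ[X]}{\Phi_d(X)}\otimes\frac{\ZZ[Y]}{\Phi_e(Y)} \ar{d}{R}\\
\frac{\ZZ[t]}{(t^{mn}-1)} \ar{r}{\Psi_{mn}} & \bigoplus_{c\mid mn}\frac{\ZZ[t]}{\Phi_c(t)}
\end{tikzcd}
\]
Here the codomain of $\Psi_m\otimes\Psi_n$ has been rewritten using distributivity of $\otimes$ over $\oplus$; the codomain of $\Psi_{mn}$ has been reindexed by the bijection $(d,e)\mapsto de$ between $\{(d,e):d\mid m,\ e\mid n\}$ and $\{c:c\mid mn\}$, valid since $\gcd(m,n)=1$; the left vertical map is the isomorphism $P_{m,n}$ of Lemma~\ref{lem:iso-of-grp-algebras}, whose matrix in the standard bases is a permutation matrix; and with respect to suitable orderings of the tensored standard bases the map $\Psi_m\otimes\Psi_n$ has matrix $A_m\otimes A_n$ (the matrix of a tensor of linear maps is the Kronecker product of their matrices), a change of ordering altering this only by permutation matrices. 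It remains to produce the right vertical isomorphism $R$ and to check commutativity.

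For $R$ I would take $R=\bigoplus_{d\mid m,\, e\mid n}R_{d,e}$, where $R_{d,e}\colon\ZZ[X]/\Phi_d(X)\otimes\ZZ[Y]/\Phi_e(Y)\to\ZZ[t]/\Phi_{de}(t)$ is the ring homomorphism with $\overline{X}\mapsto\overline{t}^{n}$ and $\overline{Y}\mapsto\overline{t}^{m}$, so that $\overline{X}^{i}\otimes\overline{Y}^{j}\mapsto\overline{t}^{ni+mj}$. I stress that this is \emph{not} the isomorphism $T_{d,e}$ of Lemma~\ref{lem:iso-of-quotients} (which uses the exponents $e$ and $d$): the twist to $n$ and $m$ is precisely what makes the square commute. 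To see $R_{d,e}$ is well defined, I would verify $\Phi_{de}(t)\mid\Phi_{d}(t^{n})$ and $\Phi_{de}(t)\mid\Phi_{e}(t^{m})$ in $\ZZ[t]$: since $d\mid m$, $e\mid n$ and $\gcd(m,n)=1$ force $\gcd(d,e)=1$ and $\gcd(d,n)=1$, one computes $\gcd(de,n)=e$, so a primitive $de$-th root of unity $\zeta$ has $\zeta^{n}$ of order $d$, whence $\Phi_{d}(\zeta^{n})=0$; as $\Phi_{de}$ has simple roots and is monic with $\Phi_{d}(t^{n})\in\ZZ[t]$, this yields the claimed divisibility, and symmetrically ($\gcd(de,m)=d$) for $\Phi_{e}(t^{m})$. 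That $R_{d,e}$ is an isomorphism can be shown exactly as in the proof of Lemma~\ref{lem:iso-of-quotients}: it is surjective because $\overline{t}^{n}$, $\overline{t}^{m}$ are units and $\gcd(m,n)=1$ put $\overline{t}$ in its image; it is then a surjection of free $\ZZ$-modules of equal rank $\phi(d)\phi(e)=\phi(de)$, forcing its kernel — free, as a submodule of a free $\ZZ$-module — to have rank $0$, hence to vanish. (Equivalently, $R_{d,e}=T_{d,e}\circ(\sigma\otimes\tau)$ where $\sigma\colon\overline{X}\mapsto\overline{X}^{a}$ and $\tau\colon\overline{Y}\mapsto\overline{Y}^{b}$ are automorphisms for suitable $a$, $b$ coprime to $d$, $e$, so one may invoke Lemma~\ref{lem:iso-of-quotients} directly.) Being an isomorphism of free $\ZZ$-modules, $R$ has matrix in $\GL_{mn}(\ZZ)$.

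To check the square commutes I would evaluate both composites on a standard basis element $\overline{X}^{i}\otimes\overline{Y}^{j}$: going down then across, $P_{m,n}$ sends it to $\overline{t}^{ni+mj}$ and $\Psi_{mn}$ then to $\bigl(\overline{t}^{ni+mj}\bmod\Phi_{c}\bigr)_{c\mid mn}$; going across then down, $\Psi_m\otimes\Psi_n$ produces the tuple of componentwise reductions $\bigl(\overline{X}^{i}\otimes\overline{Y}^{j}\bigr)_{d\mid m,\,e\mid n}$, and $R_{d,e}$ sends the $(d,e)$-entry to $\overline{t}^{ni+mj}\bmod\Phi_{de}$, which agrees with the first answer under the identification $c=de$. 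Translating the square into matrices (column-vector convention) gives $A_{mn}P=Q(A_m\otimes A_n)$ with $P$ the permutation matrix of $P_{m,n}$ and $Q$ the (block-diagonal, unimodular) matrix of $R$, so $A_{mn}=Q(A_m\otimes A_n)P^{-1}$ with $Q,P^{-1}\in\GL_{mn}(\ZZ)$; therefore $S(A_{mn})=S(A_m\otimes A_n)$.

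The one genuinely delicate point is the construction of $R$. The naturally available isomorphism $T_{d,e}$ of Lemma~\ref{lem:iso-of-quotients} sends $\overline{X}^{i}\otimes\overline{Y}^{j}$ to $\overline{t}^{ei+dj}$ rather than to $\overline{t}^{ni+mj}$, so it does \emph{not} close a commutative square with $P_{m,n}$; one must pass to the exponent-twisted map $R_{d,e}$ above and re-establish that it is well defined and bijective (or note that it is a twist of $T_{d,e}$ by automorphisms of the tensor factors). All the rest is routine bookkeeping with the standard bases and with the multiplicativity of matrices under composition and tensor product.
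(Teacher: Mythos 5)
Your proof is correct and follows the paper's route: a commutative square with $P_{m,n}$ (Lemma~\ref{lem:iso-of-grp-algebras}) on the left, componentwise isomorphisms on the right, the observation that $\Psi_m\otimes\Psi_n$ has matrix $A_m\otimes A_n$ in the tensored standard bases up to permutation, and the conclusion that $A_{mn}$ and $A_m\otimes A_n$ are unimodularly equivalent, hence Smith equivalent. The one place where you depart from the paper is the right-hand vertical map, and your ``delicate point'' is well taken: the paper declares $T(m,n)=\bigoplus_{d_1\mid m,\,d_2\mid n}T_{d_1,d_2}$ with $T_{d_1,d_2}$ literally as in Lemma~\ref{lem:iso-of-quotients} (so $\overline{X}\mapsto\overline{t}^{\,d_2}$, $\overline{Y}\mapsto\overline{t}^{\,d_1}$), and with that map the square does not commute on the nose: the $(d_1,d_2)$-component of the top-then-right composite is $\overline{t}^{\,d_2i+d_1j}$ while the left-then-bottom composite gives $\overline{t}^{\,ni+mj}$, and these can differ modulo $\Phi_{d_1d_2}$ (for instance $m=4$, $n=3$, $d_1=2$, $d_2=3$, $i=0$, $j=1$ gives $\overline{t}^{\,2}\neq\overline{t}^{\,4}$ in $\ZZ[t]/\Phi_6(t)$). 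The displayed computation in the paper's proof in fact evaluates the vertical map as if $\overline{X}\mapsto\overline{t}^{\,n}$ and $\overline{Y}\mapsto\overline{t}^{\,m}$, i.e.\ it implicitly uses exactly your exponent-twisted maps $R_{d_1,d_2}$; your verification that these are well defined (via $\Phi_{d_1d_2}(t)\mid\Phi_{d_1}(t^n)$ and $\Phi_{d_1d_2}(t)\mid\Phi_{d_2}(t^m)$) and bijective, either by rerunning the rank argument of Lemma~\ref{lem:iso-of-quotients} or by writing $R_{d_1,d_2}=T_{d_1,d_2}\circ(\sigma\otimes\tau)$ with the automorphisms $\overline{X}\mapsto\overline{X}^{\,n/d_2}$, $\overline{Y}\mapsto\overline{Y}^{\,m/d_1}$, supplies precisely the detail the paper glosses over. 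The final bookkeeping (unimodularity of the matrices of $P_{m,n}$ and of the right vertical map, hence $S(A_{mn})=S(A_m\otimes A_n)$) is the same in both arguments.
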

\begin{proof}
  Consider the following diagram:
  \[ 
  \begin{tikzcd}
    \ZZ[X]/(X^m-1) \otimes \ZZ[Y]/(Y^n - 1) \ar{r}{\Psi_m \otimes \Psi_n} \ar{d}[swap]{P_{m,n}}& \bigoplus_{\substack{d_1 \mid m \\d_2 \mid n}} \ZZ[X]/\Phi_{d_1}(X) \otimes \ZZ[Y]/\Phi_{d_2}(Y) \ar{d}{T(m,n)}\\
    \ZZ[t]/(t^{mn}-1) \ar{r}{\Psi_{mn}} & \bigoplus_{d \mid  mn} \ZZ[t]/(\Phi_d(t))
  \end{tikzcd}
  \]
  The map $\Psi_m \otimes \Psi_n$ is the canonical map, defined by: 
  \[(\Psi_m \otimes \Psi_n)(X^i \otimes Y^j) = \Psi_m(X^i) \otimes \Psi_n(Y^j)\]
  and extended $\ZZ$-linearly. We shall prove that there is an
  isomorphism $T(m,n)$ that renders the diagram commutative. 

Indeed, define $T(m,n)$ by 
\begin{equation*}
 T(m,n) =\bigoplus_{\substack{d_1 \mid m \\ d_2 \mid n}} T_{d_1, d_2}
\end{equation*}
  Clearly, $T(m,n)$ is an isomorphism. As is seen by the following
  computation, $T(m,n)$ also renders the above diagram commutative:
\begin{align*}
  (\Psi_{mn} \circ P_{m,n})(\overline{X}^i \otimes \overline{Y}^j) &=
  \Psi_{mn}(\overline{t}^{ni+mj})\\
&= \oplus_{d \mid mn}  t^{ni+mj} \bmod{\Phi_{d}(t)}\\
  (T(m, n) \circ (\Psi_m \otimes \Psi_n)) (\overline{X}^i \otimes
  \overline{Y}^j) &= T(m,n)(\Psi_m(\overline{X}^i) \otimes
  \Psi_n(\overline{Y}^j)) \\ 
&= T(m,n)(\oplus_{\substack{d_1 \mid m \\ d_2 \mid n }}X^{ni} \bmod{\Phi_{d_1}(X)} \otimes Y^{mj}\bmod{\Phi_{d_2}(Y)})\\
&= \oplus_{ d \mid mn} t^{ni+mj} \bmod{\Phi_{d}(t)}.
\end{align*} 
This completes the proof. 
\end{proof}

Motivated by Theorem~\ref{thm:smith-and-coprime}, we present our strategy to determine $S(A_n)$: first determine $S(A_{p^\alpha})$ for $p^\alpha \parallel n$; since Kronecker product of diagonal matrices is a diagonal matrix, describe the smith form of a diagonal matrix; finally, use this to determine the elementary divisors of $A_n$. 

\subsection{Smith Normal Form of $A_{p^e}$ for a prime $p$} 
Let $p$ be a prime. We begin by noting that we have an explicit
formula for $\Phi_{p^e}(X)$:
\begin{equation}
\label{eq:prime-power-cyclo}
\Phi_{p^e}(X) = \sum_{i=0}^{p-1} X^{ip^{e-1}}.
\end{equation} Using this information, the following lemma determines $A_{p^e}$ recursively: 
\begin{lem}
  \label{lem:recursion-for-prime-powers}
  $A_{p^e}$ is a block matrix given by \eqref{eq:A-p-e}.
\end{lem}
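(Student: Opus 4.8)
The plan is to unwind the definitions of the quotient rings and the map $\Psi_{p^e}$ on the standard monomial bases, and then read off the block pattern in \eqref{eq:A-p-e} directly from the sizes and the action of $\Psi_{p^e}$ on the pieces $\overline{X}^{kp^{e-1}+r}$ for $0 \leqslant k \leqslant p-1$ and $0 \leqslant r \leqslant p^{e-1}-1$. Concretely, I would first record that the divisors of $p^e$ are exactly $1, p, \dots, p^e$, and that $\Phi_{p^j}(X)$ has degree $\phi(p^j) = p^{j-1}(p-1)$ for $j \geqslant 1$ while $\Phi_1 = X-1$; summing degrees confirms the codomain of $\Psi_{p^e}$ is free of rank $p^e$, matching the domain, so $A_{p^e}$ is a $p^e \times p^e$ integer matrix. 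I would also isolate the two summands I want to single out: $\bigoplus_{j=0}^{e-1}\ZZ[X]/\Phi_{p^j}(X)$, which (by the same degree count) is free of rank $p^{e-1}$ and on which $\Psi_{p^e}$ factors through reduction mod $X^{p^{e-1}}-1$ — this will produce the top block-row $(A_{p^{e-1}} \mid \dots \mid A_{p^{e-1}})$ — and the single summand $\ZZ[X]/\Phi_{p^e}(X)$, which is free of rank $p^{e-1}(p-1)$ and accounts for the remaining $(p-1)p^{e-1}$ rows forming the lower block.

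Next I would treat the top block-row. Ordering the domain basis as $(\overline{X}^{kp^{e-1}+r})$ with $r$ running fastest, the $k$-th block of $p^{e-1}$ basis vectors maps, under reduction mod any $\Phi_{p^j}(X)$ with $j \leqslant e-1$, exactly as the basis $(1,\overline{X},\dots,\overline{X}^{p^{e-1}-1})$ of $\ZZ[X]/(X^{p^{e-1}}-1)$ does, because $X^{kp^{e-1}+r} \equiv X^r \bmod{(X^{p^{e-1}}-1)}$ and $\Phi_{p^j}(X) \mid X^{p^{e-1}}-1$ for $j \leqslant e-1$. Hence each of the $p$ blocks contributes the same $p^{e-1}\times p^{e-1}$ matrix $A_{p^{e-1}}$ in the rows indexed by $\bigoplus_{j<e}\ZZ[X]/\Phi_{p^j}(X)$ (with respect to its standard basis), giving precisely the first block-row of \eqref{eq:A-p-e}.

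For the lower block I would use the explicit formula \eqref{eq:prime-power-cyclo}: in $\ZZ[X]/\Phi_{p^e}(X)$ we have $X^{p^{e-1}} + X^{2p^{e-1}} + \dots + X^{(p-1)p^{e-1}} = -1$, and the relation $\Phi_{p^e}(X)=0$ is the \emph{only} relation of degree $< p^{e-1}(p-1)$, so the $p^{e-1}(p-1)$ monomials $\overline{X}^{kp^{e-1}+r}$ with $1 \leqslant k \leqslant p-1$, $0 \leqslant r \leqslant p^{e-1}-1$ are exactly the standard basis. Therefore, for $0\leqslant k\leqslant p-2$, the basis vector $\overline{X}^{kp^{e-1}+r}$ maps to the basis vector of $\ZZ[X]/\Phi_{p^e}(X)$ indexed by $(k,r)$ (contributing an $I_{p^{e-1}}$ block in block-column $k$), while the last block $\overline{X}^{(p-1)p^{e-1}+r}$ maps to $-\,\overline{X}^r - \overline{X}^{p^{e-1}+r} - \dots - \overline{X}^{(p-2)p^{e-1}+r}$ (contributing $-I_{p^{e-1}}$ in every block-column). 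Matching indices with the row order on $\ZZ[X]/\Phi_{p^e}(X)$ yields exactly the lower block of \eqref{eq:A-p-e}. The base case $A_{p^0}=A_1=(1)$ is immediate since $X^1-1 = \Phi_1$.

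The only genuinely delicate point is bookkeeping: one must fix a single compatible ordering of the monomial basis of $\ZZ[X]/(X^{p^e}-1)$ and of the direct-sum basis of the codomain — grouping the codomain as $\big(\bigoplus_{j<e}\ZZ[X]/\Phi_{p^j}(X)\big)\oplus \ZZ[X]/\Phi_{p^e}(X)$ and, within the latter, ordering $\overline{X}^{kp^{e-1}+r}$ lexicographically in $(k,r)$ — so that the two descriptions above literally assemble into the displayed block matrix. Once the indexing convention is pinned down, everything else is a direct substitution using $\Phi_{p^j}\mid X^{p^{e-1}}-1$ for $j<e$ and the single relation \eqref{eq:prime-power-cyclo}; I would state the ordering convention explicitly at the start of the proof and then verify the two block-rows as above.
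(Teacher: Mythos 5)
Your proposal is correct and follows essentially the same route as the paper's proof: the top block row comes from $X^{p^{e-1}}\equiv 1 \bmod{\Phi_{p^j}(X)}$ for $0 \leqslant j \leqslant e-1$ (equivalently, $\Phi_{p^j}\mid X^{p^{e-1}}-1$), the identity blocks from the fact that monomials of degree less than $\phi(p^e)$ are their own remainders modulo $\Phi_{p^e}$, and the $-I_{p^{e-1}}$ blocks from the relation coming from \eqref{eq:prime-power-cyclo}. Only note the small slip in indexing: the standard basis of $\ZZ[X]/\Phi_{p^e}(X)$ corresponds to exponents $kp^{e-1}+r$ with $0 \leqslant k \leqslant p-2$ (not $1 \leqslant k \leqslant p-1$), which is the range you in fact use when assembling the blocks.
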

\begin{proof}
  Let $A_{p^e} = (B_{ij})_{1 \leqslant i, j \leqslant p}$ where $B_{ij}$ are matrices of size $p^{e-1} \times p^{e-1}$. Since $X^{(k-1)p^{e-1}+i} \equiv X^i \bmod{\Phi_{p^j}(X)}$ when $0 \leqslant j \leqslant e-1$, $0 \leqslant i \leqslant p^{e-1}$ and $1 \leqslant k \leqslant p$, it follows that $B_{1k} = A_{p^{e-1}}$. Also, $X^i$ is itself the remainder on division by $\Phi_{p^e}(X)$ when $0 \leqslant i \leqslant \phi(p^{e}) - 1 = p^e - p^{e-1} - 1$. This shows that $(B_{ij})_{\substack{2 \leqslant i \leqslant p \\ 1 \leqslant j \leqslant p-1}}$ is an identity matrix. Finally, using 
  \[X^{p^{e-1}(p-1) + i} \equiv -X^{p^{e-1}(p-2)+i} - X^{p^{e-1}(p-3)+i} - \dots - X^{p^{e-1} + i}-X^i \bmod{\Phi_{p^e}(X)},\]
  we see that $B_{ip} = -I$ for $2 \leqslant i \leqslant p$. This completes the proof.    
\end{proof}
\begin{thm}
  \label{thm:primary-smith}
  For $e > 0$, the distinct elementary divisors of $A_{p^e}$ are $\{p^i : 0 \leqslant i \leqslant e\}$. The multiplicity of $p^i$ is $\phi(p^{e-i})$. 
\end{thm}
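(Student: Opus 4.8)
The plan is to induct on $e$, exploiting the recursive block form of $A_{p^e}$ given by Lemma~\ref{lem:recursion-for-prime-powers}. Write $A := A_{p^{e-1}}$, a square matrix of size $p^{e-1}$, so that $A_{p^e}$ is the $p \times p$ array of blocks displayed in \eqref{eq:A-p-e}. I would first show, by explicit unimodular row and column operations, that $A_{p^e}$ is equivalent over $\ZZ$ to the block-diagonal matrix $I_{(p-1)p^{e-1}} \oplus (pA)$. Granting this, the identity $S(cM) = c\, S(M)$ (scaling preserves the divisibility chain) together with the fact that adjoining identity blocks ahead of a matrix already in Smith normal form keeps it in Smith normal form (since $1$ divides every elementary divisor) yields
\[
S(A_{p^e}) \;=\; I_{(p-1)p^{e-1}} \;\oplus\; p\, S(A_{p^{e-1}}),
\]
and the theorem then drops out of the induction.

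For the reduction, first replace the last block column of \eqref{eq:A-p-e} by the sum of all $p$ block columns. In each block row $k$ with $2 \le k \le p$, which reads $(0, \dots, I, \dots, 0, -I)$ with the leading $I$ in block position $k-1$, the last block becomes $I + (-I) = 0$, whereas in the first block row it becomes $A + \cdots + A = pA$. Next, for $j = 1, \dots, p-1$ in turn, subtract $A$ times block row $j+1$ (which now reads $(0, \dots, I, \dots, 0, 0)$ with $I$ in block position $j$) from the first block row; this replaces the $(1,j)$ block by $A - A = 0$ and leaves the remaining blocks of the first row untouched. After these operations the first block row is $(0, \dots, 0, pA)$ while block rows $2, \dots, p$ carry an identity block in positions $(k, k-1)$ for $2 \le k \le p$; a cyclic permutation of the block rows, which is unimodular, brings the matrix to $I_{(p-1)p^{e-1}} \oplus (pA)$, as claimed.

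It remains to run the induction. The base case $e = 1$ follows from the reduction applied with $A_{p^0} = A_1 = (1)$: it gives $A_p$ equivalent to $I_{p-1} \oplus (p)$, so $1$ is an elementary divisor with multiplicity $p - 1 = \phi(p)$ and $p$ with multiplicity $1 = \phi(1)$. Assuming the assertion for $e - 1$, the displayed formula sends each elementary divisor $p^i$ of $A_{p^{e-1}}$ (which by hypothesis occurs with multiplicity $\phi(p^{e-1-i})$, $0 \le i \le e-1$) to $p^{i+1}$, and adjoins a further $(p-1)p^{e-1} = \phi(p^e)$ copies of $1 = p^0$; reindexing, $p^i$ occurs with multiplicity $\phi(p^{e-i})$ for $0 \le i \le e$, and since $\phi(p^{e-i}) \ge 1$ throughout, these are precisely the distinct elementary divisors. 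The one thing requiring care is the verification that the manipulations in the previous paragraph are genuine elementary integral operations and that the blocks are tracked correctly; beyond that bookkeeping there is no real obstacle, the only structural inputs being $S(cM) = c\, S(M)$ and the behaviour of Smith normal form under prepending identity blocks.
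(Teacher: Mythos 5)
Your proof is correct and follows essentially the same route as the paper: it uses the recursive block form \eqref{eq:A-p-e}, adds all block columns to the last one, clears the first block row with the identity blocks, permutes block rows, and concludes $S(A_{p^e}) = I_{(p-1)p^{e-1}} \oplus p\,S(A_{p^{e-1}})$ by induction. The only (harmless) deviation is that you handle the base case $e=1$ by the same reduction instead of the paper's determinant argument $|\det(A_p)| = p$.
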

For $n \times n$ matrices $L$ and $M$, let us write $L \sim M$ to mean that $L$ and $M$ are Smith equivalent:
that is, $L \sim M$ if and only if there are matrices $P, Q \in \GL_n(\ZZ)$ such that $M = PLQ$. 

\begin{proof}
  We prove this by induction on $e$. \newline 

  {\bfseries The case $e = 1$.} $A_p$ is a $p \times p$ matrix of the form: 
  \[
  A_p = \left(
    \begin{matrix}
      1 & 1 & \dots & 1 & 1 \\
      1 & 0 & \dots & 0 & -1 \\
      0 & 1 & \dots & 0 & -1 \\
      \vdots & & \ddots & &\vdots \\
      0 & 0 & \dots & 1 & -1
    \end{matrix}
  \right)
  \]
  Adding all the columns to the rightmost column, we get the matrix:
  \begin{align}\label{eq:mat-Bp}
  A_p \sim B_p := \left(
    \begin{matrix}
      1 & 1 & \dots & 1 & p \\
      1 & 0 & \dots & 0 & 0 \\
      0 & 1 & \dots & 0 & 0 \\
      \vdots & & \ddots & &\vdots \\
      0 & 0 & \dots & 1 & 0
    \end{matrix}
  \right)
  \end{align}
  Therefore, we have, $|\det(A_p)| = p$, from which the theorem follows.
  \newline 

  {\bfseries The Induction Step.} Consider the matrix \eqref{eq:A-p-e}. Proceeding analogous to the $e=1$ case, we note that $A_{p^e}$ is Smith equivalent to the matrix
  \begin{align}\label{eq:mat-Bp-e}
  A_{p^e} \sim B_{p^e} := \left(
    \begin{matrix}
      A_{p^{e-1}} & A_{p^{e-1}} & \dots & A_{p^{e-1}} & pA_{p^{e-1}} \\
      I_{p^{e-1}} & 0 & \dots & 0 & 0 \\
      0 & I_{p^{e-1}} & \dots & 0 & 0 \\
      \vdots & & \ddots & &\vdots \\
      0 & 0 & \dots & I_{p^{e-1}} & 0
    \end{matrix}
  \right)
  \end{align}
  Now, performing row operations, we may obtain the following matrix, Smith equivalent to $A_{p^e}$:
  \[
  \left(
    \begin{matrix}
      0 & 0 & \dots & 0 & pA_{p^{e-1}} \\
      I_{p^{e-1}} & 0 & \dots & 0 & 0 \\
      0 & I_{p^{e-1}} & \dots & 0 & 0 \\
      \vdots & & \ddots & &\vdots \\
      0 & 0 & \dots & I_{p^{e-1}} & 0
    \end{matrix}
  \right)
  \] 
  We now interchange rows to obtain the following form: 
  \[
  \left(
    \begin{matrix}
      I_{p^{e-1}} & 0 & \dots & 0 & 0 \\
      0 & I_{p^{e-1}} & \dots & 0 & 0 \\
      \vdots & & \ddots & &\vdots \\
      0 & 0 & \dots & I_{p^{e-1}} & 0 \\
      0 & 0 & \dots & 0 & pA_{p^{e-1}}
    \end{matrix}
  \right)
 \quad \sim \quad
  \left(
    \begin{matrix}
      I_{p^{e-1}} & 0 & \dots & 0 & 0 \\
      0 & I_{p^{e-1}} & \dots & 0 & 0 \\
      \vdots & & \ddots & &\vdots \\
      0 & 0 & \dots & I_{p^{e-1}} & 0 \\
      0 & 0 & \dots & 0 & pS(A_{p^{e-1}})
    \end{matrix}
  \right)
  \]
  Since this matrix is in Smith normal form, it must be the Smith normal form of the matrix $A_{p^e}$. Now, we verify the assertions of the theorem: indeed, the elementary divisors of $A_{p^e}$ are $\{p^i: 0 \leqslant i \leqslant e\}$; the multiplicity of $p^{i+1}$ is $\phi(p^{e-1-i})$ for $0 \leqslant i \leqslant e-1$ (from the induction hypothesis) and $1$ appears $p^{e-1}(p-1) = \phi(p^e)$ times. This completes the proof.       
\end{proof}
\begin{rem}\label{rem:remarks-about-T_p_e}
  We may actually calculate the determinant of $A_{p^e}$ for $e > 0$ from the proof of Theorem~\ref{thm:primary-smith}.  Let $I(p,k)$ be the column block matrix\[I(p,k) = \left(\begin{matrix} I_{p^{k-1}} \\\vdots \\I_{p^{k-1}}\end{matrix}\right)\] 
of $p-1$ blocks. Consider the matrix $T_{p^e}$ defined as follows: 
\begin{equation}
\label{eq:T_p_e}
T_{p^e} = \begin{pmatrix}
I_{p^e-p^{e-1}} & I(p,e) \\
0      & I_{p^{e-1}}
\end{pmatrix}.
\end{equation}
Then, it is an easy computation to see that $A_{p^e} T_{p^e} = B_{p^e}$ (see \eqref{eq:mat-Bp-e}). Since $\det(T_{p^e}) = 1$ for all $p$ and $e$, we see that 
\begin{align}
\label{eq:det-recur}
\det(A_{p^e}) = \det(B_{p^e}) = (-1)^{\phi(p^e)}p^{p^{e-1}}\det(A_{p^{e-1}}).
\end{align}
This gives us a recursive formula for the determinant of $A_{p^e}$ (indeed, we know $\det(A_1) = \det((1)) = 1$). It now follows that 
\begin{equation}
\label{eq:det-formula}
\det(A_{p^e}) = (-1)^{p^e-1} p^{\sum_{k=0}^{e-1}p^{k}}. 
\end{equation}
Thus, for $e > 0$, we have that $\det(A_{p^e})$ is positive for all odd primes $p$ and negative for $p=2$.  
\end{rem}
\begin{rem}
\label{rem:totality-of-col-W}
Denote the totality of column (resp. row) operations needed to bring $A_{p^e}$ to its Smith
normal form by $V_{p^e}$ (resp. $U_{p^e}$) so that $U_{p^e}$ and $V_{p^e}$
satisfy the following:
\begin{equation}
  \label{eq:SNF-eq}
  U_{p^e} A_{p^e} V_{p^e} = S(A_{p^e}). 
\end{equation}
The matrix $V_{p^e}$ can be read off from the proof of the last
proposition to be:
\begin{equation}
  V_{p^e} = 
\begin{pmatrix}
I_{p^e - p^{e-1}} & I(p,e)V_{p^{e-1}} \\
0 & V_{p^{e-1}}
\end{pmatrix} 
= 
\begin{pmatrix}
I_{p^{e-1}} &                &              &               & V_{p^{e-1}} \\
              & I_{p^{e-1}}  &             &                & V_{p^{e-1}} \\
              &                & \ddots &                 & \vdots\\
              &                &             & I_{p^{e-1}}   & V_{p^{e-1}}\\
              &                &             &                   &V_{p^{e-1}}
\end{pmatrix}.\label{eq:V_p_e}
\end{equation}
For later purposes, we note that the following equation
sets up a recursion for the matrix $W_{p^e} :=
A_{p^e}V_{p^e}$ with $W_1 = (1)$:
\begin{equation}
  \label{eq:W_p_e}
  W_{p^e} = 
\begin{pmatrix} 
A_{p^{e-1}} & \cdots & A_{p^{e-1}} & pW_{p^{e-1}} \\
I_{p^{e-1}}  &             &                 &        0      \\
                &\ddots &                  &  \vdots   \\
                &            & I_{p^{e-1}}   &        0       
\end{pmatrix}.
\end{equation}
See Lemma~\ref{lem:smith-vectors-are-columns-of-AV} for an
interpretation of the columns of $W_{p^e}$.
\end{rem}
% \begin{cor}
%   \label{cor:primary-smith-will-do}
%    Suppose that 
%   \[n = p_1^{\alpha_1} \dots p_r^{\alpha_r}\]
%   is the factorisation of a positive integer $n$. The Smith normal form $S(A_n)$ of the matrix $A_n$ is the Smith normal form of the Kronecker product
%   \begin{equation}
%   S(A_n) = S\left(\bigotimes_{i=1}^r S(A_{p_i^{\alpha_i}})\right).
%   \end{equation}
% \end{cor}
Let $A$ be an $n \times n$ matrix and $B$ be an $m \times m$ matrix,
then, we have: \[\det(A \otimes B) = (\det(A))^m(\det(B))^n.\] By Theorem~\ref{thm:smith-and-coprime} and \eqref{eq:det-formula} we have the following:
\begin{cor}
\label{cor:order-of-coker}
If $n = p_1^{\alpha_1} \dots p_r^{\alpha_r}$, then
\begin{align}
|\det(A_n)| &= \prod_{i=1}^r p_i^{\frac {n(1-p_i^{-\alpha_i})}
  {(p_i-1)}}
\end{align}
\end{cor}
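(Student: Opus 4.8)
The plan is to reduce everything to the prime-power case already settled in \eqref{eq:det-formula} and then invoke multiplicativity of the determinant under Kronecker products. Write $n = p_1^{\alpha_1} \cdots p_r^{\alpha_r}$ and put $n_i = p_i^{\alpha_i}$, so that $n_1, \dots, n_r$ are pairwise coprime with $\prod_i n_i = n$. Iterating Theorem~\ref{thm:smith-and-coprime} over these factors gives $S(A_n) = S(A_{n_1} \otimes \cdots \otimes A_{n_r})$. Since the product of the (non-negative) elementary divisors of an integer matrix $A$ equals $|\det(A)|$ (as noted in the footnote to the definition of $S(A)$), passing to determinants yields $|\det(A_n)| = |\det(A_{n_1} \otimes \cdots \otimes A_{n_r})|$.

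Next I would apply the Kronecker determinant identity $\det(A \otimes B) = (\det A)^b (\det B)^a$ for $A$ of size $a \times a$ and $B$ of size $b \times b$, used in the excerpt just before the corollary. Applying it repeatedly to the product of the $r$ factors $A_{n_i}$ (each of size $n_i \times n_i$, with complementary Kronecker factor of size $n/n_i$) gives
\[
|\det(A_{n_1} \otimes \cdots \otimes A_{n_r})| = \prod_{i=1}^r |\det(A_{n_i})|^{\,n/n_i}.
\]
The only thing requiring a moment's care is the exponent bookkeeping: in a product of $r$ Kronecker factors, $A_{n_i}$ ends up raised to the product of the sizes of all the other factors, which is precisely $n/n_i$, and no other contribution appears.

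Finally I would substitute the prime-power formula \eqref{eq:det-formula}, namely $|\det(A_{p_i^{\alpha_i}})| = p_i^{\sum_{k=0}^{\alpha_i-1} p_i^{k}} = p_i^{(p_i^{\alpha_i}-1)/(p_i-1)}$, to conclude that the exponent of $p_i$ in $|\det(A_n)|$ is
\[
\frac{n}{p_i^{\alpha_i}} \cdot \frac{p_i^{\alpha_i}-1}{p_i-1} = \frac{n\,(1 - p_i^{-\alpha_i})}{p_i-1},
\]
which is exactly the asserted value. There is no genuine obstacle here: the corollary is a direct assembly of Theorem~\ref{thm:smith-and-coprime}, the Kronecker determinant identity, and \eqref{eq:det-formula}, the sole delicate point being the verification that iterating the two-factor statements over the $r$ prime-power components produces precisely the exponents $n/n_i$. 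As an alternative to routing through the Smith normal form, one could observe directly from the commutative square in the proof of Theorem~\ref{thm:smith-and-coprime} that $P_{m,n}$ (a permutation matrix, by Lemma~\ref{lem:iso-of-grp-algebras}) and $T(m,n)$ are isomorphisms of free $\ZZ$-modules, hence of determinant $\pm 1$, forcing $\det(A_{mn}) = \pm \det(A_m \otimes A_n)$ for coprime $m, n$, and then proceed as above.
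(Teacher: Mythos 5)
Your argument is correct and is essentially the paper's own route: the corollary is stated there as an immediate consequence of Theorem~\ref{thm:smith-and-coprime}, the Kronecker determinant identity $\det(A\otimes B)=(\det A)^m(\det B)^n$, and the prime-power formula \eqref{eq:det-formula}, which is exactly the assembly you carry out. Your exponent bookkeeping $n/n_i$ and the simplification to $n(1-p_i^{-\alpha_i})/(p_i-1)$ are both accurate, so nothing further is needed.
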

\begin{rem}
One may cast the expression for $|\det(A_n)|$ in many different
forms. For example, by comparing the exponent of primes in both sides,
one may prove:
\begin{equation}
  \label{eq:det-many-avatars-1}
  |\det(A_n)| = \prod_{k=1}^n \gcd(k, n).
\end{equation}
In turn, this yields several nice expressions for the determinant:
\begin{equation}
  \label{eq:det-many-avatars-2}
  g(n) := \prod_{k=1}^n \gcd(k, n) = \prod_{d \mid n}
  d^{\phi\left(\frac{n}{d}\right)} = n^n \prod_{d \mid n} \frac{1}{d^{\phi(d)}}.
\end{equation}
The arithmetic properties of the function $g$ have been studied in
\cite{gcd-fn}. The author begins by observing that for a
multiplicative function $h$, the function 
\[g(n;h) := \prod_{k=1}^{n} h(\gcd(k,n))\]
satisfies a curious relationship for relatively prime positive
integers $m$ and $n$: 
\begin{equation}
  \label{eq:multiplicativity}
  g(mn;h) = g(m;h)^n g(n;h)^m.
\end{equation}
and concludes that $g(n;h)^{1/n}$ is multiplicative. However, it is now clear that underlying this curiosity is the
Kronecker product (Theorem~\ref{thm:smith-and-coprime}). 
It is shown that the Dirichlet series 
\[\sum_{n=1}^\infty \frac{\log(g(n))}{n^s}\]
converges absolutely for $\Re(s) > 2$ and equals
$-\frac{\zeta(s-1)\zeta'(s)}{\zeta(s)}$ where $\zeta(s)$ is the
Riemann's zeta function. More intricate connections between the
function $g(n)$ and the Riemann's zeta function are established (see
Corollary 4, loc.\@ cit.). It is also shown that, 
\begin{equation*}
 \text{max}\big(n^{n/v(n)}, n^{\tau(n)/(2n)}\big) \leqslant g(n) \leqslant 27 \left(\frac{\log(n)}{\omega(n)}\right)^{n\omega(n)}
\end{equation*}
where $\tau(n)$ is the number of divisors of $n$, $v(n)$ is the largest prime power divisor of $n$ and
$\omega(n)$ is the number of distinct prime factors of $n$. 
\end{rem}
Calculating the sign of this determinant turns out to be quite
tricky. We will take a different approach (see
Appendix~\ref{sec:det-of-an}) to calculate the determinant
which will also tell us the sign of $\det(A_n)$. 
\subsection{Smith Normal Form of a Diagonal Matrix} 
The next order of business is to work out the Smith normal form of a diagonal matrix, $D$: 
\[
D = \begin{pmatrix}
  a_1 \\
  & a_2 \\
  & & \ddots \\
  & & & a_n
\end{pmatrix}
\]
Notice that we may first permute the rows of $D$ so that the zero
rows of the matrix are the last few rows of $D$. If $D^\circ$ denotes
the maximum principal submatrix of $D$ whose rows are all non-zero, the Smith normal form of $D$ is, simply: 
\[
S(D) = \begin{pmatrix}
  S(D^\circ) \\
  &  0 \hphantom{D}
\end{pmatrix}
\]
Thus, we may assume that $\{a_1, \dots, a_n\}$ are all non-zero. \newline

\begin{algo}
\label{algo:diag}
Let $D = \diag(a_1, \dots, a_n)$
be a diagonal matrix with $a_i \neq 0$ for all $i$. Let $\mathcal{P}$ be the set of primes that divides at least one of the $a_i$'s. The algorithm proceeds in two steps:  
\begin{enumerate}
\item Corresponding to a prime $p_j \in \mathcal{P}$, we may associate the partition $\lambda^{(j)}$ obtained by rearranging the sequence of numbers $(\gamma_1, \dots, \gamma_i, \dots, \gamma_n)$ in weakly decreasing order, where $p_j^{\gamma_i} \parallel a_i$. Indeed, a partition associated to a prime this way has atmost $n$ non-zero parts. 
\item The elementary divisors of the matrix $D$ are now given by the formulae: 
  \begin{align*}
    e_k = \prod_{j=1}^r p_j^{\lambda^{(j)}_{n-k+1}}
  \end{align*}
  The fact that $\lambda^{(j)}$ is a sequence of weakly decreasing non-negative integers shows that 
  \[e_1 \mid \dots \mid e_n.\]
\end{enumerate}
\end{algo}
We shall find it convenient to develop a pictorial language for the algorithm. The partitions naturally suggest Young diagrams:
\begin{defn}
  [Young Diagram] 
  The Young diagram associated to a partition $\lambda = (\lambda_1, \dots, \lambda_l)$ is a left-aligned array of boxes with the $i$th row of the array containing $\lambda_i$ boxes. 
\end{defn}
For example, the Young diagram of the partition $\nu = (2, 2, 1)$ is
Figure~\ref{fig:eg-of-a-yd}. 
\begin{figure}[htbp]
  \centering
      \begin{tikzpicture}
        \matrix[matrix of nodes,nodes in empty cells, execute at empty cell = \node{\phantom{2}};] (m){
          \phantom{2}& \phantom{2}& \phantom{2}&\phantom{2}\\
          \phantom{2}& \phantom{2}& \phantom{2}&\phantom{2}\\
          \phantom{2}& \phantom{2}& \phantom{2}\\
          };
          % The Horizontals
          \draw (m-1-1.north west) -- (m-1-2.north east);
          \draw (m-2-1.north west) -- (m-2-2.north east);
          \draw (m-2-1.south west) -- (m-2-2.south east);
          \draw (m-3-1.south west) -- (m-3-1.south east);

          % The Verticals 
          \draw (m-1-1.north west) -- (m-3-1.south west);
          \draw (m-1-2.north west) -- (m-3-1.south east);
          \draw (m-1-3.north west) -- (m-2-2.south east);
      \end{tikzpicture}
      \caption{Young Diagram of $(2, 2, 1)$}
      \label{fig:eg-of-a-yd}
\end{figure}
Notice that by definition, the Young diagrams of the partitions $\nu$, that of $(\nu, 0)$, $(\nu, 0, 0)$ etc.\@ are all the same.% Now, in terms of Young diagrams, we may visualise the algorithm as: 
%\pagebreak
\begin{eg}\label{egs:after-algorithm}\mbox{}
  %\begin{enumerate}[ref=\ref{egs:after-algorithm}~(\arabic*)]
  %\item 
    Consider the diagonal matrix 
    \[D = \begin{pmatrix}
      6 \\
      & 4 \\
      & & 7\\
      & & & 12
    \end{pmatrix}\]
    The set $\mathcal{P}$ is therefore $\{2, 3, 7\}$. A simple calculation shows that the associated partitions are
    \begin{align*}
      2 &\leftrightarrow (2, 2, 1, 0)\\
      3 &\leftrightarrow (1, 1, 0, 0)\\
      7 &\leftrightarrow (1, 0, 0, 0)
    \end{align*}
    Therefore, the elementary divisors are: 
    \begin{align*}
      e_1 &= 2^03^07^0 = 1\\
      e_2 &= 2^13^07^0 = 2\\
      e_3 &= 2^23^17^0 = 12\\
      e_4 &= 2^23^17^1 = 84
    \end{align*}
    It may be helpful
    to draw the Young diagrams (and this will play a crucial role as
    we proceed!) on a ruled sheet of paper, see Figure~\ref{fig:ele-div-of-diagonal}. 
    \begin{figure}[htbp]
      \centering
      \begin{tikzpicture}
        \matrix[matrix of nodes,nodes in empty cells] (m){
          \phantom{2}&2&2          &[3em]&3          &[3em]&7          &[1em]84          \\
          \phantom{2}&2&2          &[3em]&3          &[3em]&\phantom{7}&[1em]12          \\
          \phantom{2}&2&\phantom{2}&[3em]&\phantom{3}&[3em]&           &[1em]\phantom{2}2\\
          \phantom{2}& &           &[3em]&           &[3em]&           &[1em]\phantom{2}1\\
        };
        \draw (m-1-1.north west) -- (m-1-8.north east);
        \foreach \i in {1,2,3,4}{
          \draw (m-\i-1.south west) -- (m-\i-8.south east);
        };
        % 2 Tableau
        \draw (m-1-2.north west) -- (m-3-2.south west);
        \draw (m-1-3.north west) -- (m-3-3.south west);
        \draw (m-1-3.north east) -- (m-2-3.south east);

        % 3 Tableau
        \draw (m-1-5.north west) -- (m-2-5.south west);
        \draw (m-1-5.north east) -- (m-2-5.south east);

        % 7 Tableau
        \draw (m-1-7.north west) -- (m-1-7.south west);
        \draw (m-1-7.north east) -- (m-1-7.south east);
      \end{tikzpicture}
      \caption{The Elementary Divisors of $D$}
      \label{fig:ele-div-of-diagonal}
    \end{figure}
\end{eg}
\begin{thm}
  \label{thm:correctness-of-algo}
  Algorithm~\ref{algo:diag} indeed gives the Smith normal form of the
  diagonal matrix $D=\diag(a_1, \dots, a_n)$.  
\end{thm}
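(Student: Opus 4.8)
The plan is to compute the Smith normal form of $D$ via its \emph{determinantal divisors}. Recall the standard fact that for a nonzero $n \times n$ integer matrix $M$, if $\Delta_k(M)$ denotes the greatest common divisor of all $k \times k$ minors of $M$ (with the convention $\Delta_0(M) = 1$), then each $\Delta_k$ is unchanged under left and right multiplication by elements of $\GL_n(\ZZ)$, and the elementary divisors $d_1 \mid \dots \mid d_n$ of $M$ satisfy $d_k = \Delta_k(M)/\Delta_{k-1}(M)$. So it suffices to verify that $e_1 e_2 \cdots e_k = \Delta_k(D)$ for every $k$; the chain $e_1 \mid \dots \mid e_n$ needed to legitimately call the $e_k$'s elementary divisors is immediate, since $e_k \mid e_{k+1}$ amounts to $\lambda^{(j)}_{n-k+1} \leqslant \lambda^{(j)}_{n-k}$ for each $j$, which holds because each $\lambda^{(j)}$ is weakly decreasing.

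First I would pin down the $k \times k$ minors of the diagonal matrix $D$. Selecting rows indexed by $I$ and columns indexed by $J$ with $|I| = |J| = k$, the resulting $k \times k$ submatrix of $D$ has nonzero determinant only when $I = J$, in which case the minor equals $\prod_{i \in I} a_i$. Hence
\[
\Delta_k(D) \;=\; \gcd_{\substack{I \subseteq \{1,\dots,n\} \\ |I| = k}} \; \prod_{i \in I} a_i .
\]
Next, I would read off $p$-adic valuations: fixing $p = p_j \in \mathcal{P}$ and writing $p^{\gamma_i} \parallel a_i$, we have $v_p\!\bigl(\prod_{i \in I} a_i\bigr) = \sum_{i \in I} \gamma_i$, so $v_p(\Delta_k(D)) = \min_{|I| = k} \sum_{i \in I} \gamma_i$, which is exactly the sum of the $k$ smallest numbers among $\gamma_1, \dots, \gamma_n$. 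Sorting these into the weakly decreasing partition $\lambda^{(j)}$, the $k$ smallest entries are $\lambda^{(j)}_{n}, \lambda^{(j)}_{n-1}, \dots, \lambda^{(j)}_{n-k+1}$, giving
\[
v_{p_j}\bigl(\Delta_k(D)\bigr) \;=\; \sum_{i = n-k+1}^{n} \lambda^{(j)}_i .
\]

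Finally I would assemble the answer. Since any prime outside $\mathcal{P}$ divides no $a_i$, it contributes nothing to any $\Delta_k(D)$; and for $p_j \in \mathcal{P}$, taking the difference of consecutive determinantal divisors yields
\[
v_{p_j}(d_k) \;=\; v_{p_j}\bigl(\Delta_k(D)\bigr) - v_{p_j}\bigl(\Delta_{k-1}(D)\bigr) \;=\; \lambda^{(j)}_{n-k+1}.
\]
Therefore $d_k = \prod_{j=1}^{r} p_j^{\lambda^{(j)}_{n-k+1}} = e_k$, which is precisely the output of Algorithm~\ref{algo:diag}, establishing the theorem. The argument is essentially bookkeeping; the one place that genuinely demands care is the order reversal, namely that the $k$-th determinantal divisor is controlled by the $k$ \emph{smallest} valuations, which sit at the \emph{bottom} of the weakly decreasing partition and thus produce the index $n-k+1$. (One could instead localise at each prime $p$, observe that over $\ZZ_{(p)}$ the matrix $D$ is equivalent to a permutation of $\diag(p^{\gamma_1}, \dots, p^{\gamma_n})$, and recombine using the fact that the Smith form is determined prime by prime; but the determinantal-divisor route keeps everything over $\ZZ$ and sidesteps justifying that recombination.)
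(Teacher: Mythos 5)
Your proposal is correct and follows essentially the same route as the paper: both arguments rest on the characterisation of the elementary divisors through the gcds of the $k \times k$ minors (determinantal divisors), reduce the verification to $p$-adic valuations, and use the observation that the minimum of $\sum_{i \in I} \gamma_i$ over $k$-subsets $I$ is the sum of the $k$ smallest exponents, i.e.\ the last $k$ entries $\lambda^{(j)}_{n-k+1}, \dots, \lambda^{(j)}_n$ of the weakly decreasing partition. Your explicit identification of the nonzero minors of $D$ (only $I = J$ contributes) and the quotient formula $d_k = \Delta_k(D)/\Delta_{k-1}(D)$ are just slightly more spelled-out versions of what the paper does via its citation and backward induction on $k$.
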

\begin{proof}
 From \cite[Theorem~3.9]{BA1}, we have that a sequence $f_1, \dots,
 f_n$ satisfies 
  \begin{equation}
    \label{eq:need-to-prove}
  \prod_{j=1}^k f_j = \text{gcd of } k \times k \text{ minors of } D
  \end{equation}
  for every index $k = 1, \dots, n$ if and only if $f_1, \dots, f_n$ are the elementary
  divisors of $D$ upto units. 
  
  Let $e_1, \dots, e_n$ be the output of the algorithm. We will prove \eqref{eq:need-to-prove} with $f_j = e_j$ by backward induction on
  $k$. The case when $k = n$ is clear. Note that it suffices to prove that primes and the exponents to
  which they occur on either sides of \eqref{eq:need-to-prove} are
  equal.

  Let $\mathcal{P}=\{p_1, \dots, p_r\}$ be the set of all primes dividing atleast one of the
  $a_i$'s.  Let us set $a_k = \prod_{j=1}^r p_j^{e_{kj}}$. 
  
  Since $\gcd(\prod_j p_j^{r_j}, \prod_j p_j^{s_j}) = \prod_j
  p_j^{\min(r_j, s_j)}$, it suffices to verify the following equality for every $j$ ($1
  \leqslant j \leqslant r$):
  \begin{align*}
    \min\Big\{\sum_{i\in I} e_{ij} : I &\subseteq \{1, \dots, n\}, |I| = k\Big\} -
    \lambda^{(j)}_{n-k+1}\\
    &= \min\Big\{\sum_{i \in I} e_{ij} : I \subseteq \{1, \dots, n\}, |I| = k-1\Big\}
  \end{align*}
  for $1 \leqslant k \leqslant n$ where the notation $\lambda^{(j)}_{n-k+1}$
  is as in Algorithm~\ref{algo:diag}. But, this follows since
  $\min\Big\{\sum_{i\in I} e_{ij} : I \subseteq \{1, \dots, n\}, |I|
  = k\Big\}$ equals the sum of the first $k$ elements when, for a
  fixed $j$, the exponents $e_{ij}$'s
  are written in ascending order. That is, 
\[\min\Big\{\sum_{i\in I} e_{ij} : I \subseteq \{1, \dots, n\}, |I|
  = k\Big\} = \sum_{p=0}^{k-1} \lambda^{(j)}_{n-p}\] for every $1 \leqslant k \leqslant n$.
\end{proof}
\begin{fact}
Let  $G$ be a finite abelian group. The elementary divisors are easily computed from the primary decomposition by step 2 of the algorithm. Conversely, given its elementary divisors, the primary decomposition is the set of pairs $(p_j, \lambda^{(j)})$ obtained from step 1 of the algorithm, with these elementary divisors as the entries of a diagonal matrix. 
\end{fact}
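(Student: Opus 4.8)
The plan is to recognise this statement as the standard dictionary between the two normal forms of a finite abelian group --- its primary (prime-power cyclic) decomposition and its invariant-factor decomposition --- and to read it off from Theorem~\ref{thm:correctness-of-algo} by presenting $G$ as the cokernel of a diagonal integer matrix. Throughout one uses the structure theorem for finite abelian groups, crucially its \emph{uniqueness} clauses for both decompositions.

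For the first assertion, write the primary decomposition as $G \cong \bigoplus_{j=1}^{r}\bigoplus_{i} \ZZ/p_j^{\gamma_{ij}}\ZZ$ with the $p_j$ distinct primes, let $N$ be the total number of cyclic summands (listing trivial summands $\ZZ/p_j^{0}\ZZ$ if a uniform index set is wanted), and place the prime powers $p_j^{\gamma_{ij}}$ on the diagonal of an $N\times N$ matrix $D$, so that $\ZZ^{N}/D\ZZ^{N}\cong G$. The diagonal entries $e_1\mid\dots\mid e_N$ of $S(D)$ are, by definition, the elementary divisors of $G$, and Theorem~\ref{thm:correctness-of-algo} says exactly that they are the numbers output by step~2 of Algorithm~\ref{algo:diag} when fed the partitions $\lambda^{(j)}$ produced by step~1 from $D$ --- and $\lambda^{(j)}$ is precisely the weakly decreasing rearrangement of $(\gamma_{1j},\dots,\gamma_{Nj})$, i.e.\@ the partition attached to $p_j$ by the given primary decomposition. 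This is the forward direction.

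For the converse, start from the elementary divisors $e_1\mid\dots\mid e_N$ of $G$, so that $G\cong\bigoplus_{k=1}^{N}\ZZ/e_k\ZZ$. The matrix $\diag(e_1,\dots,e_N)$ is already in Smith normal form, so step~1 returns, for each prime $p_j$ dividing some $e_k$, the partition $\lambda^{(j)}$ obtained by rearranging $(\gamma_{1j},\dots,\gamma_{Nj})$ in weakly decreasing order, where $p_j^{\gamma_{kj}}\parallel e_k$; since $e_k\mid e_{k+1}$ these exponents are already weakly increasing, so the rearrangement is merely a reversal. The Chinese Remainder Theorem gives $\ZZ/e_k\ZZ\cong\bigoplus_j \ZZ/p_j^{\gamma_{kj}}\ZZ$, whence the family $\{(p_j,\lambda^{(j)})\}$ enumerates exactly the prime-power orders of the cyclic summands of a primary decomposition of $G$; by uniqueness of the primary decomposition, this is \emph{the} primary decomposition. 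Put differently, steps~1 and~2 are mutually inverse bijections between chains $e_1\mid\dots\mid e_N$ and finite families $\{(p_j,\lambda^{(j)})\}$ of (prime, partition) pairs --- step~2 reconstructs $e_k=\prod_j p_j^{\lambda^{(j)}_{N-k+1}}$, step~1 reads the exponents back off --- and the structure theorem pins down both ends of this bijection as honest invariants of $G$.

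I do not expect a genuine obstacle: the substance is all in Theorem~\ref{thm:correctness-of-algo}, which is already proved, and what remains is bookkeeping. The only points worth stating explicitly are (i) fixing a common length $N$ for the partitions by padding with zeros, so that the index $N-k+1$ in step~2 makes sense and matches the size of the diagonal matrix used in the converse, and (ii) invoking the uniqueness parts of the structure theorem, which is what turns ``these numbers (resp.\@ these pairs) come from \emph{some} decomposition of $G$'' into ``they are \emph{the} invariant-factor (resp.\@ primary) data of $G$''. One could instead argue purely combinatorially that the two transformations are inverse and appeal to the structure theorem only once at the end; I would keep the presentation above, since it reuses Theorem~\ref{thm:correctness-of-algo} verbatim.
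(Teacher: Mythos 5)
Your argument is correct and follows exactly the route the paper intends: the Fact is stated without proof as an immediate consequence of Algorithm~\ref{algo:diag} and Theorem~\ref{thm:correctness-of-algo}, and your proposal supplies precisely that bookkeeping --- feed the prime powers of the primary decomposition (resp.\@ the elementary divisors) into a diagonal matrix, apply the correctness theorem, and invoke the uniqueness clauses of the structure theorem together with CRT for the converse. Nothing further is needed.
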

Before we can compute the elementary divisors of $A_n$, we need to compute the partitions and primes in the Kronecker product. To do this, we recall that for the matrix $S(A_{p^\alpha})$, the set $\mathcal{P}$ is singleton $\{p\}$ and the partition associated to $p$ is, 
\begin{align*}
  \label{eq:1}
  p &\leftrightarrow (\quad \dots, \underbrace{\alpha-i, \dots, \alpha-i}_{\phi(p^i) \text{ times}}, \dots \quad), &&0 \leqslant i \leqslant \alpha
\end{align*}
where $\alpha-i$ appears $\phi(p^{i})$ times, $0 \leqslant i \leqslant \alpha$ (see Theorem~\ref{thm:primary-smith}).

Now, in the Kronecker Product, $S(A_{p_1^{n_1}}) \otimes S(A_{p_2^{n_2}})$, the set $\mathcal{P}$ of primes is $\{p_1, p_2\}$ and the associated partitions are, 
\begin{align*}
  p_1 &\leftrightarrow (\quad\dots, \underbrace{n_1-i, \dots, n_1-i}_{\phi(p_1^i)p_2^{n_2} \text{ times}}, \dots  \quad), &&0 \leqslant i \leqslant n_1\\
  p_2 &\leftrightarrow (\quad\dots, \underbrace{n_2-j, \dots, n_2-j}_{\phi(p_2^j)p_1^{n_1} \text{ times}}, \dots  \quad), &&0 \leqslant j \leqslant n_2 
\end{align*}
The following is easily seen by induction:
\begin{thm}
  \label{thm:prime-set-and-partitions-inductive-step}
  Suppose that 
  \[n = p_1^{\alpha_1} \dots p_r^{\alpha_r}\]
  is the factorisation of a positive integer $n$. Then, the set $\mathcal{P}$ of primes for the diagonal matrix 
  \[\bigotimes_j S(A_{p_j^{\alpha_j}})\]
  is the set $\{p_1, \dots, p_r\}$ and the associated partitions are 
  \begin{align*}
    p_j &\leftrightarrow (\quad\dots, \underbrace{\alpha_j-i, \dots, \alpha_j-i}_{\phi(p_j^i)n/p_j^{\alpha_j} \text{ times}}, \dots  \quad), &&0 \leqslant i \leqslant \alpha_j.
  \end{align*}
for $j = 1, \dots, r$. 
\end{thm}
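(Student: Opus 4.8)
The plan is to induct on $r$, the number of distinct primes dividing $n$, using the elementary fact that the Kronecker product $\diag(d_1,\dots,d_s)\otimes\diag(e_1,\dots,e_t)$ equals the diagonal matrix $\diag(d_ie_j)$, whose diagonal consists of the $st$ pairwise products. For a prime $p$ and a nonzero integer $a$, write $\nu_p(a)$ for the exponent of $p$ in $a$; then the partition that Algorithm~\ref{algo:diag} associates to $p$ for a diagonal matrix $\diag(a_1,\dots,a_n)$ is just the multiset $\{\nu_p(a_1),\dots,\nu_p(a_n)\}$ written in weakly decreasing order, and $\nu_p(de)=\nu_p(d)+\nu_p(e)$. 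When $r=1$ we have $n=p_1^{\alpha_1}$ and $\bigotimes_jS(A_{p_j^{\alpha_j}})=S(A_{p_1^{\alpha_1}})$, whose diagonal entries are powers of the single prime $p_1$, while $n/p_1^{\alpha_1}=1$; by Theorem~\ref{thm:primary-smith} the elementary divisor $p_1^k$ occurs with multiplicity $\phi(p_1^{\alpha_1-k})$, so on reindexing $i=\alpha_1-k$ the exponent $\alpha_1-i$ occurs $\phi(p_1^i)$ times for $0\leqslant i\leqslant\alpha_1$, which is the asserted partition (with the asserted singleton prime set), the total being $\sum_{i=0}^{\alpha_1}\phi(p_1^i)=p_1^{\alpha_1}=n$ parts.

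For the inductive step, assume the statement for $m=p_1^{\alpha_1}\cdots p_{r-1}^{\alpha_{r-1}}$, and set $D=\bigotimes_{j=1}^{r-1}S(A_{p_j^{\alpha_j}})$ and $E=S(A_{p_r^{\alpha_r}})$, so the matrix attached to $n=m\,p_r^{\alpha_r}$ is $D\otimes E$. Here $D$ has $m$ diagonal entries, each a product of powers of $p_1,\dots,p_{r-1}$, and $E$ has $p_r^{\alpha_r}$ diagonal entries, each a power of $p_r$. Fix $j<r$: since $\nu_{p_j}(e)=0$ for every diagonal entry $e$ of $E$, one gets $\nu_{p_j}(de)=\nu_{p_j}(d)$, so the multiset of $p_j$-exponents of $D\otimes E$ is that of $D$ with every multiplicity scaled by the number $p_r^{\alpha_r}$ of available $e$'s; by the inductive hypothesis the value $\alpha_j-i$ then occurs $\phi(p_j^i)(m/p_j^{\alpha_j})\,p_r^{\alpha_r}=\phi(p_j^i)\,n/p_j^{\alpha_j}$ times. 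Symmetrically, $\nu_{p_r}(de)=\nu_{p_r}(e)$ because $\nu_{p_r}(d)=0$, so the multiset of $p_r$-exponents of $D\otimes E$ is that of $E$ with every multiplicity scaled by $m$, and the base case applied to $E$ gives that $\alpha_r-i$ occurs $\phi(p_r^i)\,m=\phi(p_r^i)\,n/p_r^{\alpha_r}$ times. A prime divides a diagonal entry of $D\otimes E$ iff it divides one of $D$ or one of $E$, so the prime set is $\{p_1,\dots,p_{r-1}\}\cup\{p_r\}$; sorting each multiset into weakly decreasing order produces precisely the partitions claimed.

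I expect no genuine obstacle — the content is purely combinatorial bookkeeping. The two points to handle with a little care are: recording that $D$ and $E$ are supported on disjoint sets of primes, which is exactly what kills the cross terms in the exponents and lets the multiplicities simply multiply under the Kronecker product; and the reindexing translating Theorem~\ref{thm:primary-smith}'s ``$p^k$ with multiplicity $\phi(p^{e-k})$'' into the shape ``exponent $e-i$ with multiplicity $\phi(p^i)$'' used in the partitions. As a sanity check one should also verify that the $p_j$-partition has $\sum_{i=0}^{\alpha_j}\phi(p_j^i)\,n/p_j^{\alpha_j}=n$ parts, matching the fact that $\bigotimes_jS(A_{p_j^{\alpha_j}})$ is an $n\times n$ matrix.
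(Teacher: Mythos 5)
Your proof is correct and follows essentially the same route as the paper, which states the prime-power case (Theorem~\ref{thm:primary-smith}), works out the two-factor Kronecker product explicitly, and then declares the general statement ``easily seen by induction'' --- your argument is exactly that induction, carried out carefully via the disjointness of the prime supports and the additivity of $p$-adic valuations on the diagonal entries of the Kronecker product.
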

Theorem~\ref{thm:prime-set-and-partitions-inductive-step} together
with Algorithm~\ref{algo:diag} completely solves the problem of determining the elementary factors of the matrix $A_n$. We illustrate this in an example: 
\begin{eg}
Let $n = 12 = 2^23^1$. Then, the set $\mathcal{P}$ of primes for the diagonal matrix $S(A_4) \otimes S(A_3)$ is $\{2, 3\}$. Associated partitions are: 
\begin{align*}
  2 &\leftrightarrow (2, 2, 2, 1, 1, 1)\\
  3 &\leftrightarrow (1, 1, 1, 1)
\end{align*}
Now, the formulae for the elementary divisors show that the elementary divisors of $A_{18}$ are $\{1, 1, 1, 1, 1, 1, 2, 2, 6, 12, 12, 12\}$. Pictorially, we have:
    \begin{figure}[htbp]
      \centering
      \begin{tikzpicture}
        \matrix[matrix of nodes,nodes in empty cells] (m){
          \phantom{2}&2          &2&[3em]&3          &[3em]&12        \\
          \phantom{2}&2          &2&[3em]&3          &[3em]&12          \\
          \phantom{2}&2          &2&[3em]&3           &[3em]&12\\
          \phantom{2}&2          & &[3em]&3           &[3em]&\phantom{1}6\\
          \phantom{2}&2          & &[3em]&           &[3em]&\phantom{1}2\\
          \phantom{2}&2         & &[3em]&\phantom{3}&[3em]&\phantom{1}2\\
          \phantom{2}&           &  &[3em]&           &[3em]&\phantom{1}1\\
          \phantom{$\displaystyle\frac12$}&\vdots&  &[3em]&          &[3em]&\phantom{$\displaystyle\frac12$}\vdots\\
          \phantom{2}&          & &[3em]&           &[3em]&\phantom{1}1\\
        };
        \draw (m-1-1.north west) -- (m-1-7.north east);
        \foreach \i in {1,...,9}{
          \draw (m-\i-1.south west) -- (m-\i-7.south east);
        };
        % 2 Tableau
        \draw (m-1-2.north west) -- (m-6-2.south west);
        \draw (m-1-2.north east) -- (m-6-2.south east);
       \draw (m-1-3.north east) -- (m-3-3.south east);
        % 3 Tableau
        \draw (m-1-5.north west) -- (m-4-5.south west);
        \draw (m-1-5.north east) -- (m-4-5.south east);
        %\draw (m-1-5.north east) -- (m-2-5.south east);
      \end{tikzpicture}
      \caption{The Elementary Divisors of $A_{12}$}
    \end{figure}
\end{eg}
\subsection{Consequences}\label{ssec:conseq-of-ele-div} 
To proceed further, we need some more notions related to a Young Diagram. The \emph{height} of a Young diagram $Y$ is the number of rows in $Y$. A cell in $Y$ is called a \emph{corner} if there is no cell to its right and there is no cell below it.

Suppose that a positive integer $n$ has the factorisation \[n = p_1^{\alpha_1} \dots p_r^{\alpha_r}\] with $\alpha_i > 0$ and $p_1 < \dots < p_r$. Let $E(A_n)$ denote the multiset of elementary divisors of $A_n$. 
\begin{pro}
\label{pro:some-ele-div-stat}
With notations as above, we have:
\begin{enumerate}
\item The multiplicity of $1$ in $E(A_n)$ is $n-\frac n {p_1}$.
  Equivalently, the rank of the cokernel of $\Psi_n$ is $\frac n{p_1}$.
\item The least integer $d > 1$ in $E(A_n)$ is $p_1$ and its multiplicity in $E(A_n)$ is 
  \begin{align*}
    \begin{cases} 
      \frac n {p_1} - \frac n {p_2},   & \text{if } p_2 < p_1^2 \\
      \frac n {p_1} - \frac n {p_1^2}, & \text{if } p_2 > p_1^2
    \end{cases}
  \end{align*}
\item The largest integer $m \in E(A_n)$ is $n$ and its multiplicity in $E(A_n)$ is \[\frac{n}{\max\left\{p_i^{\alpha_i}\mid i=1,\dots,r\right\}}.\]
\end{enumerate}
\end{pro}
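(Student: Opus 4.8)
\emph{Proof proposal.} The plan is to read the elementary divisors of $A_n$ off the interleaved partitions supplied by Theorem~\ref{thm:prime-set-and-partitions-inductive-step} and then reduce each of the three claims to a short counting problem. By Theorem~\ref{thm:smith-and-coprime}, applied inductively, the multiset $E(A_n)$ coincides with the multiset of elementary divisors of the diagonal matrix $\bigotimes_{j=1}^{r} S(A_{p_j^{\alpha_j}})$; hence, combining Theorem~\ref{thm:prime-set-and-partitions-inductive-step} with Algorithm~\ref{algo:diag}, if $e_1 \mid \cdots \mid e_n$ are the elementary divisors of $A_n$ then
\[ e_k = \prod_{j=1}^{r} p_j^{\lambda^{(j)}_{n-k+1}}, \]
where $\lambda^{(j)}$ is the partition in which the value $\alpha_j - i$ is repeated $\phi(p_j^i)\,n/p_j^{\alpha_j}$ times, for $0 \leqslant i \leqslant \alpha_j$. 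The first step is to record the elementary identity
\[ \lambda^{(j)}_m \geqslant t \iff m \leqslant n/p_j^t \qquad (0 \leqslant t \leqslant \alpha_j), \]
which follows by a telescoping sum: the number of parts of $\lambda^{(j)}$ that are $\geqslant \alpha_j - i$ equals $\sum_{l=0}^{i} \phi(p_j^l)\,n/p_j^{\alpha_j} = n/p_j^{\alpha_j - i}$, and one sets $t = \alpha_j - i$. In particular $\lambda^{(j)}_1 = \alpha_j$. With this identity in hand, each assertion becomes a matter of counting the integers $m \in \{1, \dots, n\}$ (equivalently the indices $k = n - m + 1$) for which the exponents $\lambda^{(j)}_m$ satisfy a prescribed set of inequalities.

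For (1), I would argue that $e_k = 1$ precisely when $\lambda^{(j)}_{n-k+1} = 0$ for every $j$, i.e.\ when $n - k + 1 > n/p_j$ for every $j$; since $n/p_1 = \max_j n/p_j$ (as $p_1 < \cdots < p_r$), this is the single condition $n - k + 1 > n/p_1$, so $1$ occurs $n - n/p_1$ times in $E(A_n)$ and exactly $n/p_1$ of the $e_k$ exceed $1$. As $G(n) \cong \bigoplus_k \ZZ/e_k\ZZ$ is finite, its minimal number of generators, that is, its rank, is $n/p_1$. For (3), the divisibility chain $e_1 \mid \cdots \mid e_n$ makes $e_n$ the largest elementary divisor; it corresponds to $m = 1$, and since $\lambda^{(j)}_1 = \alpha_j$ for every $j$ we get $e_n = \prod_j p_j^{\alpha_j} = n$. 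Likewise $e_k = n$ precisely when $\lambda^{(j)}_{n-k+1} = \alpha_j$ for every $j$, i.e.\ when $n - k + 1 \leqslant n/p_j^{\alpha_j}$ for every $j$, i.e.\ when $n - k + 1 \leqslant n/\max_j p_j^{\alpha_j}$; this holds for exactly $n/\max_j p_j^{\alpha_j}$ indices $k$.

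For (2), I would first locate the smallest elementary divisor exceeding $1$: it is $e_{n - n/p_1 + 1}$, which corresponds to $m = n/p_1$. The identity above gives $\lambda^{(1)}_{n/p_1} = 1$ (it is $\geqslant 1$ since $n/p_1 \leqslant n/p_1$, and it is not $\geqslant 2$ since that would force $n/p_1 \leqslant n/p_1^2$), while $\lambda^{(j)}_{n/p_1} = 0$ for $j \geqslant 2$ (since $n/p_1 > n/p_j$), so this elementary divisor equals $p_1$; hence the least $d > 1$ in $E(A_n)$ is $p_1$. For its multiplicity I would count: $e_k = p_1$ exactly when $\lambda^{(1)}_m = 1$ and $\lambda^{(j)}_m = 0$ for all $j \geqslant 2$ (with $m = n - k + 1$); the first condition reads $n/p_1^2 < m \leqslant n/p_1$ and the second, as in (1), reads $m > n/p_2$, so the number of admissible $m$ is $n/p_1 - \max(n/p_1^2,\, n/p_2)$. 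Since $p_2$ is prime, $p_2 \neq p_1^2$, so $\max(n/p_1^2,\, n/p_2)$ equals $n/p_1^2$ when $p_2 > p_1^2$ and $n/p_2$ when $p_2 < p_1^2$, which yields the two displayed values. In the degenerate case $\alpha_1 = 1$ the partition $\lambda^{(1)}$ has no part $\geqslant 2$, so the lower constraint on $m$ drops away (read $n/p_1^2$ as $0$) and the count is $n/p_1 - n/p_2$; when $r = 1$ the conditions coming from $j \geqslant 2$ are vacuous, and parts (2) and (3) follow directly from Theorem~\ref{thm:primary-smith}.

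Each of these steps is short; the substance is concentrated in the single identity $\lambda^{(j)}_m \geqslant t \iff m \leqslant n/p_j^t$, after which everything is bookkeeping. I expect the only delicate point to be tracking which threshold $n/p_j^t$ is largest in each count, which is precisely where the ordering $p_1 < \cdots < p_r$ and the case split $p_2 < p_1^2$ versus $p_2 > p_1^2$ are used, together with the degenerate configurations ($\alpha_1 = 1$, or $n$ a prime power) in which some $\lambda^{(j)}$ is too short for the generic formula to hold verbatim.
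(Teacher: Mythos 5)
Your argument is correct and follows essentially the same route as the paper: the paper reads (1)--(3) off the combined Young-diagram picture built from Theorem~\ref{thm:prime-set-and-partitions-inductive-step} and Algorithm~\ref{algo:diag} (heights, lowest non-empty row, and corners of the $p_i$-tableaux), and your threshold identity $\lambda^{(j)}_m \geqslant t \iff m \leqslant n/p_j^{t}$ is precisely the arithmetic restatement of ``the corners of the $p_j$-tableau sit in rows $n/p_j^{t}$.'' A small bonus of your explicit bookkeeping is that it surfaces the degenerate case $\alpha_1 = 1$ (where $n/p_1^2$ is not an integer and the multiplicity in (2) is $n/p_1 - n/p_2$ even when $p_2 > p_1^2$), a point the paper's pictorial proof passes over silently.
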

In the proof of this proposition, we will make use of the general
tableaux diagram found in Figure~\ref{fig:algo-as-yd}. 
 \begin{sidewaysfigure}[htbp]
\vspace*{12cm}
\hspace*{-1cm}
 \centering
\begin{tikzpicture}[description/.style={fill=white,inner sep=2pt},
  decoration={markings,
    mark connection node=my node,
    mark=at position .5 with {\node[transform shape] (my node) {$...$};}
  } ]

  % \matrix[matrix of math nodes, execute at empty
  % cell={\node{\phantom{p_1}};}] (m) {
  \matrix[matrix of math nodes, nodes in empty cells] (m) {
    \phantom{p_1}   & \phantom{p_1} & \phantom{p_1}      &p_1  & \phantom{p_1}  & \phantom{p_1}    & \phantom{p_1}   &[1cm]\phantom{p_1}& \phantom{p_1}  &    \phantom{p_1}&p_2  & \phantom{p_1}       &  \phantom{p_1}  &[0.5cm]&   \phantom{p_1} & \phantom{p_1}      &\dots& \phantom{p_1}  &  \phantom{p_1}   & \phantom{p_1}  &[0.5cm]& \phantom{p_1}   &   \phantom{p_1}    &p_r  &  \phantom{p_1} & \phantom{p_1}    &  \\[0.6cm]
    \phantom{p_1} &p_1&\phantom{p_1}  &\dots&\phantom{p_1}  & \phantom{p_1} &p_1&[1cm]&p_2& \phantom{p_1} &\dots&
    \phantom{p_1} &p_2&[0.5cm]& \phantom{p_1} &\phantom{p_1}  &\phantom{p_1}  & \phantom{p_1} &\phantom{p_1}  &\phantom{p_1}  &[0.5cm]&p_r&\phantom{p_1}  &\dots&
    \phantom{p_1}   &  \phantom{p_1}   &p_r &[0.1cm] \mathfrak{p}^{(n-1)}\\[0.7em]
    \phantom{p_1}  & \phantom{p_1}   &   \phantom{p_1}    &  \phantom{p_1}    & \phantom{p_1}  &   \phantom{p_1}  &   \phantom{p_1} &[1cm]& \phantom{p_1}   &  \phantom{p_1}     &    \phantom{p_1}  &   \phantom{p_1}     &\phantom{p_1}    &[0.5cm]& \phantom{p_1}   &    \phantom{p_1}   &    \phantom{p_1}  & \phantom{p_1}  &  \phantom{p_1}   &  \phantom{p_1} &[0.5cm]&  \phantom{p_1}  & \phantom{p_1}      &    \phantom{p_1}  &\phantom{p_1}   &    \phantom{p_1} &   \\
    \phantom{p_1}   & \phantom{p_1}   &  \phantom{p_1}     &   \phantom{p_1}   &\phantom{p_1}   & \phantom{p_1}    & \phantom{p_1}   &[1cm]&\phantom{p_1} &    \phantom{p_1}   &\phantom{p_1} &    \phantom{p_1}    &\phantom{p_1} &[0.5cm]&  \phantom{p_1}  & \phantom{p_1}      &   \phantom{p_1}   &  \phantom{p_1} & \phantom{p_1}    &\phantom{p_1}   &[0.5cm]&p_r   &   \phantom{p_1}    &\dots     &\phantom{p_1}   &   \phantom{p_1}  &p_r   \\[0.7em]
    \phantom{p_1}  & \phantom{p_1}   &  \phantom{p_1}     & \phantom{p_1}     & \phantom{p_1}  &   \phantom{p_1}  & \phantom{p_1}   &[1cm]&   \phantom{p_1} &    \phantom{p_1}   &     \phantom{p_1} &  \phantom{p_1}      &  \phantom{p_1}  &[0.5cm]&  \phantom{p_1}  &  \phantom{p_1}     &  \phantom{p_1}    & \phantom{p_1}  &   \phantom{p_1}  &\phantom{p_1}   &[0.5cm]&  \phantom{p_1}  &    \phantom{p_1}   &     \phantom{p_1} &\phantom{p_1}   &  \phantom{p_1}   &   \phantom{p_1} \\
    \phantom{p_1}  & \phantom{p_1}   &  \phantom{p_1}     &  \phantom{p_1}    & \phantom{p_1}  &  \phantom{p_1}   &   \phantom{p_1} &[1cm]&p_2& \phantom{p_1}      &\dots     &  \phantom{p_1}      &p_2   &[0.5cm]&  \phantom{p_1}  &  \phantom{p_1}     &   \phantom{p_1}   &  \phantom{p_1} & \phantom{p_1}    & \phantom{p_1}  &[0.5cm]&\phantom{p_1}    &  \phantom{p_1}     &    \phantom{p_1}  &  \phantom{p_1} &    \phantom{p_1} &   \\[0.7em]
    \phantom{p_1}  &  \phantom{p_1}  &  \phantom{p_1}     &  \phantom{p_1}    & \phantom{p_1}  &   \phantom{p_1}  & \phantom{p_1}   &[1cm]& \phantom{p_1}   &  \phantom{p_1}     &   \phantom{p_1}   &   \phantom{p_1}     &  \phantom{p_1}  &[0.5cm]& \phantom{p_1}   & \phantom{p_1}      & \phantom{p_1}     &  \phantom{p_1} &  \phantom{p_1}   & \phantom{p_1}  &[0.5cm]&  \phantom{p_1}  &  \phantom{p_1}     & \phantom{p_1}     & \phantom{p_1}  &    \phantom{p_1} &\phantom{p_1}    \\
    \phantom{p_1}  &p_1   &  \phantom{p_1}     &\dots &\phantom{p_1}   &   \phantom{p_1}  &p_1   &[1cm]&\phantom{p_1}    &   \phantom{p_1}    &   \phantom{p_1}   &    \phantom{p_1}    &   \phantom{p_1} &[0.5cm]&  \phantom{p_1}  &  \phantom{p_1}     &  \phantom{p_1}    &  \phantom{p_1} &\phantom{p_1}     &  \phantom{p_1} &[0.5cm]&\phantom{p_1} &    \phantom{p_1}   &\phantom{p_1} & \phantom{p_1}  &  \phantom{p_1}   &\\
    \phantom{p_1}   & \phantom{p_1}   &   \phantom{p_1}    & \phantom{p_1}     & \phantom{p_1}  &  \phantom{p_1}   &  \phantom{p_1}  &[1cm]&\phantom{p_1} &\phantom{p_1} & \phantom{p_1}  &       \phantom{p_1} & \phantom{p_1}   &[0.5cm]&  \phantom{p_1}  &   \phantom{p_1}    &   \phantom{p_1}   &\phantom{p_1}   &  \phantom{p_1}   &  \phantom{p_1} &[0.5cm]&p_r   &\dots&p_r     &  \phantom{p_1} & \phantom{p_1}    & \phantom{p_1}   \\
    \phantom{p_1}  & \phantom{p_1}   &   \phantom{p_1}    &  \phantom{p_1}    &  \phantom{p_1} &    \phantom{p_1} &  \phantom{p_1}  &[1cm]&  \phantom{p_1}  &    \phantom{p_1}   &   \phantom{p_1}   & \phantom{p_1}       &   \phantom{p_1} &[0.5cm]&\phantom{p_1}    &     \phantom{p_1}  &  \phantom{p_1}    &  \phantom{p_1} &   \phantom{p_1}  &  \phantom{p_1} &[0.5cm]&  \phantom{p_1}  &      \phantom{p_1} &   \phantom{p_1}   &  \phantom{p_1} &    \phantom{p_1} &   \phantom{p_1} &\vdots\\
    \phantom{p_1}  &\phantom{p_1} & \phantom{p_1} &  \phantom{p_1} & \phantom{p_1}  & \phantom{p_1}    & \phantom{p_1}   &[1cm]&p_2  &\dots      &p_2   &  \phantom{p_1}      &  \phantom{p_1}  &[0.5cm]&  \phantom{p_1}  &  \phantom{p_1}     &  \phantom{p_1}    & \phantom{p_1}  &\phantom{p_1}     &  \phantom{p_1} &[0.5cm]&  \phantom{p_1}  & \phantom{p_1}      &    \phantom{p_1}  &  \phantom{p_1} & \phantom{p_1}    & \phantom{p_1}   \\
    \phantom{p_1}   &  \phantom{p_1}  & \phantom{p_1}      &  \phantom{p_1}    &  \phantom{p_1} &  \phantom{p_1}   &   \phantom{p_1} &[1cm]&\phantom{p_1} &\phantom{p_1}  &\phantom{p_1}   &   \phantom{p_1}     &   \phantom{p_1} &[0.5cm]&  \phantom{p_1}  &      \phantom{p_1} &    \phantom{p_1}  &  \phantom{p_1} & \phantom{p_1}    &  \phantom{p_1} &[0.5cm]&p_r   &\dots      &p_r     & \phantom{p_1}  & \phantom{p_1}    &  \phantom{p_1}  \\
    \phantom{p_1}   &p_1   &\dots      &p_1     &  \phantom{p_1} & \phantom{p_1}    &  \phantom{p_1}  &[1cm]&   \phantom{p_1} & \phantom{p_1}      & \phantom{p_1}     &  \phantom{p_1}      &  \phantom{p_1}  &[0.5cm]& \phantom{p_1}   &    \phantom{p_1}   & \phantom{p_1}     &  \phantom{p_1} & \phantom{p_1}    & \phantom{p_1}  &[0.5cm]&\phantom{p_1} & \phantom{p_1} & \phantom{p_1} &  \phantom{p_1} &  \phantom{p_1}   & \phantom{p_1}   \\
    \phantom{p_1}   &  \phantom{p_1}  & \phantom{p_1}      &   \phantom{p_1}   &\phantom{p_1}   &   \phantom{p_1}  &   \phantom{p_1} &[1cm]& \phantom{p_1}   & \phantom{p_1}      &    \phantom{p_1}  & \phantom{p_1}       &  \phantom{p_1}  &[0.5cm]&   \phantom{p_1} & \phantom{p_1}      &  \phantom{p_1}    &  \phantom{p_1} & \phantom{p_1}    &\phantom{p_1}   &[0.5cm]&  \phantom{p_1}  &   \phantom{p_1}    &    \phantom{p_1}  & \phantom{p_1}  & \phantom{p_1}    & \phantom{p_1}   \\
    \phantom{p_1}   &\phantom{p_1} &\phantom{p_1} &  \phantom{p_1} &  \phantom{p_1} & \phantom{p_1}    &  \phantom{p_1}  &[1cm]&p_2   &\dots      &p_2     &   \phantom{p_1}     &  \phantom{p_1}  &[0.5cm]&   \phantom{p_1} & \phantom{p_1}      & \phantom{p_1}     &\phantom{p_1}   & \phantom{p_1}    &  \phantom{p_1} &[0.5cm]&\phantom{p_1}    &    \phantom{p_1}   &   \phantom{p_1}   & \phantom{p_1}  &    \phantom{p_1} & \phantom{p_1}   \\
    \phantom{p_1}   &p_1   &\dots      &p_1     &  \phantom{p_1} & \phantom{p_1}    & \phantom{p_1}   &[1cm]&\phantom{p_1} &  \phantom{p_1}     &  \phantom{p_1}    &   \phantom{p_1}     &   \phantom{p_1} &[0.5cm]& \phantom{p_1}   &  \phantom{p_1}     &  \phantom{p_1}    &\phantom{p_1}   &  \phantom{p_1}   & \phantom{p_1}  &[0.5cm]&\phantom{p_1} &\phantom{p_1}  & \phantom{p_1} & \phantom{p_1}  &   \phantom{p_1}  & \phantom{p_1}   \\
    \phantom{p_1}   & \phantom{p_1}   & \phantom{p_1}      & \phantom{p_1}     & \phantom{p_1}  &  \phantom{p_1}   & \phantom{p_1}   &[1cm]&  \phantom{p_1}  &    \phantom{p_1}   &\phantom{p_1}      &   \phantom{p_1}     &   \phantom{p_1} &[0.5cm]&\phantom{p_1}    &    \phantom{p_1}   &  \phantom{p_1}    & \phantom{p_1}  &    \phantom{p_1} & \phantom{p_1}  &[0.5cm]&   \phantom{p_1} &      \phantom{p_1} & \phantom{p_1}     & \phantom{p_1}  &  \phantom{p_1}   &  \phantom{p_1}  \\
    \phantom{p_1}   &  \phantom{p_1}  &  \phantom{p_1}     &    \phantom{p_1}  & \phantom{p_1}  & \phantom{p_1}    & \phantom{p_1}   &[1cm]& \phantom{p_1}   &      \phantom{p_1} & \phantom{p_1}     &  \phantom{p_1}      & \phantom{p_1}   &[0.5cm]& \phantom{p_1}   &   \phantom{p_1}    &  \phantom{p_1}    &  \phantom{p_1} &  \phantom{p_1}   & \phantom{p_1}  &[0.5cm]&  \phantom{p_1}  &  \phantom{p_1}     & \phantom{p_1}     &\phantom{p_1}   &   \phantom{p_1}  &   \phantom{p_1} \\
    \phantom{p_1}   &p_1   &    \phantom{p_1}   &   \phantom{p_1}   & \phantom{p_1}  & \phantom{p_1}    &   \phantom{p_1} &[1cm]&\phantom{p_1} &   \phantom{p_1}    &  \phantom{p_1}    &    \phantom{p_1}    & \phantom{p_1}   &[0.5cm]& \phantom{p_1}   &    \phantom{p_1}   &  \phantom{p_1}    & \phantom{p_1}  &  \phantom{p_1}   &  \phantom{p_1} &[0.5cm]&p_r   &     \phantom{p_1}  & \phantom{p_1}     & \phantom{p_1}  & \phantom{p_1}    & \phantom{p_1}   \\
    \phantom{p_1}   &\phantom{p_1} & \phantom{p_1}      &   \phantom{p_1}   &\phantom{p_1}   & \phantom{p_1}    &   \phantom{p_1} &[1cm]&p_2   &   \phantom{p_1}    &  \phantom{p_1}    &  \phantom{p_1}      &  \phantom{p_1}  &[0.5cm]& \phantom{p_1}   & \phantom{p_1}      &  \phantom{p_1}    & \phantom{p_1}  &   \phantom{p_1}  &\phantom{p_1}   &[0.5cm]&  \phantom{p_1}  &    \phantom{p_1}   &\phantom{p_1}      &  \phantom{p_1} &    \phantom{p_1} & \phantom{p_1}   \\
    \phantom{p_1}   &  \phantom{p_1}  &   \phantom{p_1}    & \phantom{p_1}     &\phantom{p_1}   &\phantom{p_1}     &  \phantom{p_1}  &[1cm]&   \phantom{p_1} &   \phantom{p_1}    &   \phantom{p_1}   & \phantom{p_1}       & \phantom{p_1}   &[0.5cm]& \phantom{p_1}   &     \phantom{p_1}  &   \phantom{p_1}   &\phantom{p_1}   & \phantom{p_1}    &  \phantom{p_1} &[0.5cm]&p_r&    \phantom{p_1}   &     \phantom{p_1} & \phantom{p_1}  & \phantom{p_1}    &\phantom{p_1} &[0.1cm]\mathfrak{p}^{(n-np_r^{-1})}   \\
    \phantom{p_1}   & \phantom{p_1}   &  \phantom{p_1}     &    \phantom{p_1}  &  \phantom{p_1} & \phantom{p_1}    &  \phantom{p_1}  &[1cm]&   \phantom{p_1} &  \phantom{p_1}     &    \phantom{p_1}  &   \phantom{p_1}     &  \phantom{p_1}  &[0.5cm]& \phantom{p_1}   &   \phantom{p_1}    &\phantom{p_1}      &  \phantom{p_1} &    \phantom{p_1} &  \phantom{p_1} &[0.5cm]&\phantom{p_1}    &     \phantom{p_1}  &  \phantom{p_1}    & \phantom{p_1}  &    \phantom{p_1} & \phantom{p_1}   \\
    &\phantom{p_1} &  \phantom{p_1}     & \phantom{p_1}     &  \phantom{p_1} &   \phantom{p_1}  &  \phantom{p_1}  &[1cm]&p_2   &  \phantom{p_1}     &  \phantom{p_1}    &       \phantom{p_1} &\phantom{p_1}    &[0.5cm]&\phantom{p_1}    &      \phantom{p_1} &   \phantom{p_1}   &  \phantom{p_1} &    \phantom{p_1} &  \phantom{p_1} &[0.5cm]&\phantom{p_1}    &   \phantom{p_1}  &   \phantom{p_1}   & \phantom{p_1}  &  \phantom{p_1}   &   \\
    \phantom{p_1} &p_1 &\phantom{p_1}  &\phantom{p_1}  & \phantom{p_1} & \phantom{p_1} & \phantom{p_1} &[1cm]&\phantom{p_1}  & \phantom{p_1} & \phantom{p_1} & \phantom{p_1} &
    \phantom{p_1} &[0.5cm]& \phantom{p_1} & \phantom{p_1} & \phantom{p_1} & \phantom{p_1} &\phantom{p_1}  & \phantom{p_1} &[0.5cm]&\vdots &
\phantom{p_1}     & \phantom{p_1} & \phantom{p_1} &
    \phantom{p_1}    &\phantom{p_1} &[0.1cm] \vdots\\
    \phantom{p_1} &\vdots &\phantom{p_1}  &\phantom{p_1}  &\phantom{p_1} & \phantom{p_1} & \phantom{p_1} &\phantom{p_1}  &\vdots & \phantom{p_1} &\phantom{p_1}  &\phantom{p_1}  & \phantom{p_1} & \phantom{p_1} &\phantom{p_1} 
    &\phantom{p_1}  &\phantom{p_1}  &\phantom{p_1}  & \phantom{p_1} & \phantom{p_1} & \phantom{p_1} & \phantom{p_1} & \phantom{p_1} & \phantom{p_1} &\phantom{p_1}  &\phantom{p_1} 
    \phantom{p_1}    &\phantom{p_1} &[0.1cm] \\[2mm]
    \phantom{p_1}    & \phantom{p_1} &\phantom{p_1}  & \phantom{p_1} & \phantom{p_1} & \phantom{p_1} &\phantom{p_1}  &\phantom{p_1}   &\phantom{p_1}  &\phantom{p_1}  &\phantom{p_1}  &\phantom{p_1}  &\phantom{p_1}  &\phantom{p_1}   &\phantom{p_1}  & \phantom{p_1} & \phantom{p_1} & \phantom{p_1} & \phantom{p_1}  &\phantom{p_1}   &\phantom{p_1}   & \phantom{p_1} & \phantom{p_1}  &\phantom{p_1}   &\phantom{p_1}  &\phantom{p_1}  & \phantom{p_1} &[0.1cm] \mathfrak{p}^{(0)}\\
    \phantom{p_1} &\phantom{p_1} &\phantom{p_1} &\phantom{p_1}
    &\phantom{p_1} &\phantom{p_1} &\phantom{p_1} &\phantom{p_1}
    &\phantom{p_1} &\phantom{p_1} &\phantom{p_1} & \phantom{p_1} & \phantom{p_1} &\phantom{p_1}  &\phantom{p_1} 
    & \phantom{p_1} & \phantom{p_1} &\phantom{p_1}  &\phantom{p_1}  & \phantom{p_1} & \phantom{p_1} &\phantom{p_1}  & \phantom{p_1} & \phantom{p_1} &\phantom{p_1}  & \phantom{p_1} &\phantom{p_1}  &\phantom{p_1}  &\phantom{p_1} 
    \\
  };

  % The p_1 decoration
  \draw[<->] ([yshift=-7.7mm]m-1-2.north west)
  --([yshift=-3mm]m-1-7.south east)
  node[midway,description]{$\alpha_1$}; \draw[<->]
  ([xshift=-5mm]m-2-2.north west)
  -- ([xshift=-5mm]m-8-2.south west) node[midway, description]
  {$\frac{n}{p_1^{\alpha_1}}$};

  % The Four Pillars
  \draw[decorate] (m-2-2.north west) -- (m-8-2.south west);
  \draw[decorate] (m-2-2.north east) -- (m-8-2.south east);
  \draw[decorate] (m-2-7.north west) -- (m-8-7.south west);
  \draw[decorate] (m-2-7.north east) -- (m-8-7.south east);

  % The Dashed Connector and decorated connector
  \draw[dashed] (m-8-7.south east) -- (m-12-5.south west);
  \draw[decorate] (m-8-2.south west) -- (m-12-2.south west);

  % The next four pillars
  \draw[decorate] (m-12-2.south west) -- (m-17-2.north west);
  \draw[decorate] (m-12-3.south west) -- (m-17-3.north west);
  \draw[decorate] (m-12-4.south west) -- (m-17-4.north west);
  \draw[decorate] (m-12-5.south west) -- (m-17-5.north west);

  % The Dashed Connector and decorated connector
  \draw[decorate] (m-16-2.south west) -- (m-18-2.south west);
  \draw[dashed] (m-16-5.south west) -- (m-18-2.south east);

  % The last pillars
  \draw[decorate] (m-18-2.south west) -- (m-24-2.south west);
  \draw[decorate] (m-18-2.south east) -- (m-24-2.south east);

  % Indices addition
  \draw[<->] ([xshift=-7mm]m-19-2.north west) --
  ([xshift=-7mm]m-24-2.south west) node[midway, description]
  {$\frac{n}{p_1}-\frac{n}{p_1^2}$}; \draw[<->]
  ([yshift=-7.7mm]m-11-2.north west) -- ([yshift=-3mm]m-11-4.south
  east) node[midway, description]{$i_1$}; \draw[<->]
  ([xshift=-10mm]m-13-2.north west) -- ([xshift=-10mm]m-16-2.south
  west) node[midway, description]
  {$\frac{n}{p_1^{i_1}}-\frac{n}{p_1^{i_1+1}}$};

  % The p_2 decoration
  \draw[<->] ([yshift=-7.7mm]m-1-9.north west) --
  ([yshift=-3mm]m-1-13.south east) node[midway,
  description]{$\alpha_2$}; \draw[<->] ([xshift=-5mm]m-2-9.north west)
  -- ([xshift=-5mm]m-6-9.south west) node[midway, description]
  {$\frac{n}{p_2^{\alpha_2}}$};

  % The four pillars
  \draw[decorate] (m-2-9.north west) -- (m-6-9.south west);
  \draw[decorate] (m-2-9.north east) -- (m-6-9.south east);
  \draw[decorate] (m-2-13.north west) -- (m-6-13.south west);
  \draw[decorate] (m-2-13.north east) -- (m-6-13.south east);

  % The dashed Connector and decorated connector
  \draw[decorate] (m-6-9.south west) -- (m-10-9.south west);
  \draw[dashed] (m-6-13.south east) -- (m-10-11.south east);

  % The next four pillars
  \draw[decorate] (m-10-9.south west) -- (m-16-9.north west);
  \draw[decorate] (m-10-10.south west) -- (m-16-10.north west);
  \draw[decorate] (m-10-11.south west) -- (m-16-11.north west);
  \draw[decorate] (m-10-12.south west) -- (m-16-12.north west);

  % The dashed connector and decorated connector
  \draw[decorate] (m-15-9.south west) -- (m-19-9.south west);
  \draw[dashed] (m-15-11.south east) -- (m-19-9.south east);

  % The two pillars
  \draw[decorate] (m-19-9.south west) -- (m-23-9.south west);
  \draw[decorate] (m-19-9.south east) -- (m-23-9.south east);

  % Indices addition
  \draw[<->] ([xshift=-7mm]m-20-9.north west) --
  ([xshift=-7mm]m-23-9.south west) node[midway, description]
  {$\frac{n}{p_2}-\frac{n}{p_2^2}$}; \draw[<->]
  ([yshift=-7.7mm]m-9-9.north west) -- ([yshift=-3mm]m-9-11.south
  east) node[midway, description]{$i_2$}; \draw[<->]
  ([xshift=-10mm]m-11-9.north west) -- ([xshift=-10mm]m-15-9.south
  west) node[midway, description]
  {$\frac{n}{p_2^{i_2}}-\frac{n}{p_2^{i_2+1}}$};

  % The p_r decoration
\draw[<->] ([yshift=-4mm]m-1-21.south east) --
  ([yshift=-4mm]m-1-27.south east) node[midway,
  description]{$\alpha_r$}; 
  \draw[<->] ([xshift=-5mm]m-2-22.north
  west) -- ([xshift=-5mm]m-4-22.south west) node[midway, description]
  {$\frac{n}{p_r^{\alpha_r}}$};

  % The four pillars
  \draw[decorate] (m-2-22.north west) -- (m-4-22.south west);
  \draw[decorate] (m-2-22.north east) -- (m-4-22.south east);
  \draw[decorate] (m-2-27.north west) -- (m-4-27.south west);
  \draw[decorate] (m-2-27.north east) -- (m-4-27.south east);

  % The dashed connector and decorated connector
  \draw[decorate] (m-4-22.south west) -- (m-8-22.south west);
  \draw[dashed] (m-4-27.south east) -- (m-8-25.south west);

  % the four pillars
  \draw[decorate] (m-8-22.south west) -- (m-13-22.north west);
  \draw[decorate] (m-8-23.south west) -- (m-13-23.north west);
  \draw[decorate] (m-8-24.south west) -- (m-13-24.north west);
  \draw[decorate] (m-8-25.south west) -- (m-13-25.north west);

  % The dashed connector and decorated connector
  \draw[decorate] (m-12-22.south west) -- (m-18-22.south west);
  \draw[dashed] (m-12-25.south west) -- (m-18-22.south east);

  % The last pillars
  \draw[decorate] (m-18-22.south west) -- (m-21-22.south west);
  \draw[decorate] (m-18-22.south east) -- (m-21-22.south east);

  % Indices addition
  \draw[<->] ([xshift=-7mm]m-19-22.north west) --
  ([xshift=-7mm]m-21-22.south west) node[midway, description]
  {$\frac{n}{p_r}-\frac{n}{p_r^2}$}; \draw[<->]
  ([yshift=-5.7mm]m-7-22.north west) -- ([yshift=-1mm]m-7-24.south
  east) node[midway, description]{$i_r$}; \draw[<->]
  ([xshift=-9.5mm]m-9-22.north west) -- ([xshift=-9.5mm]m-12-22.south
  west) node[midway, description]
  {$\frac{n}{p_r^{i_r}}-\frac{n}{p_r^{i_r+1}}$};
  % The ruled sheet look

  % The Heading
  \draw ([yshift=12mm]m-2-1.north west) -- ([yshift=12mm]m-2-27.north east);
  \draw ([yshift=7mm]m-2-1.north west) -- ([yshift=7mm]m-2-27.north east);

  % The rest in the same order
  \draw (m-2-1.north west) -- (m-2-27.north east); \draw (m-2-1.south
  west) -- (m-2-27.south east);

  \draw ([xshift=8mm]m-4-13.south east) -- (m-4-27.south east);
  \draw ([xshift=8mm]m-4-13.north east) -- (m-4-27.north east);
  % end of pr -1 run
 
  \draw ([xshift=3mm]m-5-7.south east) -- ([xshift=-2mm]m-5-21.south
  east);
  \draw ([xshift=3mm]m-6-7.south east) -- ([xshift=-2mm]m-6-21.south
  east);
  % end of p2 - 1run

  \draw (m-7-1.south west) -- ([xshift=-2mm]m-7-9.south west);
  \draw (m-8-1.south west) -- ([xshift=-2mm]m-8-9.south west);
  \draw (m-7-14.south west) -- ([xshift=-2mm]m-7-21.south east);
  \draw (m-8-14.south west) -- (m-8-27.south east);
  % end of p1 -1 run

  \draw ([xshift=3mm]m-9-12.south east)--(m-9-27.south east);
  \draw (m-10-7.south west) -- ([xshift=-2mm]m-10-19.south east);

  \draw ([xshift=2mm]m-11-6.south west) -- (m-11-27.south east); \draw
  (m-12-1.south west) -- ([xshift=-2mm]m-12-6.south east);
  \draw ([xshift=3mm]m-12-11.south east) -- (m-12-27.south east);

  \draw (m-13-1.south west) -- ([xshift=-2mm]m-13-6.south east); \draw
  ([xshift=3mm]m-13-12.south west) -- ([xshift=-2mm]m-13-21.south
  east);

  \draw (m-14-6.south west) -- ([xshift=-2mm]m-14-21.south east);
  \draw ([xshift=3mm]m-14-24.south east) -- (m-14-27.south east);

  \draw (m-16-1.north west) -- ([xshift=-2mm]m-16-22.north west);
  \draw (m-16-25.north west) -- (m-16-27.north east);

  \draw (m-16-1.south west) -- ([xshift=-2mm]m-16-9.south west); 
\draw (m-16-12.south west) -- ([xshift=-2mm]m-16-22.south west); 
\draw  (m-16-24.south west) -- (m-16-27.south east);

  \draw (m-19-1.north west) -- ([xshift=-3mm]m-19-9.north west); \draw
  (m-19-11.north west) -- (m-19-27.north east);

  \draw (m-20-1.north west) -- (m-20-27.north east); \draw
  ([xshift=2mm]m-21-3.north west) -- (m-21-27.north east);

  \draw ([xshift=2mm]m-22-3.north west) -- ([xshift=-2mm]m-22-6.north
  east); \draw ([xshift=2mm]m-22-10.north west) -- (m-22-27.north
  east);

  \draw ([yshift=3.5mm,xshift=2mm]m-24-3.north west) --
  ([yshift=3.5mm]m-24-13.north east); \draw
  ([yshift=-0.5mm]m-24-1.north west) -- ([yshift=-0.5mm]m-24-5.north
  east); \draw (m-24-1.south west) -- (m-24-5.south east);

  \draw ([xshift=2mm]m-23-3.north west) -- (m-23-13.north east);

  \foreach \i in {26, 27} { \draw (m-\i-1.north west) --
    (m-\i-27.north east) ; };

  % The white rectangle.
  \fill[white] (m-1-16.north west) -- (m-1-19.north east) --
  ([xshift=-4mm]m-27-19.south east) -- ([xshift=-4mm]m-27-16.south
  west);

  % A Couple of Dots!
\node (p1) at ([xshift=3mm]m-1-17) {$\dots$};
\node (p2) at ([xshift=3mm]m-4-17) {$\dots$};
\node (p3) at ([xshift=3mm]m-11-17) {$\dots$};
\node (p4) at ([xshift=3mm]m-17-17) {$\dots$};
\node (p5) at ([xshift=3mm]m-22-17) {$\dots$};
\end{tikzpicture}
 \caption{Algorithm seen through Young Diagrams}
 \label{fig:algo-as-yd}
 \end{sidewaysfigure}
\pagebreak
\begin{proof}\mbox{}
  \begin{enumerate}
    \item The multiplicity of $1$ in $E(A_n)$ is 
      \begin{align*}
        &n - \max\{\text{height of } p_i \text{ tableau} \mid i=1, \dots, r\} \\
        &= n - \frac{n}{p_1}
      \end{align*}
    \item Let $d > 1$ be the least elementary divisor of $A_n$. Then $d$ is the product of $p_i$s in the lowest non-empty row. Since the $p_1$-tableau is the tallest having one box in the last row, this $d$ must be $p_1$. The multiplicity of $p_1$ is
      \begin{align*}
        &\frac  n {p_1} - \text{Index of the row containing the second last corner} \\
        &= \begin{cases}
            \frac n {p_1} - \frac n {p_2},   &\text{ if } \frac n {p_2} > \frac n {p_1^2}\\
            \frac n {p_1} - \frac n {p_1^2}, &\text{ if } \frac n {p_2} < \frac n {p_1^2}  
           \end{cases}
      \end{align*}
      which proves the claim.
    \item The largest elementary divisor is the product of the numbers in the first row of Figure~\ref{fig:algo-as-yd}. This number is clearly $n$. The multiplicity of $n$ in $E(A_n)$ is the index of the row containing the first corner. We see that this multiplicity is
      \begin{align*}
        &\quad\min\left\{\frac{n}{p_i^{\alpha_i}}\bigg| 1 \leqslant i \leqslant r\right\} \\
        &=\frac{n}{\max\left\{p_i^{\alpha_i}: 1 \leqslant i  \leqslant r\right\}}
      \end{align*}
This completes the proof. \qedhere
  \end{enumerate}
\end{proof}

This proposition and its proof suggest that
Figure~\ref{fig:algo-as-yd}, in principle, gives a ``formula'' for
the elementary divisors and their muliplicities, equivalently, the
multiset $E(A_n)$.
\begin{thm}\label{pro:characterising-ele-divisor-ratios}
  \mbox{}
  \begin{enumerate}[ref=(\arabic*)]
  \item The $n$-tuple $\left(e_1, \frac{e_2}{e_1}, \dots, \frac{e_n}{e_{n-1}}\right)$ is a permutation of the $n$-tuple
    \[(\underbrace{p_1, \dots, p_1}_{\alpha_1 \text{ times}}, \underbrace{p_2, \dots, p_2}_{\alpha_2 \text{ times}},\dots,\underbrace{p_r,\dots,p_r}_{\alpha_r \text{times}},\underbrace{1,\dots,1}_{n-\sum_i\alpha_i \text{ times}})\]
  \item \label{pro-part:index-of-occurence} The ratio $\frac{e_{j}}{e_{j-1}}$ is $p_i$ if and only if $j =
n-\frac{n}{p_i^t}+1$ for some $t$ satisfying $1 \leqslant t \leqslant
\alpha_i$. 
  \end{enumerate}
\end{thm}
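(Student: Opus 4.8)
\textit{Proof proposal.} The plan is to read everything off the partitions $\lambda^{(1)},\dots,\lambda^{(r)}$ supplied by Theorem~\ref{thm:prime-set-and-partitions-inductive-step} together with the elementary-divisor formula $e_k=\prod_{j=1}^{r}p_j^{\lambda^{(j)}_{n-k+1}}$ of Algorithm~\ref{algo:diag}. Adopting the conventions $e_0:=1$ and $\lambda^{(j)}_{n+1}:=0$, the first step is to record the telescoping identity
\[
\frac{e_k}{e_{k-1}}=\prod_{j=1}^{r}p_j^{\,\lambda^{(j)}_{n-k+1}-\lambda^{(j)}_{n-k+2}},\qquad 1\leqslant k\leqslant n,
\]
in which every exponent $\lambda^{(j)}_{n-k+1}-\lambda^{(j)}_{n-k+2}$ is $\geqslant 0$ because $\lambda^{(j)}$ is weakly decreasing. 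Thus the entire statement reduces to locating, for each $j$, the positions at which the sequence $m\mapsto\lambda^{(j)}_m$ drops, to checking that each drop has size exactly $1$, and to checking that the ``drop sets'' of distinct primes are disjoint.

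The key computation is a cumulative count resting on the identity $\sum_{i=0}^{s}\phi(p_j^{i})=p_j^{s}$. Since, by Theorem~\ref{thm:prime-set-and-partitions-inductive-step}, the part $\alpha_j-i$ occurs $\phi(p_j^{i})\,n/p_j^{\alpha_j}$ times in $\lambda^{(j)}$, that identity gives that the number of parts of $\lambda^{(j)}$ which are $\geqslant u$ equals $n/p_j^{\,u}$ for every $0\leqslant u\leqslant\alpha_j$. In particular $\lambda^{(j)}$ has exactly $n$ parts (so that $e_k=\prod_j p_j^{\lambda^{(j)}_{n-k+1}}$ makes sense for $1\leqslant k\leqslant n$) and $\lambda^{(j)}_n=0$; moreover, since all these multiplicities are positive, $m\mapsto\lambda^{(j)}_m$ is weakly decreasing and, using $p_j^{\,u}\mid n$, the difference $\lambda^{(j)}_{m}-\lambda^{(j)}_{m+1}$ equals $1$ exactly when $m=n/p_j^{\,t}$ for some $1\leqslant t\leqslant\alpha_j$ and equals $0$ otherwise. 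Substituting $m=n-k+1$ into the telescoping identity, we conclude that $p_j$ divides $e_k/e_{k-1}$ if and only if $k=n-n/p_j^{\,t}+1$ for some $1\leqslant t\leqslant\alpha_j$; and $e_1=\prod_j p_j^{\lambda^{(j)}_n}=1$.

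Both parts then follow. For part (1): since $p_i\neq p_{i'}$ forces $p_i^{\,t}\neq p_{i'}^{\,s}$, the sets $D_i:=\{\,n-n/p_i^{\,t}+1:1\leqslant t\leqslant\alpha_i\,\}$ for $i=1,\dots,r$ are pairwise disjoint subsets of $\{1,\dots,n\}$ with $|D_i|=\alpha_i$; hence each ratio $e_k/e_{k-1}$ equals the single prime $p_i$ when $k\in D_i$ and equals $1$ otherwise, so the tuple $(e_1,e_2/e_1,\dots,e_n/e_{n-1})$ is a rearrangement of the asserted tuple, with $1$ appearing $n-\sum_i\alpha_i$ times. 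For part (2): granting part (1), the assertion ``$e_j/e_{j-1}=p_i$'' is equivalent to ``$p_i\mid e_j/e_{j-1}$'', which by the equivalence just established means precisely $j\in D_i$, i.e.\ $j=n-n/p_i^{\,t}+1$ for some $1\leqslant t\leqslant\alpha_i$.

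I do not expect a serious obstacle: the whole content is the single identity $\sum_{i\leqslant s}\phi(p^{i})=p^{s}$ together with careful bookkeeping. The two places demanding care are the index reversal $m=n-k+1$ --- chosen so that increasing $k$ runs through $\lambda^{(j)}$ from its smallest to its largest parts, matching $e_{k-1}\mid e_k$ --- and the boundary index $k=1$, handled cleanly by the conventions $e_0:=1$ and $\lambda^{(j)}_{n+1}:=0$. For the reader's geometric intuition it is worth adding the remark that $D_i$ is exactly the set of rows of Figure~\ref{fig:algo-as-yd} containing a corner of the $p_i$-tableau, which is the precise sense in which that picture is a ``formula'' for the multiset $E(A_n)$.
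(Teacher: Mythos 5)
Your proof is correct and is essentially the paper's own argument in algebraic dress: your ``drop sets'' $D_i$ are exactly the rows of Figure~\ref{fig:algo-as-yd} containing a corner of the $p_i$-tableau, your identity $\sum_{i\leqslant s}\phi(p^i)=p^s$ is the paper's computation of the corner row index $\frac{n}{p_i^{\alpha_i}}\sum_{j\leqslant l}\phi(p_i^j)=\frac{n}{p_i^{\alpha_i-l}}$, and your disjointness of the $D_i$ is the paper's contradiction that two distinct primes cannot have corners in the same row.
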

\begin{proof}
  \begin{enumerate}
  \item We need to prove that $\frac{e_i}{e_{i-1}}$ is a prime divisor of $n$. Note that this claim is equivalent to proving that every row in Figure~\ref{fig:algo-as-yd} has atmost one corner. That is, exactly one of the $p_i$ tableau has a corner. Towards a contradiction, assume that there are two distinct primes $p_{i_1}$ and $p_{i_2}$ whose tableaux for $n$ have a corner each in the same row $R$:
    
    Thus, there are indices $l_1$ and $l_2$ with $0 \leqslant l_1 \leqslant \alpha_{i_1}-1$ and $0 \leqslant l_2 \leqslant \alpha_{i_2}-1$  such that 
    \begin{align*}
      \label{eq:row-index-for-corner}
      R &= \sum_{j=0}^{l_1} \frac{n}{p_{i_1}^{\alpha_{i_1}}}\phi(p_{i_1}^{j})\\
        &= \sum_{j=0}^{l_2} \frac{n}{p_{i_2}^{\alpha_{i_2}}}\phi(p_{i_2}^{j})
    \end{align*}
This equality implies that \[\frac{n}{p_{i_1}^{\alpha_{i_1}-l_1}} = \frac{n}{p_{i_2}^{\alpha_{i_2}-l_2}}\]
which is a contradiction, since $p_{i_1}$ and $p_{i_2}$ are distinct primes.
\item Notice that index $k$ of the $n$-tuple contains $p_i$ if the row $k$ contains a corner of the $p_i$-tableau. These indices are therefore given by 
\[\left\{\frac{n}{p_i^{\alpha_i}}\sum_{j=0}^r\phi(p_i^{\alpha_i-j})+1 \bigg| r=0,1, \dots,\alpha_i-1\right\}.\]
Thus, we get, the indices which contain $p_i$ are
\begin{align*}
  \left\{n-\frac{n}{p_i^{r}}+1 \bigg| r=1, \dots,\alpha_i\right\}
\end{align*}
which completes the proof. \qedhere
  \end{enumerate}
\end{proof}
 We leave it to the reader to find another proof of
Proposition~\ref{pro:some-ele-div-stat} using Theorem~\ref{pro:characterising-ele-divisor-ratios}.
% \begin{cor}
%   If $n$ is square-free, that is, $\alpha_i = 1$ for all $i=1,\dots, r$, then 
% \[\left(e_1, \frac{e_2}{e_1}, \dots, \frac{e_n}{e_{n-1}}\right)= (p_r, \dots, p_1, 1, \dots, 1)\]
% \end{cor}
% \begin{cor}
%   Consider the $n$-tuple $\left(e_1, \frac{e_2}{e_1}, \dots, \frac{e_n}{e_{n-1}}\right)$. The number of times $p_i$ appears before $p_j$ appears the $r$th time $(1 \leqslant r \leqslant \alpha_j)$ is the integer $t$ such that $p_i^t < p_j^r < p_i^{t+1}$. 
% \end{cor}
% \begin{proof}
%   If $p_i$ appears exactly $t$ times before $p_j$ appears the $r$th time, then, by \ref{pro-part:index-of-occurence} of Theorem~\ref{pro:characterising-ele-divisor-ratios}, we have:
%   \begin{align*}
%     n - \frac n {p_i^t} + 1 < n - \frac n {p_j^r} + 1 < n - \frac n {p_i^{t+1}} + 1 \\
%     \frac{n}{p_i^t} > \frac{n}{p_j^r} > \frac{n}{p_i^{t+1}}
%   \end{align*}
% which immediately proves the proposition. 
% \end{proof}

In the following sections, we shall describe a basis for $G(n)$ in terms of the standard basis of $\oplus_{d|n} \ZZ[x]/\Phi_d(x)$ through a
pictorial algorithm: by a basis for $G(n)$ is meant a set of
generators
\[\{0 \oplus 0\oplus\cdots\oplus\underbrace{ 1 }_{i\text{th place}}\oplus 0 \oplus \cdots\oplus 0: 0 \leqslant i \leqslant n-1\}\]
 for the abelian group $\bigoplus \ZZ/e_n(i)\ZZ$ where $(e_n(0),
\dots, e_n(n-1))$ is the tuple of elementary divisors for $A_n$.
\section{Setup for the Algorithm}
\label{sec:basis-for-G1n}
In this section, we will state the definitions and prove some basic
lemmas that are instrumental to the algorithm in the next section.

To determine a basis for $G(n)$, it suffices to find a basis
$\{\mathfrak{p}^{(j)}:0 \leqslant j \leqslant n-1\}$ for $\bigoplus_{d \mid n}
  \ZZ[X]/\Phi_d(X)$ so that 
\begin{equation}
  \label{eq:generators-for-G1n}
  \{\overline{\Psi}_n(\mathfrak{p}^{(j)}): e_n(j) > 1\}
\end{equation}
is a set of generators for the abelian group $G(n)$ with respect to
which the relations are the simplest possible:
\begin{equation}
  \label{eq:relations-for-G1n}
  e_n(j) \overline{\Psi}_n(\mathfrak{p}^{(j)}) = 0.
\end{equation}
This idea is captured by the following definition:
\begin{defn}
\label{defn:smith-vec}
Given a positive integer $n$, let $(e_n(0), \dots, e_n(n-1))$ be the tuple
  of elementary divisors of $A_n$. We say that $(\mathfrak{p}^{(j)}: 0 \leqslant
  j \leqslant n-1)$ is a {\bfseries Smith vector} for $n$ if: 
\begin{enumerate}
\item $\left\{\mathfrak{p}^{(j)} : 0 \leqslant j
    \leqslant n-1 \right\}$ is a $\ZZ$-basis of $\bigoplus_{d \mid n}
  \ZZ[X]/\Phi_d(X)$ and
\item $a_j\mathfrak{p}^{(j)} \in \Im(\Psi_n)$
  if and only if $e_n(j) \mid a_j$.
\end{enumerate} 
\end{defn}
The following lemma will tell us how to compute Smith vector for $n$: 
\begin{lem}
  \label{lem:smith-vectors-are-columns-of-AV}
  If $(\mathfrak{p}^{(j)})$ is a Smith vector for $n$, then there exists $U_n,
  V_n \in GL_n(\mathbf{Z})$ such that $S(A_n) = U_nA_nV_n$ and
  $A_nV_n(\overline{X}^{j}) = e_n(j)\mathfrak{p}^{(j)}$ where \[(e_n(0), \dots, e_n(n-1))\]
 is the tuple of elementary divisors of $A_n$. 
\end{lem}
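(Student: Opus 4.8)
The plan is to read off the unimodular matrices $U_n, V_n$ directly from the Smith-vector data, so that the verification becomes a comparison of two lattices. Identify $L := \bigoplus_{d\mid n}\ZZ[X]/\Phi_d(X)$ with $\ZZ^n$ via its standard basis and the domain $\ZZ[X]/(X^n-1)$ with $\ZZ^n$ via $(1,\overline X,\dots,\overline X^{n-1})$; under these identifications $\Psi_n$ is multiplication by $A_n$, so $\Im(\Psi_n) = A_n\ZZ^n$ is a full-rank sublattice of $L$. Let $P\in\GL_n(\ZZ)$ be the matrix whose $j$th column is $\mathfrak{p}^{(j)}$ (it is unimodular because the $\mathfrak{p}^{(j)}$ form a $\ZZ$-basis of $L$), put $E := \diag(e_n(0),\dots,e_n(n-1))$, and set $U_n := P^{-1}$. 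Since the $j$th column of $PE$ is $e_n(j)\mathfrak{p}^{(j)}$ and since $(e_n(0),\dots,e_n(n-1))$ is by definition the increasing tuple of non-negative elementary divisors of $A_n$, the lemma follows once I exhibit a $V_n\in\GL_n(\ZZ)$ with $A_nV_n = PE$: then $A_nV_n(\overline X^{j}) = e_n(j)\mathfrak{p}^{(j)}$ and $U_nA_nV_n = P^{-1}PE = E = S(A_n)$.

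To produce $V_n$ I would show that the column lattice $(PE)\ZZ^n$ equals $A_n\ZZ^n = \Im(\Psi_n)$; granting this, $V_n := A_n^{-1}PE$ satisfies $V_n\ZZ^n = A_n^{-1}\bigl((PE)\ZZ^n\bigr) = A_n^{-1}(A_n\ZZ^n) = \ZZ^n$, hence $V_n\in\GL_n(\ZZ)$, and $A_nV_n = PE$ by construction. Now $(PE)\ZZ^n$ is precisely the sublattice $M\subseteq L$ generated by $\{e_n(j)\mathfrak{p}^{(j)} : 0\le j\le n-1\}$; expressed in the basis $(\mathfrak{p}^{(j)})$ it is $\bigoplus_j e_n(j)\ZZ$, so $[L:M] = \prod_j e_n(j) = |\det A_n|$ (the elementary divisors of a non-singular integer matrix multiply to $|\det|$). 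By part (2) of Definition~\ref{defn:smith-vec} applied with $a_j = e_n(j)$, each $e_n(j)\mathfrak{p}^{(j)}$ lies in $\Im(\Psi_n)$, so $M\subseteq\Im(\Psi_n)$; and $[L:\Im(\Psi_n)] = |G(n)| = |\det A_n|$. The chain $M\subseteq\Im(\Psi_n)\subseteq L$ with equal finite indices forces $M = \Im(\Psi_n)$, as required.

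The only substantive point is the identity $M = \Im(\Psi_n)$, obtained from the cardinality count; this is the step that actually uses the Smith-vector hypothesis, and only through the inclusion half of condition (2). I do not anticipate a genuine obstacle, but one should check at the outset that $A_n$ is non-singular (which holds because $\Psi_n$ is injective between free $\ZZ$-modules of the same rank), so that all $e_n(j)\ge 1$, $\prod_j e_n(j)\ne 0$, and $M$ has finite index in $L$; this legitimizes the index comparison. Everything else is routine manipulation of unimodular matrices.
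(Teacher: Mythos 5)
Your proposal is correct and follows essentially the same route as the paper: both take $U_n$ to be the change of basis from the Smith-vector basis to the standard basis, take $V_n$ to be the (unique, since $A_n$ is injective) lift of the vectors $e_n(j)\mathfrak{p}^{(j)}$ through $A_n$ — with integrality supplied by condition (2) of Definition~\ref{defn:smith-vec} — and deduce unimodularity of $V_n$ from the fact that the product of the elementary divisors equals $|\det(A_n)|$. The only cosmetic difference is that you certify $V_n \in \GL_n(\ZZ)$ by comparing the index of the lattice spanned by the $e_n(j)\mathfrak{p}^{(j)}$ with that of $\Im(\Psi_n)$, whereas the paper reads it off from $\det(U_n)\det(A_n)\det(V_n) = \det(S(A_n)) = |\det(A_n)|$.
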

\begin{proof}
  We introduce a notation for the standard basis of the direct sum
  $\oplus_{d \mid n} \ZZ[X]/\langle\Phi_d(X)\rangle$; for a divisor
  $d$ of $n$, and for every $i$ such that $0 \leqslant i \leqslant
  \phi(d)-1$, put:
  \begin{align*}
    g_{i,d}(X) := 0 \oplus \dots \oplus 0 \oplus X^i
    \bmod{(\Phi_d(X))} \oplus 0\oplus \dots\oplus 0. 
  \end{align*}
  Let $U_n$ be the endomorphism of $\oplus_{d \mid n}
  \ZZ[X]/\langle\Phi_d(X)\rangle$  that exchanges the basis underlying the given Smith vector with the standard basis:
  \[\mathfrak{p}^{(j)} \mapsto g_{i,d} \text{ where } j = \sum_{d' <d , d'\mid
    n} \phi(d') + i.\]
  Thus, $U_n$ is invertible. Since $e_n(j)\mathfrak{p}^{(j)}$ is in the image of $A_n$, it follows that there are vectors $h^{(j)} \in \ZZ[X]/\langle X^n-1\rangle$ such that 
  \[A_n(h^{(j)}) = e_n(j)\mathfrak{p}^{(j)}.\]
  Define the map $V_n:\ZZ[X]/\langle X^n-1\rangle \to \ZZ[X]/\langle X^n-1\rangle$ as follows:
  \[\overline{X}^{j} \mapsto h^{(j)},  \quad 0 \leqslant j \leqslant n-1\]
  Clearly, $U_n A_n V_n = S(A_n)$. It suffices to check that $V_n$ is
  an isomorphism, that is, $\det(V_n) = \pm 1$: 
  \begin{align*}
    U_n A_n V_n = S(A_n) &\Rightarrow \det(U_n) \det(A_n) \det(V_n) = \det(S(A_n)) \\
                        &\Rightarrow \det(A_n) \det(V_n) =
                        \pm|\det(A_n)| \text{ (since } \det(U_n) =
                        \pm 1)\\
                        &\Rightarrow \det(V_n) = \pm 1 \text{ (since }
                        \det(A_n) \neq 0).
  \end{align*}
This completes the proof.
\end{proof}
From Lemma~\ref{lem:smith-vectors-are-columns-of-AV}, we see that 
$\{e_n(j) \mathfrak{p}^{(j)}: 0 \leqslant j \leqslant n-1\}$
is a basis for the image of $\Psi_n$ and we have the following
isomorphism of $\ZZ$-modules:
\begin{equation}
  \label{eq:iso-class-of-G1n}
  G(n) \simeq \bigoplus_{e_n(j) > 1} \langle
  \overline{\Psi}_n(\mathfrak{p}^{(j)})\rangle \simeq \bigoplus_{e_n(j) > 1} \ZZ/e_n(j) \ZZ.
\end{equation} 

In Lemma~\ref{thm:smith-and-coprime}, for relatively prime positive
integers $m$ and $n$, we have shown that $S(A_{mn}) = S(A_m \otimes A_n)$. It is now natural to ask if Smith vectors for
$m$ and $n$ can be coaxed to produce a Smith vector for $mn$. In the commutative diagram of maps
in Figure~\ref{fig:tensor-prod-of-smith}, since both the rows are exact
and $P_{m,n}$ and $T(m,n)$ are isomorphisms, a straightforward diagram
chasing proves that $f_{m,n}$ is an isomorphism (see also \cite[Lemma~7.1]{LangAlg2002}). 
\begin{figure}[htbp]
\[
\begin{tikzcd}[xscale=0.4]
0 \ar{r} & \frac{\ZZ[X]}{\langle X^m-1 \rangle}\otimes
\frac{\ZZ[Y]}{\langle Y^n-1 \rangle} \ar{r}{\Psi_m
  \otimes \Psi_n}\ar{d}{P_{m,n}}&
{\bigoplus_{\substack{d_1 \mid m \\ d_2 \mid
    n}}}\frac{\ZZ[X]}{\langle \Phi_{d_1}(X) \rangle}\otimes
\frac{\ZZ[Y]}{\langle \Phi_{d_2}(Y)
  \rangle}\ar{r}{\overline{\Psi_m\otimes \Psi_n}}\ar{d}{T(m,n)}& G(m,n) \ar{r}\ar{d}{f_{m,n}}&0\\
0 \ar{r}& \frac{\ZZ[t]}{\langle t^{mn}-1 \rangle} \ar{r}{\Psi_{mn}}& {\bigoplus_{d \mid mn}} \frac{\ZZ[t]}{\langle\Phi_d(t)\rangle} \ar{r}{\overline{\Psi_{mn}}}& G(mn) \ar{r}& 0 
\end{tikzcd}
\]
\caption{Tensor Product of Smith Vectors}
\label{fig:tensor-prod-of-smith}
\end{figure}
In the next lemma, we consider the tensor product of Smith vectors in
the top row of Figure~\ref{fig:tensor-prod-of-smith} and study its
properties in the bottom row.
\begin{lem}\mbox{}
\label{lem:tensor-prod-of-smith-vectors-works}
 Let $m$ and $n$ be relatively prime positive integers. Let
 $\left\{(\mathfrak{p}^{(j)}): 0 \leqslant j \leqslant m-1\right\}$ and
 $\left\{(\mathfrak{q}^{(j)}) : 0 \leqslant j \leqslant n-1\right\}$ be Smith
 vectors for $m$ and $n$ respectively. Then, in the group $G(mn)$, 
\begin{enumerate}
\item \label{lem-pt-1:order}the order of the element 
$\overline{\Psi}_{mn}(\mathfrak{p}^{(i)}(\overline{t}^n)\mathfrak{q}^{(j)}(\overline{t}^m))$ is $e_m(i)e_n(j)$,
where $e_k(\iota)$ denotes the $\iota$th elementary divisor of $A_k$.
\item  \label{lem-pt-2:intersection}suppose that
  $\overline{\Psi}_{mn}(\mathfrak{p}^{(i_1)}(\overline{t}^n)
  \mathfrak{q}^{(j_1)}(\overline{t}^m)) $ and
  $\overline{\Psi}_{mn}(\mathfrak{p}^{(i_2)}(\overline{t}^n)\mathfrak{q}^{(j_2)}(\overline{t}^m))
  $ are non-zero. Then, the intersection 
\[\langle \overline{\Psi}_{mn}(\mathfrak{p}^{(i_1)}(\overline{t}^n) \mathfrak{q}^{(j_1)}(\overline{t}^m))\rangle \cap \langle \overline{\Psi}_{mn}(\mathfrak{p}^{(i_2)}(\overline{t}^n)\mathfrak{q}^{(j_2)}(\overline{t}^m))\rangle\]
of subgroups is non-trivial if and only if $i_1 = i_2$ and $j_1 = j_2$.  
\end{enumerate}
\end{lem}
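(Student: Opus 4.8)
The plan is to reduce both assertions to statements about the orders and ``overlaps'' of the basis elements of the cyclic summands produced by the Smith vectors for $m$ and $n$ separately, exploiting that $T(m,n)$ is a graded isomorphism and that $G(mn)$ decomposes, via Figure~\ref{fig:tensor-prod-of-smith}, as the image under $f_{m,n}$ of $G(m,n)$, which is the tensor product of the cokernels $G(m)$ and $G(n)$.

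For part~\ref{lem-pt-1:order}, I would first observe that $\Psi_m \otimes \Psi_n$ has cokernel $G(m) \otimes_{\ZZ} G(n)$ up to the natural maps, because tensoring the short exact sequence defining $G(m)$ with the free (hence flat) resolution coming from the sequence defining $G(n)$ and using that $\Psi_m$, $\Psi_n$ are injective with free cokernels-in-a-resolution sense, one gets that the cokernel of $\Psi_m \otimes \Psi_n$ is the cokernel of $\Psi_m \otimes \mathrm{id}$ followed by $\mathrm{id} \otimes \Psi_n$; more cleanly, since the sequences split after tensoring with the fraction field and all modules are free over $\ZZ$, the cokernel of the Kronecker/tensor map $A_m \otimes A_n$ has, by the diagonalisation $A_k \sim S(A_k)$, the elementary-divisor description $S(A_m) \otimes S(A_n)$ from Theorem~\ref{thm:smith-and-coprime}. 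In the Smith-vector coordinates, $\overline{\Psi_m \otimes \Psi_n}(\mathfrak{p}^{(i)} \otimes \mathfrak{q}^{(j)})$ generates a cyclic group of order exactly $e_m(i) e_n(j)$: the ``$a\cdot(\mathfrak{p}^{(i)} \otimes \mathfrak{q}^{(j)}) \in \Im(\Psi_m \otimes \Psi_n)$ iff $e_m(i)e_n(j) \mid a$'' claim follows from property~(2) of Smith vectors applied factor-by-factor, after writing $\Psi_m \otimes \Psi_n$ on the tensor-product of standard bases, whose matrix is $A_m \otimes A_n$. Then I transport along $f_{m,n}$ (an isomorphism) and along the ring isomorphisms $P_{m,n}$, $T(m,n)$: by the definition of $P_{m,n}$ and Lemma~\ref{lem:iso-of-quotients}, $T(m,n)$ sends $\mathfrak{p}^{(i)}(\overline{X}) \otimes \mathfrak{q}^{(j)}(\overline{Y})$ to $\mathfrak{p}^{(i)}(\overline{t}^{\,n}) \mathfrak{q}^{(j)}(\overline{t}^{\,m})$ componentwise, so the order of $\overline{\Psi}_{mn}(\mathfrak{p}^{(i)}(\overline{t}^{\,n})\mathfrak{q}^{(j)}(\overline{t}^{\,m}))$ equals the order of $\overline{\Psi_m \otimes \Psi_n}(\mathfrak{p}^{(i)} \otimes \mathfrak{q}^{(j)})$, namely $e_m(i)e_n(j)$.

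For part~\ref{lem-pt-2:intersection}, the ``if'' direction is immediate from part~(1) (the two subgroups coincide). For ``only if'', I would again work in $G(m) \otimes G(n)$ via the isomorphisms, writing $G(m) \cong \bigoplus_{e_m(i)>1} \ZZ/e_m(i)\ZZ$ with distinguished generators $\overline{\Psi}_m(\mathfrak{p}^{(i)})$ and similarly for $n$; then $G(m) \otimes G(n) \cong \bigoplus_{i,j} \ZZ/\!\gcd(e_m(i),e_n(j))\ZZ$ is a decomposition into cyclic summands indexed by the pairs $(i,j)$, \emph{but one must be careful}: the distinguished elements $\overline{\Psi}_{mn}(\mathfrak{p}^{(i)}(\overline t^{\,n})\mathfrak q^{(j)}(\overline t^{\,m}))$ need \emph{not} form a basis of $G(mn)$ (their orders multiply rather than gcd), so they are genuinely different from the standard cyclic generators. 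The key point is that $\gcd(e_m(i_1),e_n(j_1))=1$ for all relevant pairs since $e_m(i_1)$ is a product of primes dividing $m$ and $e_n(j_1)$ a product of primes dividing $n$, and $\gcd(m,n)=1$; hence the subgroup generated by $\overline{\Psi}_{mn}(\mathfrak p^{(i)}(\overline t^{\,n})\mathfrak q^{(j)}(\overline t^{\,m}))$ is internally the direct sum of its $p$-primary parts, the $p\mid m$ part being carried by the $i$-coordinate and the $p\mid n$ part by the $j$-coordinate. So the intersection of the subgroups for $(i_1,j_1)$ and $(i_2,j_2)$ is non-trivial iff it is non-trivial $p$-locally for some prime $p$; if $p \mid m$ this forces the $p$-parts of the two $i$-indexed cyclic factors to overlap, and by property~(2) of the Smith vector for $m$ (which says the $\mathfrak p^{(i)}$ with $e_m(i)>1$ generate \emph{independent} cyclic summands of $G(m)$) this forces $i_1=i_2$; symmetrically $p\mid n$ forces $j_1=j_2$. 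Since at least one prime divides $e_m(i_1)e_n(j_1)>1$, we get $i_1=i_2$ or $j_1=j_2$, and then reconsidering the other prime sets gives both equalities.

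The main obstacle I anticipate is bookkeeping the fact that the tensor products $\mathfrak p^{(i)}\otimes \mathfrak q^{(j)}$ do \emph{not} constitute a Smith vector for $mn$ — their orders are $e_m(i)e_n(j)$, not $\mathrm{lcm}$ or a single elementary divisor — so one cannot simply invoke ``property~(2) of Smith vectors'' for $mn$ directly; instead one must argue $p$-locally and use the independence of the cyclic summands inside $G(m)$ and inside $G(n)$ separately, together with the coprimality $\gcd(m,n)=1$ to keep the two families of primes from interacting. Formalising ``the subgroup generated by a single $\overline{\Psi}_{mn}(\mathfrak p^{(i)}(\overline t^{\,n})\mathfrak q^{(j)}(\overline t^{\,m}))$ splits as the direct sum over $p$ of its $p$-parts, with the $p\mid m$ part depending only on $i$ and the $p\mid n$ part only on $j$'' — essentially a Chinese-remainder statement at the level of the cyclic subgroups — is where the care is needed; everything else is transport of structure along the isomorphisms $P_{m,n}$, $T(m,n)$, $f_{m,n}$ already established.
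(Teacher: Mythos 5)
Your part~(1) is essentially the paper's argument: transport the tensor basis $\mathfrak{p}^{(i)}\otimes\mathfrak{q}^{(j)}$ and the scaled basis $e_m(i)e_n(j)\,\mathfrak{p}^{(i)}\otimes\mathfrak{q}^{(j)}$ of $\Im(\Psi_m\otimes\Psi_n)$ (obtained from the matrices $V_m,V_n$ of Lemma~\ref{lem:smith-vectors-are-columns-of-AV}) through the isomorphisms $P_{m,n}$, $T(m,n)$, $f_{m,n}$ of Figure~\ref{fig:tensor-prod-of-smith}; apart from the vague phrase ``applied factor-by-factor'', this is correct and is what the paper does.

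Part~(2) is where there is a genuine gap. First, the ambient group you propose to work in is vacuous: every elementary divisor $e_m(i)$ divides $m$ and every $e_n(j)$ divides $n$ (Proposition~\ref{pro:some-ele-div-stat}), so with $\gcd(m,n)=1$ all the numbers $\gcd(e_m(i),e_n(j))$ equal $1$ and $G(m)\otimes_{\ZZ}G(n)=0$; in particular $G(mn)$, whose order is $|\det A_m|^n|\det A_n|^m$, is not $G(m)\otimes G(n)$, so no argument can be ``carried out in'' that tensor product. Second, the pivotal $p$-local step --- that for $p\mid m$ the $p$-primary part of the cyclic subgroup of $G(mn)$ generated by $\overline{\Psi}_{mn}(\mathfrak{p}^{(i)}(\overline{t}^{\,n})\mathfrak{q}^{(j)}(\overline{t}^{\,m}))$ ``is carried by the $i$-coordinate'', so that overlapping $p$-parts force $i_1=i_2$ --- is never justified, and property~(2) of the Smith vector for $m$ cannot justify it: that property constrains which multiples of $\mathfrak{p}^{(i)}$ lie in $\Im(\Psi_m)$, i.e.\ it is a statement about $G(m)$, whereas your subgroups live in $G(mn)$ and you have set up no identification of their $p$-parts with summands of $G(m)$. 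Third, even granting that step, your conclusion only yields $i_1=i_2$ \emph{or} $j_1=j_2$: if the intersection happens to be non-trivial only at primes dividing $m$, ``reconsidering the other prime sets'' gives no information about $j_1,j_2$, since triviality of the intersection at primes dividing $n$ forces nothing. The repair is already contained in your part~(1): via $T(m,n)$ the vectors $\mathfrak{p}^{(i)}(\overline{t}^{\,n})\mathfrak{q}^{(j)}(\overline{t}^{\,m})$ form a $\ZZ$-basis of $\bigoplus_{d\mid mn}\ZZ[t]/\Phi_d(t)$, and by the commutativity of Figure~\ref{fig:tensor-prod-of-smith} together with Lemma~\ref{lem:smith-vectors-are-columns-of-AV} the scaled vectors $e_m(i)e_n(j)\,\mathfrak{p}^{(i)}(\overline{t}^{\,n})\mathfrak{q}^{(j)}(\overline{t}^{\,m})$ form a $\ZZ$-basis of $\Im(\Psi_{mn})$; hence $G(mn)$ is the direct sum over pairs $(i,j)$ of the cyclic groups generated by the classes of these vectors, and both assertions, in particular the ``only if'' of~(2), are immediate because distinct pairs index complementary direct summands. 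This is exactly the paper's proof; no $p$-local analysis is needed.
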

In this lemma, we interpret $\mathfrak{p}^{(j)}(\overline{t}^n)$ as the element
  $\bigoplus_{d \mid n}\mathfrak{p}_d^{(j)}(\overline{t}^n)$ which belongs to the
  direct sum $\bigoplus_{d \mid n}\ZZ[t]/\Phi_d(t)$. And, the
  product $\mathfrak{p}^{(i)}\mathfrak{q}^{(j)}$ is to be interpreted as the element
  $\bigoplus_{\substack{d_1 \mid m \\ d_2 \mid n}} \mathfrak{p}_{d_1}^{(i)}
  \mathfrak{q}_{d_2}^{(j)}$ which belongs to $\bigoplus_{\substack{d_1 \mid m \\
      d_2 \mid n}} \ZZ[t]/\Phi_{d_1d_2}(t)$. 
\begin{proof}
The set $\left\{\mathfrak{p}^{(i)}\otimes \mathfrak{q}^{(j)}: \substack{0
  \leqslant i \leqslant m-1\\ 0 \leqslant j \leqslant n-1}\right\}$ is
 a basis for $\bigoplus_{\substack{d_1 \mid m\\d_2 \mid
     n}}\frac{\ZZ[X]}{\Phi_{d_1}(X)} \otimes
 \frac{\ZZ[Y]}{\Phi_{d_2}(Y)}$ since $\mathfrak{p}^{(i)}$ and
 $\mathfrak{q}^{(j)}$ are Smith vectors for $m$ and $n$
 respectively. Since $T(m, n)$ is an isomorphism, we get that 
 the set $\left\{\mathfrak{p}^{(i)}(\overline{t}^n)\mathfrak{q}^{(j)}(\overline{t}^m): \substack{0
   \leqslant i \leqslant m-1\\ 0 \leqslant j \leqslant n-1}\right\}$ is
 a $\ZZ$-basis for the codomain of $\Psi_{mn}$. From the linear
independence of these vectors, \eqref{lem-pt-2:intersection}
 follows. 

By Lemma~\ref{lem:smith-vectors-are-columns-of-AV}, there are
isomorphisms $V_m$ and $V_n$ such that:
\begin{equation}
  \label{eq:pf-of-lemma-smith-works}
  A_mV_m(\overline{X}^i) = e_m(i) \mathfrak{p}^{(i)} \text{ and }  A_nV_n(\overline{Y}^j) = e_n(j) \mathfrak{q}^{(j)}
\end{equation}
for $0 \leqslant i \leqslant m-1$ and $0 \leqslant j \leqslant
n-1$. Since $\overline{X}^i \otimes \overline{Y}^j$ is a basis for
$\frac{\ZZ[X]}{(X^m-1)} \otimes \frac{\ZZ[Y]}{(Y^n-1)}$, we note that
$\mathfrak{p}^{(i)} \otimes \mathfrak{q}^{(j)}$ is a basis for the
image of $\Psi_m \otimes \Psi_n$. Now, using the fact that $T(m,n)$
and $P_{m,n}$ are isomorphisms and the commutativity of
Figure~\ref{fig:tensor-prod-of-smith}, we have the set $
 \{e_m(i)e_n(j)\mathfrak{p}^{(i)}(\overline{t}^n)\mathfrak{q}^{(j)}(\overline{t}^m): \substack{0
   \leqslant i \leqslant m-1\\ 0 \leqslant j \leqslant n-1}\}$ is a basis for
the image of $\Psi_{mn}$ from which \eqref{lem-pt-1:order} follows.
\end{proof}
\begin{rem}
\label{rem:general-k-1-k-2}
More generally, we note that if $k_1$ and $k_2$ are two relatively
prime positive integers, then, we may obtain the young diagram for
$k_1k_2$ by repeating $k_2$ times, the rows of the Young diagram for
$k_1$ and $k_1$ times, the rows of the Young diagram for $k_2$. To see
this, suppose that 
\begin{equation*}
k_1 = p_1^{\alpha_1} \dots p_r^{\alpha_r} \text{ and }
k_2= q_1^{\beta_1} \dots q_s^{\beta_s}
\end{equation*}
where $\alpha_i, \beta_j > 0$ and $\{p_i: 1 \leqslant i \leqslant r\}$
and $\{q_j: 1 \leqslant j \leqslant s\}$ are
disjoint. Then, in the $p_j$ tableau for $k_1$, the row with $e$ boxes (where $0
\leqslant e \leqslant \alpha_j$) appears
$\phi(p_j^{\alpha_j-e})k_1p_j^{-\alpha_j}$ times, while this
row appears $\phi(p_j^{\alpha_j-e})k_1k_2p_j^{-\alpha_j}$ in
the $p_j$ tableau for $k_1k_2$. This proves our claim. 
\end{rem}
From the lemma, we infer that tensoring Smith
vectors does not work, because $\left\{e_m(i)e_n(j): \substack{0
  \leqslant i \leqslant m-1\\ 0 \leqslant j \leqslant n-1}\right\}$ is
not the set of elementary divisors of $A_{mn}$. However, we  will see how to
get over this difficulty in the next section. We conclude this section
with the following lemma which we will need in the next section and is
interesting in its own right. 
\begin{lem}
  \label{lem:diagonals-condn-can-be-met}
  Let $P$ be a permutation matrix and $m$ and $n$ be relatively prime
  positive integers. Then, there are diagonal matrices $D_1$ and $D_2$
  satisfying:
\begin{align*}
\gcd(\det(D_1), m) &= 1,\\
\gcd(\det(D_2), n) &= 1 \text{ and} \\
\det(nD_1+mD_2P) &= 1.
\end{align*}
\end{lem}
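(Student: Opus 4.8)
We must find diagonal matrices $D_1, D_2$ (of size $mn \times mn$, with entries $\pm1$ say, or more generally integers) so that each $D_i$ has determinant coprime to the appropriate modulus and $\det(nD_1 + mD_2P) = 1$. The key observation is that the permutation matrix $P$ decomposes the index set $\{1, \dots, mn\}$ into disjoint cycles, and the matrix $nD_1 + mD_2P$ is block-diagonal (after simultaneously permuting rows and columns) with one block per cycle. For a cycle of length $\ell$ on indices $i_1 \to i_2 \to \dots \to i_\ell \to i_1$, the corresponding block is an $\ell \times \ell$ matrix of the form $n\,\mathrm{diag}(a_{i_1}, \dots, a_{i_\ell}) + m\,C$ where $C$ is the cyclic permutation matrix scaled by the entries $b_{i_k}$ of $D_2$ along the cycle. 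The determinant of such a block is, by expansion along the cycle, $n^\ell \prod_k a_{i_k} + (-1)^{\ell-1} m^\ell \prod_k b_{i_k}$ (up to a sign depending on conventions). So the whole determinant $\det(nD_1 + mD_2P)$ is a product, over cycles, of terms of the shape $n^\ell A_{\text{cyc}} \pm m^\ell B_{\text{cyc}}$ where $A_{\text{cyc}}, B_{\text{cyc}}$ are the products of the chosen diagonal entries around that cycle.

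First I would reduce to a single cycle: it suffices to choose, for each cycle of length $\ell$, values $A, B$ (products of the entries of $D_1$ and $D_2$ around that cycle) with $n^\ell A + (-1)^{\ell-1} m^\ell B = \pm 1$ (then adjust global signs, or absorb a sign into one more diagonal entry, to make the total product exactly $1$), subject to $A, B$ being allowed to be arbitrary nonzero integers because we can freely distribute a prescribed product among the $\ell$ free diagonal entries of a cycle. Since $\gcd(m, n) = 1$, we have $\gcd(m^\ell, n^\ell) = 1$, so by Bézout there exist integers $A, B$ with $n^\ell A + (-1)^{\ell-1} m^\ell B = 1$; moreover we may choose both $A$ and $B$ nonzero (if one Bézout solution has $A = 0$ then $m^\ell B = \pm 1$ forces $m = 1$, a degenerate case easily handled separately, and in any case shifting the solution by $(A,B) \mapsto (A + m^\ell, B + (-1)^\ell n^\ell)$ makes both nonzero). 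Then set the diagonal entries of $D_1$ along this cycle to be $(A, 1, 1, \dots, 1)$ and those of $D_2$ to be $(B, 1, 1, \dots, 1)$, so their cyclic products are exactly $A$ and $B$.

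Next I would check the coprimality conditions. We have $\det(D_1) = \prod_{\text{cycles}} A_{\text{cyc}}$ and $\det(D_2) = \prod_{\text{cycles}} B_{\text{cyc}}$. From the relation $n^\ell A_{\text{cyc}} + (-1)^{\ell-1} m^\ell B_{\text{cyc}} = \pm 1$ on each cycle we immediately read off $\gcd(A_{\text{cyc}}, m) = 1$ and $\gcd(B_{\text{cyc}}, n) = 1$: indeed any common prime divisor of $A_{\text{cyc}}$ and $m$ would divide $n^\ell A_{\text{cyc}}$ and $m^\ell B_{\text{cyc}}$ hence $\pm 1$. Taking products over cycles, $\gcd(\det(D_1), m) = 1$ and $\gcd(\det(D_2), n) = 1$, as required. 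Finally, $\det(nD_1 + mD_2P) = \prod_{\text{cycles}} \big(n^\ell A_{\text{cyc}} + (-1)^{\ell-1} m^\ell B_{\text{cyc}}\big) = \prod_{\text{cycles}} (\pm 1) = \pm 1$, and if this product is $-1$ we flip the sign of a single entry $A_{\text{cyc}_0}$ and $B_{\text{cyc}_0}$ in one cycle to flip the sign of that factor without disturbing the coprimality conditions (replacing the Bézout pair by its negative solves $n^\ell(-A) + (-1)^{\ell-1}m^\ell(-B) = -1$, so we instead pick the solution of $n^\ell A + (-1)^{\ell-1} m^\ell B = -1$ for that one cycle). Thus $\det(nD_1 + mD_2 P) = 1$.

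**Expected main obstacle.** The only genuinely delicate point is getting the signs to come out to exactly $+1$ rather than merely $\pm 1$, and cleanly tracking the sign $(-1)^{\ell-1}$ in the cycle-determinant formula — this is bookkeeping with the cofactor expansion of a "scaled cyclic" matrix, and one must be careful that the parities across different cycles combine correctly and that the correction step does not break coprimality (it does not, since negating a Bézout pair preserves the gcd conditions). The degenerate cases $m = 1$ or $n = 1$ should be dispatched first (if $n = 1$ take $D_1 = I$ and $D_2$ so that $\det(I + mD_2P) = 1$, which reduces to the same cycle analysis with the roles slightly simplified; if $m = 1$ symmetrically). Everything else — the block decomposition of $nD_1 + mD_2P$ over cycles of $P$, the freedom to prescribe the cyclic product of free diagonal entries, and the Bézout argument — is routine.
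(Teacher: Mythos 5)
Your proposal is correct and follows essentially the same route as the paper: reduce to the cycles of the permutation, observe that only the identity and the cycle itself contribute to the Leibniz expansion so each block has determinant $n^{\ell}\prod a + (-1)^{\ell-1}m^{\ell}\prod b$, and then choose a B\'ezout pair on each cycle (placed in a single diagonal slot, the rest being $1$'s). The only differences are cosmetic: since B\'ezout gives each block determinant exactly $1$, your final sign-correction step is not needed, and your explicit check of the gcd conditions and of the degenerate cases $m=1$ or $n=1$ is a small refinement the paper leaves implicit.
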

\begin{proof}
  Let $P$ be a $k \times k$ permutation matrix whose associated
  permutation is $\pi$. We shall present an algorithm to find integers $\{a_i\}_{i=1}^k$ and $\{b_j\}_{j=1}^k$ such that  
\[D_1 = \diag(a_1, \dots, a_k) \quad \text{and} \quad  
D_2 = \diag(b_1, \dots, b_k)\]
satisfy the requirements of lemma.
 
{\bfseries Step 1.} If $\pi$ is a $k$-cycle, then, one may compute $\det(nD_1+mD_2P)$
by the usual formula: 
\[\det(nD_1+mD_2P) = \sum_{\sigma \in S_k} (-1)^{\text{sgn}(\sigma)}
x_{1\sigma(1)}\dots x_{k\sigma(k)}\]
where $x_{ij}$ are the entries of the matrix $nD_1 + mD_2P$. 

For a permutation $\sigma$ 
\begin{equation}
\label{eq:contrib-perm}
x_{1\sigma(1)} \dots x_{k\sigma(k)}\neq 0 \Rightarrow \;\forall\; 1
\leqslant i \leqslant k \quad (\sigma(i) = i \text{ or } \sigma(i) =
\pi(i)).
\end{equation} 
Let $\sigma$ be not the identity permutation such that $x_{1\sigma(1)} \dots x_{k\sigma(k)}\neq 0$. By \eqref{eq:contrib-perm} there exists $j$
such that $\sigma(j) = \pi(j) \neq j$. Set $S = \{\sigma^\ell(j): 1
\leqslant \ell \leqslant k\}$. No element of $S$ is fixed
by $\sigma$, because if $\sigma(\sigma^\ell(j)) = \sigma^\ell(j)$,
then $\sigma(j) = j$, a contradiction. Thus, $\left.\sigma\right|_S =
\left.\pi\right|_S$. But, $\{\pi^\ell(j): 1 \leqslant \ell \leqslant
k\} = \{1, \dots, k\}$ since $\pi$ is a $k$-cycle. Since
$\sigma^\ell(j) = \pi^\ell(j)$, the set $S$ is all of
$\{1, \dots, k\}$. Thus, $\sigma$ must be $\pi$. Hence:
\[\det(nD_1+mD_2P) = n^k \prod_{l=1}^k a_l + (-1)^{k-1} m^k
\prod_{l=1}^k b_l.\] Since $\gcd(m^k, (-1)^{k-1}n^k) =1$, there exists $u$ and $v$ such that 
\begin{equation*} 
m^ku + (-1)^{k-1}n^k v = 1.
\end{equation*}
Then, it is easy to verify that the following choices 
  \begin{align*}
    a_{1} & = u\\
    a_{l} & = 1 \text{ for all } l \neq 1\\
    b_{1} & = v\\
    b_{l} & = 1\text{ for all } l \neq 1
  \end{align*}
  meet the requirements of the lemma.

{\bfseries Step 2.} If $\pi$ is not a cycle, let the cycle
decomposition of $\pi$ be $\pi_1\dots \pi_r$. We may now repeat Step 1
on each of the cycles $\pi_i$ and determine the scalars $a_j$ and
$b_j$ for those $j$  not fixed by $\pi_i$.   
\end{proof}
For an alternative proof of this lemma, see \cite{mo-permute}.
\section{An Algorithm for determining the Smith Vector for $n$}
\label{sec:algo-for-smith-vectors}
Given a positive integer $n$ and its
prime factorisation
\[n = p_1^{\alpha_1}\dots p_r^{\alpha_r},\]
we derived a formula for the elementary divisors of $\Psi_n$ using $r$ Young
tableaux, one for each prime $p_i$.  Now, we will use the same set of
tableaux diagrams to give an algorithm to find a Smith vector for $n$.  
The algorithm will use the following
three modules:  
\begin{description}
\item[$\boldsymbol{SV(p^e)}$] Construct a Smith vector for $p^e$, a prime power.
\item[$\boldsymbol{TSV(k_1, k_2, \mathfrak{p}, \mathfrak{q})}$] Given
  Smith vectors $\mathfrak{p}$ and $\mathfrak{q}$, respectively, for relatively prime
  positive integers $k_1$ and $k_2$, construct a Smith vector for $k_1k_2$.
\item[$\boldsymbol{SV(n)}$] Construct a Smith vectors for $n$ inductively on prime powers. 
\end{description}
 We will illustrate these modules with the example $n = 6$. We will
 calculate the bit complexity of each module in
 Section~\ref{ssec:time-complexity}. 

These algorithms are implemented in SAGE
 (\url{http://www.sagemath.org}) and the code is available from this
 Git (\url{http://github.com}) repository 
\begin{center}
\url{https://github.com/kamalakshya/Cyclotomy/} 
\end{center}
in the file \texttt{final\_smith\_form.sage}. We also calculated the Smith vector for $n$ between $2$ and $30$: this
data is available in the same repository in the file
\texttt{two\_thirty.pdf}.

\subsection{$\boldsymbol{SV(p^e)}$}
Recall from Lemma~\ref{lem:smith-vectors-are-columns-of-AV} that upto
scaling by elementary divisors, the columns of the matrix $W_n =
A_nV_n$ give a Smith vector for $n$. We have
established that $W_{p^e}$ has a simple recursive form (see Remark~\ref{rem:totality-of-col-W}). 
Therefore, a Smith vector $\operatorname{SV}(p^e)$ for $p^e$ can be
computed from this recursive formula. 
\begin{rem}
\label{rem:ht-of-prime-power-smith}
Observe that non-zero coefficients in the Smith
vector $\operatorname{SV}(p)$ are $\pm 1$. Since the non-zero entries of the
matrix $A_{p^e}$  are $\pm 1$, it follows by induction on $e$ that the
non-zero coefficients in $\operatorname{SV}(p^e)$ are $\pm 1$. We will
use this fact on the calculation of bit complexity of the algorithm. 
\end{rem}

{\bfseries Examples.} 
\begin{align}
  \operatorname{SV}(2) &= (\overline{1}\oplus 0, \overline{1} \oplus \overline{1})\label{eq:smith-vector-for-2}\\
  \operatorname{SV}(3) &= (\overline{1} \oplus 0, \overline{1} \oplus \overline{t}, \overline{1} \oplus \overline{1})\label{eq:smith-vector-for-3}
\end{align}

\subsection{$\boldsymbol{TSV(k_1, k_2, \mathfrak{p}, \mathfrak{q})}$}
\label{ssec:meat-of-algo}
 This module is the most crucial part of our algorithm. Let us first
 do a pictorial construction and make some observations about it. 

{\bfseries Construction.} To construct the tableaux diagram for $k_1k_2$
from the tableaux diagram for $k_1$ and that of $k_2$, we need
$k_2$ repetitions of the rows of the tableaux diagram for $k_1$ and
$k_1$ repetitions of the rows of the tableaux diagram for $k_2$ (See Remark~\ref{rem:general-k-1-k-2}).
Throughout this subsection, it will be convenient to assume that the
rows of the tableaux diagram for $n$ are numbered from bottom to top with
indices between $0$ and $n-1$. We will also index repetition of a row
from the bottom with indices  between $0$ and $k_i - 1$. So, the components of the Smith vector attached to the rows of the
tableaux diagram for $k_1$ and $k_2$ must change as follows:  
\begin{itemize} 
\item to the $j_1$th repetition of $i_1$th row of tableaux diagram of
  $k_1$, we attach the vector $k_2 \mathfrak{p}^{(i_1)}(\overline{t}^{k_2})
  \mathfrak{q}^{(j_1)} (\overline{t}^{k_1})$,
\item to the $i_2$th repetition of $j_2$th row of the tableaux diagram
  of $k_2$, we attach the vector $k_1 \mathfrak{p}^{(i_2)}(\overline{t}^{k_2}) \mathfrak{q}^{(j_2)} (\overline{t}^{k_1})$.
\end{itemize}
Here, we have multiplied by $k_i$ so that the image of the
corresponding element under $\overline{\Psi}_{mn}$ has order dividing
$k_{3-i}$. Finally, we will juxtapose these tableaux diagrams for
$k_1k_2$ so that the row indices match. This construction is
demonstrated in Figure~\ref{fig:gen-pic-const}.
\begin{sidewaysfigure}
\vspace*{12cm}
\hspace*{-1cm}
\begin{tikzpicture}[
decoration={markings,
    mark connection node=my node,
    mark=at position .5 with {\node[transform shape] (my node) {$...$};}
    }, every node/.style={font={\footnotesize},outer sep=0pt,inner sep=2pt}
]
\matrix[ampersand replacement = \&, matrix of math nodes, execute at empty cell = \node{\phantom{p_1}};] (m)
{
\& \phantom{p_1}                             \& \phantom{p_1}\&\phantom{p_1} \&  \&     \& \& \&k_1 \& \&  \& \& \& \&  \& \& \phantom{p_1}\&\phantom{p_1} \& \& \&   \&  \& k_2\& \& \& \\ %1
\&\text{Row Index}   \& \phantom{p_1}\& \phantom{p_1}\& \phantom{p_1} \&     \&       \& \& \&      \&  \& \&        \&  \& \& \& \&      \&  \& \& \& \& \\       %2     
\&     k_1k_2-1         \& \phantom{p_1}\& \phantom{p_1}\&\phantom{p_1}  \&  \phantom{p_1}    \&\phantom{p_1} \&\phantom{p_1} \&\phantom{p_1} \&   \phantom{p_1}   \&  \phantom{p_1}\& \phantom{p_1}\&\phantom{p_1}
\& \phantom{p_1} \&\phantom{p_1}\&k_2\mathfrak{p}^{(k_1-1)}(\overline{t}^{k_2})\mathfrak{q}^{(k_2-1)}(\overline{t}^{k_1})
\&\phantom{p_1} \& \phantom{p_1}\& \phantom{p_1}     \& \phantom{p_1} \& \phantom{p_1}\& \phantom{p_1}\& \phantom{p_1}\& \phantom{p_1}\& \phantom{p_1}\&\phantom{p_1} \& \phantom{p_1}\&\phantom{p_1}\&\phantom{p_1}\&\phantom{p_1} \& k_1\mathfrak{p}^{(k_1-1)}(\overline{t}^{k_2})\mathfrak{q}^{(k_2-1)}(\overline{t}^{k_1})\\   %3      
\&                             \& \& \&  \&     \& \& \& \&      \&  \& \& \&  \&     \&  \& \& \& \&      \&  \& \& \& \& \& \\ %4
\&  \vdots                \& \& \&  \&     \& \& \& \&      \&  \& \&
\&  \&     \&  \vdots\& \& \& \&      \&  \& \& \& \& \& \& \& \& \&
\& \vdots\\ %5
\&                             \& \&\phantom{p_1} \&  \&      \& \& \& \&      \&  \& \& \&  \&     \&  \& \& \& \phantom{p_1}\&\phantom{p_1}      \& \phantom{p_1} \& \phantom{p_1}\& \& \& \&\& \phantom{p_1}\& \phantom{p_1}\&\\ %6
\& \ell                            \& \& \&  \&     \& \& \& \&      \& \phantom{p_1} \& \phantom{p_1}\&\phantom{p_1} \&\phantom{p_1}
\&\phantom{p_1}\&  k_2\mathfrak{p}^{(i_1)}(\overline{t}^{k_2})\mathfrak{q}^{(j_1)}(\overline{t}^{k_1})   \&  \& \& \& \&      \&  \& \& \& \& \&\& \& \& \&k_1\mathfrak{p}^{(i_2)}(\overline{t}^{k_2})\mathfrak{q}^{(j_2)}(\overline{t}^{k_1})\\ %7
\&                             \& \& \&  \&      \& \& \& \&      \&  \& \& \&  \&     \&  \& \& \& \&      \&  \& \& \& \& \&\&\\ %8
\&                             \& \phantom{p_1}\& \&  \&      \& \& \&\cdots \&     \& \&  \&     \&  \& \& \& \&      \&  \&  \& \&\&\cdots \&\\ %9
\&    \vdots             \& \phantom{p_1}\&\phantom{p_1} \&  \phantom{p_1}\&    \phantom{p_1}  \& \phantom{p_1}\&\phantom{p_1} \& \phantom{p_1}\&      \&  \phantom{p_1}\& \&\phantom{p_1}  \&     \&  \& \& \& \&  \phantom{p_1}    \&  \phantom{p_1}\& \phantom{p_1}\& \phantom{p_1}\&\phantom{p_1} \&\phantom{p_1}
\phantom{p_1}\&\phantom{p_1}\& \& \& \&\\      %10
 \&                             \& \& \&  \&       \& \phantom{p_1}\& \&\phantom{p_1} \&      \&  \& \phantom{p_1}\&\phantom{p_1} \&  \phantom{p_1}\&    \phantom{p_1} \&  \& \& \& \&      \&  \& \& \& \& \& \\ %11
\&                             \& \phantom{p_1}\& \phantom{p_1}\& \phantom{p_1} \&  \phantom{p_1}    \& \phantom{p_1}\& \phantom{p_1}\& \phantom{p_1}\&  \phantom{p_1}    \&  \phantom{p_1}\& \& \&  \&     \&  \& \& \& \&      \&  \& \& \& \& \&\\ % 12
\&                             \& \& \&  \&      \& \& \& \&     \phantom{p_1} \&  \phantom{p_1}\&\phantom{p_1} \& \&  \&     \&  \phantom{p_1}\& \phantom{p_1}\& \phantom{p_1}\& \phantom{p_1}\&      \&  \& \& \& \& \&\\ %13
\&                             \& \& \&  \&      \& \& \& \&      \&
\& \& \&  \&     \&\vdots  \& \& \& \&      \&  \& \& \& \& \& \& \&
\& \& \& \vdots\\%14
\&\frac{k_1k_2}{p_1}\& \phantom{p_1}\& \phantom{p_1}\& \phantom{p_1} \&      \& \& \& \&      \&  \& \&        \&  \& \& \& \&      \&  \& \& \& \\             
\&                             \& \& \&  \&      \& \& \& \&      \&  \& \& \&  \&     \&  \& \& \& \&      \&  \& \& \& \& \&\\
\&                             \& \& \&  \&      \& \& \& \&      \&  \& \& \&  \&     \&  \& \& \& \&      \&  \& \& \& \& \&\\
\&     \vdots             \& \phantom{p_1}\&\phantom{p_1} \&  \phantom{p_1}\&      \& \& \& \&      \&  \& \& \&  \&     \&  \& \& \& \&      \&  \& \& \& \& \& \\
\&                             \& \& \&  \&      \& \& \& \&      \&  \& \& \&  \&     \&  \& \& \& \&      \&  \& \& \& \& \& \\
\&           1                \& \& \&  \&     \& \& \& \&      \&  \& \& \&  \&\&k_2\mathfrak{p}^{(0)}(\overline{t}^{k_2})\mathfrak{q}^{(1)}(\overline{t}^{k_1})     \&  \& \& \& \&      \&  \& \&  \& \& \& \& \& \& \&k_1\mathfrak{p}^{(1)}(\overline{t}^{k_2})\mathfrak{q}^{(0)}(\overline{t}^{k_1})\\
\&           0                \& \phantom{p_1}\&\phantom{p_1} \&  \phantom{p_1}\&      \& \& \& \&      \&  \& \& \&  \&\&k_2\mathfrak{p}^{(0)}(\overline{t}^{k_2})\mathfrak{q}^{(0)}(\overline{t}^{k_1})
\&  \& \& \& \&      \&  \& \& \& \& \& \& \& \& \&
k_1\mathfrak{p}^{(0)}(\overline{t}^{k_2})\mathfrak{q}^{(0)}(\overline{t}^{k_1})\\
\&                             \& \& \&  \&      \& \& \& \&      \&  \& \& \&  \&     \&  \& \& \& \&      \&  \& \& \& \& \&\\
\&                             \& \& \&  \&      \& \& \& \&      \&
\& \& \&  \& \phantom{p_1}    \& \phantom{p_1} \&\phantom{p_1} \& \& \&      \&  \& \& \& \& \&\\ %23
\&                             \& \& \&  \&      \& \& \& \&      \&  \& \& \&  \&     \&  \phantom{p_1}\&\phantom{p_1} \& \phantom{p_1}\& \phantom{p_1}\&      \&  \& \& \& \& \&\\
\&                             \& \& \&  \&      \& \& \& \&      \&  \& \& \&  \&     \&  \& \& \& \&      \&  \& \& \& \& \&\\
};
\draw[decorate] (m-3-4.north west) -- (m-15-4.south west);
\draw (m-15-4.south west) -- (m-15-4.south east) -- (m-15-4.north east);
\draw[dashed] (m-15-4.north east) -- (m-10-6.north east);
\draw (m-3-4.north west) -- (m-3-7.north east) -- (m-10-7.north east) -- (m-10-7.north west);

\draw[decorate] (m-3-11.north west) -- (m-12-11.south west);
\draw (m-12-11.south west) -- (m-12-11.south east) -- (m-12-11.north east);
\draw[dashed] (m-12-11.north east) -- (m-7-13.north east);
\draw (m-3-11.north west) -- (m-3-14.north east) -- (m-7-14.north east) -- (m-7-14.north west);

 \draw[decorate] (m-3-18.north west) -- (m-13-18.south west);
\draw (m-13-18.south west) -- (m-13-18.south east) -- (m-13-18.north east);
\draw[dashed] (m-13-18.north east) -- (m-6-20.north east);
\draw (m-3-18.north west) -- (m-3-21.north east) -- (m-6-21.north east) -- (m-6-21.north west);

 \draw[decorate] (m-3-25.north west) -- (m-10-25.south west);
 \draw (m-10-25.south west) -- (m-10-25.south east) -- (m-10-25.north east);
\draw[dashed] (m-10-25.north east) -- (m-6-27.north east);
\draw (m-3-25.north west) -- (m-3-28.north east) -- (m-6-28.north
east) -- (m-6-28.north west);
\draw (m-1-17) -- (m-23-17);
\draw (m-1-3) -- (m-21-3.south) -- +(0cm, -1cm);
 \end{tikzpicture}
 \caption{Pictorial Construction for $k_1$ and $k_2$}
 \label{fig:gen-pic-const}
 \end{sidewaysfigure}
In this tableaux diagram for $k_1k_2$
the vectors attached to the $\ell$th row are  $k_2 \mathfrak{P}_\ell :=
k_2 \mathfrak{p}^{(i_1)}(\overline{t}^{k_2})
  \mathfrak{q}^{(j_1)} (\overline{t}^{k_1})$ and $k_1
  \mathfrak{Q}_{\ell} := k_1
  \mathfrak{p}^{(i_2)}(\overline{t}^{k_2}) \mathfrak{q}^{(j_2)}
  (\overline{t}^{k_1})$ where $(i_1, j_1)$ and $(i_2, j_2)$ are
  uniquely determined by: 
  \begin{align} \label{eq:index-split}
    \left\{
    \begin{aligned}
    \ell = k_2i_1 &+ j_1 = k_1 j_2 + i_2\\
    0&\leqslant j_s \leqslant k_{3-s}-1, \quad s \in \{1, 2\}.
    \end{aligned}
    \right.
  \end{align}

{\bfseries Observations.} We now make the following observations about
the construction: 
\begin{enumerate}[ref=(\arabic*)]
\item The diagram in Figure~\ref{fig:gen-pic-const} is the tableaux diagram
  associated to $k_1k_2$. 
\item \label{obsns-pt-2}For $0 \leqslant \ell \leqslant k_1k_2 -1$, we have
  $e_{k_1k_2}(\ell) = e_{k_1}(i_1)e_{k_2}(j_2)$ where $i_1$ and $j_2$
  are determined by \eqref{eq:index-split}.
\begin{proof}
The $\ell$th elementary divisor of $k_1k_2$ is the product of the
entries in the $\ell$th row of Figure~\ref{fig:gen-pic-const}: clearly, the tableaux diagram for $k_1$ contributes $e_{k_1}(i_1)$ and
that of $k_2$ contributes $e_{k_2}(j_2)$. 
\end{proof}
\item \label{obsns-pt-3}With notations as in \ref{obsns-pt-2}, if we can find $d_{1,
    \ell}, d_{2, \ell}$ such that $\gcd(d_{1, \ell}, k_1) = \gcd(d_{2,
    \ell}, k_2) =1$ and
  $\{d_{1,\ell}k_2\mathfrak{P}_\ell+d_{2,\ell}k_1\mathfrak{Q}_\ell:0
  \leqslant \ell \leqslant k_1k_2-1\}$ is a basis for the codomain of
  $\Psi_{k_1k_2}$, then, $(d_{1,\ell}k_2\mathfrak{P}_\ell+d_{2,\ell}k_1\mathfrak{Q}_\ell:0
  \leqslant \ell \leqslant k_1k_2-1)$ is a Smith vector for $k_1k_2$. 
\begin{proof}
From Lemma~\ref{lem:tensor-prod-of-smith-vectors-works}, it is clear that the order of
$\overline{\Psi}_{k_1k_2}(k_1\mathfrak{P}_\ell)$ is $e_{k_1}(i_1)$ and that of
$\overline{\Psi}_{k_1k_2}(k_2\mathfrak{Q}_\ell)$ is $e_{k_2}(j_2)$. Since $\gcd(d_{s, \ell},
k_s) = 1$, the order of
$\overline{\Psi}_{k_1k_2}(k_s(\cdot)_\ell)$ remains unchanged after multiplication by $d_{s,
  \ell}$. Therefore, the order of
$\overline{\Psi}_{k_1k_2}(d_{1,\ell}k_1\mathfrak{P}_\ell+d_{2,\ell}k_2\mathfrak{Q}_\ell)$
equals $e_{k_1}(i_1)e_{k_2}(j_2)$. This equals $e_{k_1k_2}(\ell)$, by
\ref{obsns-pt-2} and we are done.  
\end{proof}
\item Define the following bijections between $[k_1k_2]$ and $[k_1]
  \times [k_2]$ where $[n] := \{0, 1,
  \dots, n-1\}$:
  \begin{align*}
    \iota(\ell) &= (i_1, j_1) \\
    \iota'(\ell) &= (i_2, j_2)  
  \end{align*}
where $(i_1, j_1)$ and $(i_2, j_2)$ are determined from $\ell$ using
\eqref{eq:index-split}. Let $L$ and $L'$ be total orderings on $[k_1] \times [k_2]$ defined by
transferring the order from $[k_1k_2]$ via $\iota$ and $\iota'$
respectively. Then, $L$ is in lexicographic order and $L'$ is in
reverse lexicographic order. Let $\sigma_{k_1, k_2}$ the
permutation $\iota'^{-1}\iota$ of the set $[k_1k_2]$.
\begin{eg}
  Take $k_1=2, k_2=3$. Then, $L$ and $L'$ are increasing along rows in
  the table below. 
  \begin{table}[htbp]
    \centering
    \begin{tabular}{c|cccccc}\hline
      $\ell$ & 0 & 1 & 2 & 3 & 4 & 5\\ \hline\hline
      $L$    & (0, 0) & (0, 1) & (0, 2) & (1, 0) & (1, 1) & (1, 2)\\\hline
      $L'$   & (0, 0) & (1, 0) & (0, 1) & (1, 1) & (0, 2) & (1, 2) \\ \hline
    \end{tabular}
  \end{table}
  In cycle notation, $\sigma_{2, 3} \equiv (1\ 2\ 4\ 3)$. 
\end{eg}
\item \label{obsn-pt-5}Let $\mathfrak{P} = (\mathfrak{P}_\ell : 0 \leqslant \ell
  \leqslant k_1k_2 -1)$ and $P$ be the permutation matrix
  $\sigma_{k_1, k_2}I$. By Lemma~\ref{lem:diagonals-condn-can-be-met},
  there are diagonal matrices $D_1$ and $D_2$ such that $\det(k_2D_1 +
  k_1D_2P) = 1$. Since $\{\mathfrak{P}_\ell: 0 \leqslant \ell
  \leqslant k_1k_2 - 1\}$ is a basis for the codomain of
  $\Psi_{k_1k_2}$, we have that the components of $(k_2D_1 +
  k_1D_2P)\mathfrak{P}^t$  forms a basis for the codomain of
  $\Psi_{k_1k_2}$. Now, setting $D_1 = \diag(d_{1, 1}, \dots, d_{1,
    k_1k_2-1})$ and $D_2 = \diag(d_{2, 1}, \dots, d_{2, k_1k_2-1})$, by
  \ref{obsns-pt-3}, we have that 
\[\operatorname{TSV}(k_1, k_2, \mathfrak{p}, \mathfrak{q}) := (d_{1,\ell}k_2\mathfrak{P}_\ell + d_{2,\ell} k_1 \mathfrak{Q}_\ell:
0 \leqslant \ell \leqslant k_1k_2-1)\]
is a Smith vector for $k_1k_2$. 
\end{enumerate}

\begin{algo}
Given the above observations, we are now ready
to present the algorithm for this module. 

\begin{description}
\item[Step 1] Construct the permutation matrix $P = \sigma_{k_1,
  k_2}I$. 
\item[Step 2] Construct diagonal matrices $D_1$ and
  $D_2$ as in the proof of Lemma~\ref{lem:diagonals-condn-can-be-met}.
\item[Step 3] Construct the vector $\mathfrak{P} = (\mathfrak{P}_\ell:
  0 \leqslant \ell \leqslant k_1k_2-1)$ by the formula:
\[\mathfrak{P}_\ell = \mathfrak{p}^{(i_1)}(\overline{t}^{k_2}) \mathfrak{q}^{(j_1)}(\overline{t}^{k_1}) \]
where $i, j$ and $\ell$ are related by \eqref{eq:index-split}. 
\item[Step 4] From the entries of the vector $k_2D_1 \mathfrak{P}^t+k_1D_2P \mathfrak{P}^t$,
  contruct the Smith vector $\operatorname{TSV}(k_1, k_2,
  \mathfrak{p}, \mathfrak{q})$ as in observation \ref{obsn-pt-5}. 
\end{description}
\end{algo}

{\bfseries Example.} Let us consider the example $k_1 = 2, k_2 =
3, \mathfrak{p} = \text{SV}(2)$ and $\mathfrak{q} = \text{SV}(3)$. Figure~\ref{fig:23} illustrates the construction we carried out
in this subsection. 
\begin{figure}[htbp]
\begin{tikzpicture}
\matrix[matrix of math nodes, execute at empty cell =
\node{\phantom{\oplus}};] (m){
&                           &[0.2cm]&k_1=2&[0.1cm]&&[0.1cm]&k_2=3&[0.1cm]&&\\[0.4cm]%1
\phantom{\oplus}&\text{Row Index} &[0.2cm]\phantom{\oplus}& \phantom{\oplus} &&&\phantom{\oplus} &&&[3cm] &\scriptstyle\text{TSV}(k_1, k_2,\mathfrak{p}, \mathfrak{q})&\phantom{\oplus}\\%2
\phantom{\oplus}&5                         &[0.2cm]&2&&\scriptstyle 3(\overline{1} \oplus \overline{1}
\oplus \overline{1} \oplus \overline{1})&&3&&\scriptstyle 2(\overline{1} \oplus \overline{1}
\oplus \overline{1} \oplus \overline{1}) &\scriptstyle \overline{1}\oplus \overline{1} \oplus \overline{1} \oplus \overline{1}&\phantom{\oplus}\\%3
\phantom{\oplus}&4                         &[0.2cm]&2&&\scriptstyle 3(\overline{1} \oplus \overline{1}
\oplus \overline{t}^2 \oplus \overline{t}^2)&&3&&\scriptstyle 2(\overline{1} \oplus \overline{1}
\oplus \overline{0} \oplus \overline{0})  &\scriptstyle \overline{5}\oplus\overline{3}\oplus-3\overline{t}-\overline{1}\oplus 3\overline{t}-\overline{3}&\phantom{\oplus}\\ %4
\phantom{\oplus}&3                         &[0.2cm]&2&&\scriptstyle 3(\overline{1} \oplus \overline{1}
\oplus \overline{0} \oplus \overline{0})& & &&\scriptstyle 2(\overline{1}\oplus \overline{0} \oplus
\overline{t}^2 \oplus \overline{0})&\scriptstyle \overline{5} \oplus \overline{5} \oplus -2\overline{t}-\overline{2}\oplus 2\overline{t}-\overline{2}&\phantom{\oplus}\\%5
\phantom{\oplus}&2                         &[0.2cm]&
&&\scriptstyle 3(\overline{1}\oplus\overline{0}\oplus\overline{1}\oplus\overline{0})& &  &&\scriptstyle 2(\overline{1} \oplus \overline{1}
\oplus \overline{t}^2 \oplus \overline{t}^2)&\scriptstyle \overline{5} \oplus \overline{0} \oplus -2\overline{t}+\overline{1}
\oplus \overline{0}&\phantom{\oplus}\\%6
\phantom{\oplus}&1                         &[0.2cm]&  &&\scriptstyle 3(\overline{1}\oplus \overline{0} \oplus
\overline{t}^2 \oplus \overline{0})& & &&\scriptstyle
2(\overline{1}\oplus\overline{0}\oplus\overline{1}\oplus\overline{0})&\scriptstyle\overline{13}\oplus \overline{10} \oplus
-3\overline{t}-\overline{3} \oplus \overline{0}&\phantom{\oplus}\\%7
\phantom{\oplus}&0                         &[0.2cm]\phantom{\oplus}&  &&\scriptstyle
3(\overline{1}\oplus\overline{0}\oplus\overline{0}\oplus\overline{0})& \phantom{\oplus}& \phantom{\oplus}&\phantom{\oplus}&\scriptstyle
2(\overline{1}\oplus\overline{0}\oplus\overline{0}\oplus\overline{0}) &\scriptstyle\overline{1}\oplus \overline{0}\oplus \overline{0}
\oplus \overline{0}&\phantom{\oplus}\\
};
\foreach \i in {3,...,8} {
\draw (m-\i-1.south west) -- (m-\i-12.south east);
};
\draw (m-2-1.south west) -- (m-2-12.south east);
\draw ([yshift=0.5mm]m-3-4.north west) -- (m-5-4.south west);
\draw ([yshift=0.5mm]m-3-4.north east) -- (m-5-4.south east);
\draw ([yshift=0.5mm]m-3-8.north west) -- (m-4-8.south west);
\draw ([yshift=0.5mm]m-3-8.north east) -- (m-4-8.south east);
\draw (m-2-3) -- (m-8-3.south);
\draw (m-2-7) -- (m-8-7.south);
\draw ([xshift=1mm]m-3-10.north east) -- ([xshift=1mm]m-8-10.south east);
\end{tikzpicture}
\caption{Pictorial Construction for $k_1 = 2$ and $k_2 = 3$}
\label{fig:23}
\end{figure}

\subsection{$\boldsymbol{SV(n)}$}This algorithm is a recursion using
the modules $\boldsymbol{SV(p^e)}$ and $\boldsymbol{TSV(k_1,k_2,
  \mathfrak{p}, \mathfrak{q})}$. 

\begin{algo}\mbox{}
\begin{description}
\item[Step 1] Factorise $n$ as $p_1^{e_1} \dots p_r^{e_r}$. For each
  $p_i^{e_i}$, calculate $\operatorname{SV}(p_i^{e_i})$. 
\item[Step 2] Having calculated $\operatorname{SV}(p_1^{e_1} \dots
  p_i^{e_i})$, we calculate $\operatorname{SV}(p_1^{e_1} \dots
  p_{i+1}^{e_{i+1}})$ by the formula 
\[\operatorname{SV}(p_1^{e_1} \dots
  p_{i+1}^{e_{i+1}}) = \operatorname{TSV}(p_1^{e_1} \dots
  p_i^{e_i}, p_{i+1}^{e_{i+1}},  \operatorname{SV}(p_1^{e_1} \dots
  p_i^{e_i}), SV(p_{i+1}^{e_{i+1}})).\] 
\end{description} 
\end{algo}

\subsection{Analysis of Algorithms}
\label{ssec:time-complexity}
To calculate the number of bit operations needed to output
$\operatorname{SV}(n)$, we compute the bit complexities of the modules
\[\boldsymbol{TSV(p_1^{e_1}\dots p_\ell^{e_\ell}, p_{\ell +1}^{e_{\ell+1}}
  \operatorname{SV}(p_1^{e_1}\dots p_\ell^{e_{\ell+1}}),
  \operatorname{SV}(p_{\ell +1}^{e_{\ell+1}}))}\]
for $l = 0, \dots, r-1$, where $n = p_1^{e_1}\dots p_r^{e_r}$ is the
prime factorisation of $n$. We observe that the bit complexity in the case
$l = r-1$ dominates and so the total bit complexity is atmost $r$ times
this complexity. 

Firstly, we need the following definitions and some
lemmas. 

Recall that for a polynomial $a(X) = \sum_{i=0}^n a_i X^i$ with integer
coefficients, its height $\hgt(a)$ is defined to be $\max\{|a_i|: 0 \leqslant i
\leqslant n\}$.   
\begin{defn}
Given a vector $(\mathfrak{p}^{(j)}: 0 \leqslant j \leqslant
n-1)$ having entries in $\bigoplus_{d\mid n}\ZZ[X]/\Phi_d(X)$ such that $\mathfrak{p}^{(j)} = \bigoplus_{d \mid n}
p_d^{(j)} \bmod{\Phi_d(X)}$ where $p_d^{(j)}$ is the unique
representative of degree atmost $\phi(d)-1$, we define its height to
be: 
\[\hgt((\mathfrak{p}^{(j)}: 0 \leqslant j \leqslant n-1)) =
\max\{\hgt(p_d^{(j)}): d \mid n, 0 \leqslant j \leqslant n-1\}.\] 
\end{defn}
\begin{note}
\label{note:nota-and-cpty}
Given a positive integer $n$, denote the bit
length of $n$ by $\mathcal{B}(n)$. Let $\mu(n_1, n_2)$ denote the
number of bit operations required to multiply a $n_1$-bit number with
an $n_2$-bit number. We will also set 
$\mu(n):=\mu(n, n)$.
\begin{itemize}
\item[(i)] We have $\mu(n_1, n_2) \leq \mu(n)$ where $n = \max(n_1, n_2)$. The standard multiplication
would suggest $\mu(n) = O( n^2)$ while FFT-methods in Sch{\"o}nage-Strassen algorithm \cite{SchStr} indicate that
$\mu(n) = O(n\log n \log\log n)$. This bound was
later improved by F{\"u}rer \cite{Furer:2007aa}. In contrast, addition
of a $n_1$-bit number to a $n_2$-bit number takes $O(n_1+n_2)$ bit operations. 
\item[(ii)] Recall that in Step~1 of the proof of
  Lemma~\ref{lem:diagonals-condn-can-be-met}, we calculate integers
  $u$ and $v$ such that $um^k + v(-1)^{k-1}n^k = 1$. By extended
  Euclidean algorithm, this computation takes $O(k^2\log(n)\log(m))$ bit
  operations. Also, $\mathcal{B}(u)$ and $\mathcal{B}(v)$ are
  $O(k\max(\log(n), \log(m)))$ (see \cite[Section~2.2]{Yap}).
\item[(iii)] Let $a(X), b(X) \in \mathbf{Z}[X]$ of degree $m$ and
  $n$ with bit length of the heights equal to $\tau_1$ and $\tau_2$
  respectively.  The standard polynomial multiplication algorithm to
  multiply $a(X)$ and $b(X)$ takes
  $O(mn\mu(\tau_1,\tau_2))$ bit operations. However, FFT based
  algorithms improves this to
  $O(d\log(d)\mu(\tau_1+\tau_2+\log d))$ where $d = \max(m, n)$. For more details, see \cite[Corollary~8.27]{MCA} and \cite[Lemma~17]{PanTsig14}.  
% \item[(iii)] Let $a(X), b(X) \in \ZZ[X]$ be polynomials of degree $m$ and $n$ respectively with $m\geq n$ and 
% $b$ monic. Algebraic complexity to calculate remainder of $a(x)$ on division by $b(X)$
% is bounded by a constant factor times the algebraic complexity to multiply these two polynomials \cite[Theorem~8.7]{AhoHopUlam2003}.
% So by FFT the remainder can be calculated in $O(m\log m )$ algebraic steps.  
\end{itemize}
\end{note}  
We begin with some lemmas:
\begin{lem}
\label{lem:ht-of-quorem}
Let $a(X), b(X) \in \ZZ[X]$ be such that $b(X)$ is monic and $\deg(a)
\geqslant \deg(b)$. Suppose that $\hgt(a) = M_1$ and $\hgt(b) =
M_2$. Let $a(X) = b(X)q(X) + r(X)$ where $q(X)$ and $r(X)$ are
quotient and remainder respectively. Then, 
\[\hgt(r(X)) \leqslant M_1 (1+M_2)^{\deg(a) - \deg(b)+1}.\]
\end{lem}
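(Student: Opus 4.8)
The plan is to unwind the classical long‑division algorithm one step at a time and control how the height of the running remainder grows. Put $n=\deg(a)$ and $m=\deg(b)$, and define a sequence of polynomials $a=a_0,a_1,a_2,\dots$ as follows: as long as $\deg(a_k)\geqslant m$, let $c_k$ be the leading coefficient of $a_k$ and set
\[
a_{k+1}=a_k-c_kX^{\deg(a_k)-m}b(X).
\]
Since $b$ is monic, this cancels the leading term of $a_k$, so $\deg(a_{k+1})<\deg(a_k)$. As $\deg(a_0)=n$ and each step strictly lowers the degree, after at most $n-m+1$ steps we arrive at a polynomial of degree $<m$; this polynomial is exactly $r(X)$, and the sum $\sum_k c_kX^{\deg(a_k)-m}$ of the multipliers is $q(X)$.

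First I would record the one‑step estimate. Writing $H_k=\hgt(a_k)$, we have $|c_k|\leqslant H_k$, hence $\hgt\bigl(c_kX^{\deg(a_k)-m}b(X)\bigr)\leqslant H_kM_2$; and since the height of a difference of two polynomials is at most the sum of their heights, $H_{k+1}\leqslant H_k+H_kM_2=H_k(1+M_2)$. Iterating this inequality gives $H_k\leqslant H_0(1+M_2)^k=M_1(1+M_2)^k$ for every $k$. Taking $k$ to be the number of steps actually performed before $r$ is produced, which is at most $n-m+1$, and using $1+M_2\geqslant 1$, we obtain $\hgt(r)=H_k\leqslant M_1(1+M_2)^{n-m+1}$, the claimed bound. (If $r=0$ the inequality is trivial.)

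I do not expect any real obstacle here; the only point needing a line of care is the claim that the number of division steps is at most $n-m+1$ even when some step drops the degree by more than one, but this is immediate from ``each step lowers the degree by at least one, we begin at degree $n$, and we stop once the degree is below $m$.'' One could instead phrase the argument as a formal induction on $n-m$, but the telescoping estimate above is the cleanest route.
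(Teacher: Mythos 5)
Your proof is correct and follows essentially the same route as the paper: track the height of the running remainder through the standard division algorithm, observe that each step multiplies the height by at most $1+M_2$, and note that at most $\deg(a)-\deg(b)+1$ steps occur. You simply spell out the details (the explicit one-step subtraction and the bound $|c_k|\leqslant H_k$) that the paper's brief argument leaves implicit.
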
 
Therefore, we have that $\mathcal{B}(\hgt(r(X)) = O(\log(M_1) +
(\deg(a))\log(M_2))$. 
\begin{proof}
  In each step of the Standard division algorithm, we subtract a
  multiple of $b(X)$ from $a(X)$ so as to reduce the degree of
  $a(X)$. Suppose that at the $i$th stage, we are left with a
  polynomial $a_i(X)$ of height $h_i$ where $0 \leqslant i \leqslant \deg(a) -
  \deg(b) +1$. Then, $h_i \leqslant h_{i-1} + M_2 h_{i-1}$ with $h_0 =
  M_1$. So, the height of $r(X)$ is atmost $M_1(1+M_2)^{\deg(a)-\deg(b)+1}$.
\end{proof}
\begin{lem}[{\cite[Theorem~21]{PanTsig14}}]
\label{lem:bit-comp-for-div}
 Let $a(X), b(X) \in \ZZ[X]$ be such that $b(X)$ is monic of degree
 $n$ and $\deg(a) \leqslant 2 \deg(b)$. Set $\mathcal{B}(\hgt(a)) = \tau_1$ and
 $\mathcal{B}(\hgt(b)) = \tau_2$. Then, the number of bit operations
 required to compute the quotient and remainder on dividing $a(X)$ by
 $b(X)$ is $O(n\log^2(n) \mu(n\tau_2 + \tau_1))$. 
\end{lem}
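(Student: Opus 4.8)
Since this lemma is quoted from \cite{PanTsig14}, I would reconstruct the standard proof, which is the classical Newton-iteration (Sieveking--Kung) approach to fast polynomial division, with the extra bookkeeping needed to control coefficient growth over $\ZZ$. Write $m = \deg(a)$, so $n \leqslant m \leqslant 2n$ (the case $m < n$ being trivial), and let $q, r \in \ZZ[X]$ be the quotient and remainder; these lie in $\ZZ[X]$ precisely because $b$ is monic. The first step is the reversal trick: for a polynomial $p$ set $\widetilde{p}(X) := X^{\deg p}p(1/X)$; then $a = bq + r$ yields $\widetilde{a} \equiv \widetilde{b}\,\widetilde{q} \pmod{X^{m-n+1}}$, hence $\widetilde{q} \equiv \widetilde{a}\,\widetilde{b}^{-1} \pmod{X^{m-n+1}}$. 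Since $\widetilde{b}$ has constant term equal to the leading coefficient of $b$, namely $1$, it is a unit in $\ZZ[[X]]$, so it suffices to compute $\widetilde{b}^{-1} \bmod X^{m-n+1}$, perform one multiplication to recover $\widetilde{q}$ (hence $q$ by reversal), and then one product $bq$ followed by a subtraction to recover $r = a - bq$.

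The truncated inverse is obtained by the Newton iteration $g_{j+1} \equiv g_j(2 - \widetilde{b}\,g_j) \pmod{X^{2^{j+1}}}$ started from $g_0 = 1$, which doubles the precision at each step; since only precision $m - n + 1 \leqslant n+1$ is needed, the number of iterations is $O(\log n)$, and each iteration is $O(1)$ multiplications and additions of polynomials of degree $O(n)$. The crux is bounding the bit-sizes of all intermediate coefficients. Writing $\widetilde{b} = 1 + c_1 X + \dots + c_n X^n$ with $|c_i| \leqslant \hgt(b)$, the expansion $\widetilde{b}^{-1} = \sum_{s \geqslant 0}(-1)^s(c_1 X + \dots + c_n X^n)^s$ together with the composition count $\binom{i-1}{s-1} \leqslant \binom{i}{s}$ shows that the coefficient of $X^i$ in $\widetilde{b}^{-1}$ is at most $(1+\hgt(b))^i$ in absolute value; since the Newton iterates $g_j$ are exactly the truncations of $\widetilde{b}^{-1}$, they obey the same bound. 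Hence every coefficient occurring in the iteration has bit-length $O(n\log\hgt(b)) = O(n\tau_2)$, and combining with the height estimate of Lemma~\ref{lem:ht-of-quorem} for $q$ and $r$, every coefficient arising throughout the computation has bit-length $O(\tau_1 + n\tau_2)$.

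It remains to tally the cost. By Note~\ref{note:nota-and-cpty}(iii), multiplying two polynomials of degree $O(n)$ whose coefficient heights have bit-length $\beta := O(\tau_1 + n\tau_2)$ costs $O(n\log n \cdot \mu(\beta + \beta + \log n)) = O(n\log n \cdot \mu(n\tau_2 + \tau_1))$ bit operations, the additive $\log n$ being absorbed since $\hgt(b) \geqslant 1$ forces $\beta = \Omega(n)$. There are $O(\log n)$ Newton steps, each bounded by this cost, plus $O(1)$ further multiplications to produce $q$ and $bq$; the subtractions, reversals, and truncations are dominated. This yields the claimed total of $O(n\log^2 n \cdot \mu(n\tau_2 + \tau_1))$.

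The part I expect to require the most care is not the algorithmic skeleton, which is classical, but precisely this coefficient-growth bookkeeping: over $\ZZ$ (as opposed to a field) the Newton iterates are not a priori of controlled size, and one must genuinely exploit the monic-ness of $b$ both to keep every object in $\ZZ[X]$ and to pin the coefficient bit-lengths at $O(n\tau_2 + \tau_1)$ rather than at something exponential in $n$; everything else is a direct application of the fast-multiplication bound recorded in Note~\ref{note:nota-and-cpty}.
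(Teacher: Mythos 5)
Your reconstruction is sound, but note that the paper itself offers no proof of this lemma: it is imported verbatim as a citation of Theorem~21 of the Pan--Tsigaridas reference, so there is no in-paper argument to compare against. Your route --- reversal, Newton iteration for the truncated inverse of $\widetilde{b}$ (valid over $\ZZ$ since $b$ monic makes the constant term of $\widetilde{b}$ equal to $1$), the $(1+\hgt(b))^i$ bound on the coefficients of $\widetilde{b}^{-1}$ giving bit-lengths $O(\tau_1+n\tau_2)$ throughout, and the tally of $O(\log n)$ multiplications each costing $O(n\log n\,\mu(n\tau_2+\tau_1))$ --- is the standard argument behind the cited result and does yield the stated bound $O(n\log^2 n\,\mu(n\tau_2+\tau_1))$, consistent with how the paper uses it in Lemma~\ref{lem:bit-comp-for-div-1}.
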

\begin{lem}
\label{lem:bit-comp-for-div-1}
Let $a(X), b(X) \in \ZZ[X]$ be of degree $m$ and $n$ respectively with
$m \geqslant n$. Suppose also that $b$ is monic. Set $\mathcal{B}(\hgt(a)) = \tau_1$ and
 $\mathcal{B}(\hgt(b)) = \tau_2$. Then, the number of bit operations
 required to compute the quotient and remainder on dividing $a(X)$ by
 $b(X)$ is $O(m\log^2(m) \mu(m\tau_2 + \tau_1))$. 
\end{lem}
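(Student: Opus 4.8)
The plan is to bootstrap Lemma~\ref{lem:bit-comp-for-div}, which handles the balanced case $\deg(a)\leqslant 2\deg(b)$, up to an arbitrary $m\geqslant n$ by running ordinary schoolbook long division of $a$ by $b$ in blocks of $n$ coefficients at a time; this is the usual device for reducing general polynomial division to the balanced case. After setting aside the degenerate case $n=0$ (then $b=1$ and there is nothing to compute), I would organise the division into $N:=\lceil (m-n+1)/n\rceil\leqslant m/n+1$ rounds processed from the top of $a$ downwards. In round $i$ one carries a dividend $\widetilde a_i\in\ZZ[X]$ of degree $<2n$, divides it by the monic $b$ to read off the next block of at most $n$ coefficients of the quotient $Q$ together with a remainder of degree $<n$, and then forms $\widetilde a_{i+1}$ by shifting that remainder up by $n$ places and appending the next $n$ hitherto-unused coefficients of $a$ into the freshly vacated low-degree slots. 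Apart from the single degree-$<2n$-by-degree-$n$ division, each round costs only $O(n)$ integer shifts and additions, which I expect to be negligible.

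The quantitative heart of the argument --- and the step I expect to be the main obstacle --- is a uniform bound on $\mathcal{B}(\hgt(\widetilde a_i))$, since that height feeds into the cost of the block division in round $i$. For this I would lean on Lemma~\ref{lem:ht-of-quorem}: one elementary reduction step of the division algorithm multiplies the running height by at most $1+\hgt(b)$, so after $k$ reduction steps starting from height $H$ the height is at most $H(1+\hgt(b))^{k}$. The essential observation is that the exponent $k$ accumulates \emph{additively} over the rounds rather than multiplicatively: each round performs at most $n$ reduction steps and there are at most $m/n+1$ rounds, so over the whole computation at most $m+n\leqslant 2m$ reduction steps occur, while the coefficients appended when a block is brought down never exceed $\hgt(a)$ in absolute value and occupy fresh degree slots. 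Hence every $\widetilde a_i$, and likewise every coefficient of $Q$ and of the final remainder $r(X)$ (which are, respectively, leading coefficients occurring during the process and the remainder produced in the final round, because $b$ is monic), satisfies $\hgt(\widetilde a_i)\leqslant\hgt(a)(1+\hgt(b))^{2m}$, so that $\mathcal{B}(\hgt(\widetilde a_i))=O(\tau_1+m\tau_2)$. This matches the a priori estimate one obtains by applying Lemma~\ref{lem:ht-of-quorem} directly to $a$ and $b$, which is a useful sanity check.

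With this bound in place the cost count is mechanical. Round $i$ divides a polynomial of degree $<2n=2\deg(b)$ whose height has bit length $O(m\tau_2+\tau_1)$ by the monic $b$ with $\mathcal{B}(\hgt(b))=\tau_2$, so Lemma~\ref{lem:bit-comp-for-div} bounds its cost by
\[O\!\bigl(n\log^2(n)\,\mu(n\tau_2+(m\tau_2+\tau_1))\bigr)=O\!\bigl(n\log^2(n)\,\mu(m\tau_2+\tau_1)\bigr),\]
using $n\leqslant m$. Summing over the $N\leqslant m/n+1$ rounds and using $Nn\leqslant m+n\leqslant 2m$ together with $\log^2(n)\leqslant\log^2(m)$ gives $O(m\log^2(m)\,\mu(m\tau_2+\tau_1))$ for the divisions, and the $O(m)$ auxiliary shifts and additions of integers of bit length $O(m\tau_2+\tau_1)$ contribute a further $O(m(m\tau_2+\tau_1))$ bit operations, which is absorbed into the previous bound. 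This yields the claimed estimate.
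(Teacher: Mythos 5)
Your argument is correct, and it reaches the stated bound by a genuinely different decomposition from the paper's. The paper keeps the dividend whole and instead pads the divisor: it divides $a$ by $X^{k_1}b$ with $k_1$ chosen so that the hypotheses of Lemma~\ref{lem:bit-comp-for-div} hold, then divides the resulting remainder $r_1$ by $X^{k_2}b$, and so on, so that the sizes decay geometrically ($\deg(r_i)\leqslant m/2^i$) and one sums $O(\log(m/n))$ calls of shrinking cost; the heights are controlled at each stage by Lemma~\ref{lem:ht-of-quorem}, giving $\mathcal{B}(\hgt(r_i))=O(m\tau_2+\tau_1)$, exactly as in your uniform bound. You instead run the classical blocked long division: $O(m/n)$ rounds, each a strictly balanced division of a degree-$<2n$ dividend by $b$ itself, with the remainder shifted up and the next block of coefficients of $a$ brought down. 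Your key point --- that the height growth accumulates additively across rounds (at most $m-n+1$ elementary reduction steps in total, each multiplying the bound by $1+\hgt(b)$, while the appended coefficients are bounded by $\hgt(a)$), so every intermediate dividend has height of bit length $O(m\tau_2+\tau_1)$ --- is sound, and it is legitimate to invoke Lemma~\ref{lem:ht-of-quorem} for this even though Lemma~\ref{lem:bit-comp-for-div} may compute the division by a fast method, since quotient and remainder are unique for monic $b$ and the cost of each call depends only on the input heights. Both routes lean on the same two ingredients (Lemmas~\ref{lem:ht-of-quorem} and \ref{lem:bit-comp-for-div}) and give the same total $O(m\log^2(m)\mu(m\tau_2+\tau_1))$: yours is the more standard textbook reduction and keeps every subproblem of uniform size $n$, at the price of $O(m/n)$ calls; the paper's divisor-padding scheme makes only logarithmically many calls of geometrically decreasing size, which would be advantageous if the per-call overhead were heavier, but asymptotically the two bookkeepings coincide here.
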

\begin{proof}
  If $m \leqslant 2n$, then the assertion follows from the
  Lemma~\ref{lem:bit-comp-for-div}. 

 So, let $m > 2n$. Choose the least $k_1$ such that the polynomials $a(X)$ and
 $X^{k_1}b(X)$ satisfy the hypotheses of the Lemma~\ref{lem:bit-comp-for-div}, that is,
  \[2(k_1+ n) \geqslant m \geqslant 2(k_1 - 1 + n).\] Now,
  proceeding as in Lemma~\ref{lem:bit-comp-for-div}, we may obtain
  polynomials $q_1(X)$ and $r_1(X)$ with $\deg(r_1) < k_1 + n$ such that 
  $a(X) = X^{k_1}b(X) q_1(X) + r_1(X)$.
  Now, if $\deg(r_1) < n$, then, $r_1$ is indeed the remainder on
  dividing $a(X)$ by $b(X)$. If not, we divide $r_1(X)$ by
  $X^{k_2}b(X)$ for $k_2$ chosen so that 
  \[2(k_2+n) \geqslant \deg(r_1) \geqslant 2(k_2+n-1).\]
  Continuing this way, we obtain the remainder $r(X)$ on dividing
  $a(X)$ by $b(X)$. 

  First note that $\deg(r_i) \leqslant k_i + n -1$. Since $k_1 + n - 1
  \leqslant \frac m 2$, and $k_i + n - 1 \leqslant
  \frac{k_{i-1}+n-1}2$ for all $i \geqslant 2$, we get
  \[\deg(r_i) \leqslant k_i+n-1 \leqslant \frac m{2^i}.\]

  Secondly, using Lemma~\ref{lem:ht-of-quorem}, we have: 
  \begin{align*}
    \mathcal{B}(\hgt(r_i)) &=
    O\bigg(\mathcal{B}(\hgt(r_{i-1}))+(\deg(r_{i-1})-(k_i+n)+1)\tau_2\bigg)\\
    &=O\bigg(\mathcal{B}(\hgt(r_{i-1}))+\frac{m}{2^{i-1}}\tau_2\bigg) \\
    &=O(\tau_2m+\tau_1)
  \end{align*}
  Using Lemma~\ref{lem:bit-comp-for-div}, the bit
  complexity in obtaining $q_i$ and $r_i$ is
  \[O((n+k_i)\log^2(n+k_i)\mu((n+k_i)\tau_2+m\tau_2+\tau_1)).\] 
  Thus, the total bit complexity is: 
  \begin{align*}
    &O\left(\sum_{i \geqslant 1}
      (n+k_i)\log^2(n+k_i)\mu((n+k_i)\tau_2+m\tau_2+\tau_1)\right) \\
    &= O\left(\sum_{i \geqslant 1} \frac m {2^i}\log^2(m)\mu(2m\tau_2+\tau_1)\right)\\
    &=O(m\log^2(m)\mu(m\tau_2+\tau_1)).
  \end{align*}
This completes the proof. 
\end{proof}
% \begin{cor}
% \label{cor:ht-of-quorem}
%  Consider $a(X), b(X)$ as defined in Lemma~\ref{lem:ht-of-quorem} with $\deg(a)=m$ and $\deg(b)=n$. Then the number of 
%  bit operations required to compute $r(X)$ is $O(m\log m~\mu(\log M_2, \log M_1 + m\log M_2))$.
% \end{cor}
% \begin{proof}
%  By Note~\ref{note:nota-and-cpty}~(iii), algebraic complexity to compute $r(X)$ is $O(m\log m)$. To find an upper bound 
%  for bit complexity, we need to multiply the algebraic complexity with $\mu(\mathcal{B}(\hgt(b(X))), \mathcal{B}(\hgt(q(X))))$.
%  From the proof of Lemma~\ref{lem:ht-of-quorem}, it is clear that $\hgt(q(X))=O( M_1 (1+M_2)^{\deg(a) - \deg(b)+1})$.
%  All these bounds together gives our required result.
% \end{proof}

We need the following estimate for coefficients of the cyclotomic
polynomials due to Bateman \cite{Bman}.
\begin{lem}[Bateman]
\label{lem:cyclo-coeff}
The height of the cyclotomic polynomial $\Phi_n$ is $O(\exp(n^{C/\log\log
  n}))$ for some absolute constant $C$.
\end{lem}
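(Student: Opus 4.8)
The plan is to bound the coefficients of $\Phi_n$ directly from the factorisation $\Phi_n(X) = \prod_{d \mid n}(X^d - 1)^{\mu(n/d)}$, following the classical argument. First I would separate this into a numerator and a denominator: put $F(X) = \prod_{d \mid n,\ \mu(n/d) = 1}(X^d - 1)$ and $G(X) = \prod_{d \mid n,\ \mu(n/d) = -1}(X^d - 1)$, so that $\Phi_n = F/G$. The divisors $d \mid n$ with $\mu(n/d) \neq 0$ are exactly those of the form $n/\prod_i p_i^{\varepsilon_i}$ with $\varepsilon_i \in \{0,1\}$ ranging over the $\omega(n)$ distinct primes $p_i \mid n$; there are $2^{\omega(n)}$ of them and, since $\mu(n/d) = (-1)^{\sum_i \varepsilon_i}$, exactly $s := 2^{\omega(n)-1}$ of them satisfy $\mu(n/d) = 1$ (assume $n \geqslant 2$, so $\omega(n) \geqslant 1$). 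Thus both $F$ and $G$ are products of $s$ binomials of the shape $X^d - 1$.

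Next I would bound each piece crudely. Expanding the product $F$, the coefficient of $X^k$ is a signed count of subsets of the $s$ exponents summing to $k$, hence $\hgt(F) \leqslant 2^s$. For $G$, note $G(0) = (-1)^s$, so $G$ is a unit in $\ZZ[[X]]$; from $(X^d - 1)^{-1} = -\sum_{j \geqslant 0} X^{jd}$ we get $G^{-1} = (-1)^s \prod_{\mu(n/d) = -1} \sum_{j \geqslant 0} X^{jd}$, and the coefficient of $X^k$ in $G^{-1}$ counts tuples of nonnegative integers $(j_d)$ with $\sum_d j_d\, d = k$; as each $j_d \leqslant k$, this count is at most $(k+1)^s$. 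Now $\Phi_n = F \cdot G^{-1}$ as power series and $\deg \Phi_n = \phi(n) < n$, so for $0 \leqslant k < n$ one has $[X^k]\Phi_n = \sum_{i \leqslant k} ([X^i]F)([X^{k-i}]G^{-1})$, a sum of at most $n$ terms each of absolute value at most $2^s n^s$. Therefore $\hgt(\Phi_n) \leqslant 2^s n^{s+1}$, whence $\log \hgt(\Phi_n) = O(s \log n) = O(2^{\omega(n)} \log n)$.

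Finally I would invoke the classical estimate $\omega(n) = O(\log n / \log\log n)$ (which follows from $p_1 p_2 \cdots p_{\omega(n)} \leqslant n$ together with a Chebyshev-type bound for $\theta$). This gives $2^{\omega(n)} = \exp(O(\log n / \log\log n)) = n^{O(1/\log\log n)}$, and since $\log n = \exp(\log\log n) \leqslant \exp(\log n / \log\log n) = n^{1/\log\log n}$ once $(\log\log n)^2 \leqslant \log n$, the factor $\log n$ is absorbed, leaving $\log \hgt(\Phi_n) = O(n^{C/\log\log n})$ for an absolute constant $C$. Equivalently $\hgt(\Phi_n) = O(\exp(n^{C/\log\log n}))$, which is the assertion.

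I expect the only mildly delicate points to be the bookkeeping in the two counting bounds — making sure the crude estimates $2^s$ and $(k+1)^s$ are applied to the right objects and that the identity $\Phi_n = F G^{-1}$ is legitimately read off coefficient-wise up to degree $\phi(n)$ — and the routine conversion of the intermediate $O(2^{\omega(n)} \log n)$ bound into the stated $\exp(n^{C/\log\log n})$ form; neither is a serious obstacle. Sharper forms of the estimate are known but are not needed for the applications here.
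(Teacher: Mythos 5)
Your argument is correct. Note first that the paper itself offers no proof of this lemma: it is quoted as Bateman's theorem with a citation to \cite{Bman}, so there is nothing internal to compare against. What you have written is essentially a self-contained reconstruction of Bateman's classical argument, and the details check out: the identity $\Phi_n(X)=\prod_{d\mid n}(X^d-1)^{\mu(n/d)}$ is the standard M\"obius inversion of $X^n-1=\prod_{d\mid n}\Phi_d$; the divisors with $\mu(n/d)\neq 0$ correspond to squarefree divisors of $n$, and for $n\geqslant 2$ exactly half of them, $s=2^{\omega(n)-1}$, carry each sign, so $F$ and $G$ each have $s$ binomial factors; $\hgt(F)\leqslant 2^s$ by the subset-sum expansion; $G$ has constant term $\pm 1$, hence is a unit in $\ZZ[[X]]$, and the coefficient of $X^k$ in $G^{-1}$ is (up to sign) a count of representations $\sum_d j_d d=k$, which is at most $(k+1)^s$; reading $\Phi_n=F\cdot G^{-1}$ coefficientwise up to degree $\phi(n)<n$ then gives $\hgt(\Phi_n)\leqslant 2^s n^{s+1}$, i.e.\ $\log\hgt(\Phi_n)=O(2^{\omega(n)}\log n)$; and the conversion via $\omega(n)=O(\log n/\log\log n)$ and the absorption of $\log n$ once $(\log\log n)^2\leqslant\log n$ are both sound. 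The only cosmetic point is at the very end: you obtain $\log\hgt(\Phi_n)\leqslant K\,n^{C/\log\log n}$ with an implied constant $K$, and the statement $\hgt(\Phi_n)=O(\exp(n^{C/\log\log n}))$ requires absorbing $K$ into the exponent; since $n^{\varepsilon/\log\log n}\to\infty$ for any $\varepsilon>0$, replacing $C$ by any larger constant does this, and as $C$ is unspecified in the lemma this is harmless. Your intermediate bound $\exp(O(2^{\omega(n)}\log n))$ is of the same strength as Bateman's $\exp(\tfrac12 d(n)\log n)$ for the purpose at hand (both reduce to the divisor-function estimate $n^{O(1/\log\log n)}$), and it comfortably yields the only consequence the paper uses, namely $\mathcal{B}(\hgt(\Phi_n))=O(n^{\epsilon})$ for every $\epsilon>0$.
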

Thus, $\mathcal{B}(\hgt(\Phi_{n}))$ is $O(n^{\epsilon})$ for any
$\epsilon > 0$. 
\begin{lem}
\label{lem:smith-coeff}
The bit length of the height of $\operatorname{SV}(n)$ is
$O(n^{1+\epsilon})$ for any $\epsilon > 0$. 
\end{lem}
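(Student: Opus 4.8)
The plan is to track the height of the Smith vector through the recursion $\operatorname{SV}(n_\ell) = \operatorname{TSV}(n_{\ell-1}, p_\ell^{e_\ell}, \operatorname{SV}(n_{\ell-1}), \operatorname{SV}(p_\ell^{e_\ell}))$, where $n_\ell = p_1^{e_1}\cdots p_\ell^{e_\ell}$, and to show that each of the $r-1$ applications of $\operatorname{TSV}$ enlarges the bit length of the height by at most $O(n^{1+\epsilon})$. Writing $H_\ell = \hgt(\operatorname{SV}(n_\ell))$, the base case is Remark~\ref{rem:ht-of-prime-power-smith}: the non-zero coefficients of the Smith vector of a prime power are $\pm 1$, so $H_1 = 1$; note that $\hgt(\operatorname{SV}(p_\ell^{e_\ell})) \leqslant 1$ for every $\ell$, and it is this asymmetry that will keep the product terms under control.

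For the inductive step, fix $\ell$ and set $k_1 = n_{\ell-1}$, $k_2 = p_\ell^{e_\ell}$, so that $k_1 k_2 = n_\ell \leqslant n$, and write $\mathfrak{p} = \operatorname{SV}(k_1)$, $\mathfrak{q} = \operatorname{SV}(k_2)$. Since the $\ell'$th entry of $\operatorname{TSV}(k_1, k_2, \mathfrak{p}, \mathfrak{q})$ is $d_{1,\ell'}k_2\mathfrak{P}_{\ell'} + d_{2,\ell'}k_1\mathfrak{Q}_{\ell'}$, I would first bound the height of $\mathfrak{P}_{\ell'} = \mathfrak{p}^{(i_1)}(\overline{t}^{k_2})\mathfrak{q}^{(j_1)}(\overline{t}^{k_1})$, the bound for $\mathfrak{Q}_{\ell'}$ being identical. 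As $\gcd(k_1, k_2) = 1$, every divisor $d$ of $k_1 k_2$ factors uniquely as $d = d_1 d_2$ with $d_1 \mid k_1$, $d_2 \mid k_2$, and by Lemma~\ref{lem:iso-of-quotients} the $d$-component of $\mathfrak{P}_{\ell'}$ is $T_{d_1,d_2}\bigl(p_{d_1}^{(i_1)}(X)\otimes q_{d_2}^{(j_1)}(Y)\bigr) = \sum_{a,b} c_a c_b'\,\overline{t}^{\,d_2 a + d_1 b}$, where $p_{d_1}^{(i_1)}(X) = \sum_a c_a X^a$ has degree $< \phi(d_1) \leqslant d_1$ and $q_{d_2}^{(j_1)}(Y) = \sum_b c_b' Y^b$ has degree $< \phi(d_2) \leqslant d_2$. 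As $\gcd(d_1, d_2) = 1$, the map $(a,b) \mapsto d_2 a + d_1 b$ is injective on $\{0, \dots, d_1-1\}\times\{0, \dots, d_2-1\}$, so before reduction this is a polynomial in $t$ of degree $< 2 d_1 d_2 \leqslant 2n$ of height at most $\hgt(\mathfrak{p})\hgt(\mathfrak{q}) \leqslant H_{\ell-1}$.

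Next I would pass to the reduction modulo the monic polynomial $\Phi_d(t)$: by Lemma~\ref{lem:ht-of-quorem} the height of the remainder is at most $H_{\ell-1}(1 + \hgt(\Phi_d))^{2n}$, and since $\mathcal{B}(\hgt(\Phi_d)) = O(n^\epsilon)$ by Bateman's estimate (Lemma~\ref{lem:cyclo-coeff}), the bit length of the height of each $d$-component of $\mathfrak{P}_{\ell'}$ is $\mathcal{B}(H_{\ell-1}) + O(n^{1+\epsilon})$. It then remains to absorb the scalar factors: $k_1, k_2 \leqslant n$ so $\mathcal{B}(k_i) = O(\log n)$, while by the construction of $D_1, D_2$ in the proof of Lemma~\ref{lem:diagonals-condn-can-be-met} together with Note~\ref{note:nota-and-cpty}(ii) each $d_{i,\ell'}$ is either $\pm 1$ or a B\'ezout coefficient for $k_1^k$ and $k_2^k$ with $k$ a cycle length at most $k_1 k_2 \leqslant n$, whence $\mathcal{B}(d_{i,\ell'}) = O(n\log n)$. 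Multiplication adds bit lengths and the final addition costs one further bit, so
\[
\mathcal{B}\bigl(\hgt(d_{1,\ell'}k_2\mathfrak{P}_{\ell'} + d_{2,\ell'}k_1\mathfrak{Q}_{\ell'})\bigr) \leqslant \mathcal{B}(H_{\ell-1}) + O(n^{1+\epsilon}).
\]
Maximising over $\ell'$ and over $d$ gives $\mathcal{B}(H_\ell) \leqslant \mathcal{B}(H_{\ell-1}) + O(n^{1+\epsilon})$; iterating over the $r-1 = \omega(n) - 1 = O(\log n)$ steps from $H_1 = 1$ yields $\mathcal{B}(H_r) = O(n^{1+\epsilon}\log n)$, and since $\epsilon > 0$ is arbitrary the factor $\log n$ is absorbed, giving $\mathcal{B}(\hgt(\operatorname{SV}(n))) = O(n^{1+\epsilon})$.

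The hard part is the height blow-up at the reduction modulo $\Phi_d(t)$: it is precisely there that Bateman's bound on the coefficients of cyclotomic polynomials, fed into Lemma~\ref{lem:ht-of-quorem}, is indispensable for keeping the error exponent below $n^{1+\epsilon}$; once that is in place, the accounting for the diagonal matrices and the factors $k_1, k_2$ is routine.
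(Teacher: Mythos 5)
Your proof is correct and follows essentially the same route as the paper: induct over the prime-power factors through the $\operatorname{TSV}$ recursion, bound the height of the unreduced product using the base-case fact that prime-power Smith vectors have coefficients $\pm 1$, control the blow-up from reduction modulo $\Phi_{d}$ via Lemma~\ref{lem:ht-of-quorem} together with Bateman's bound (Lemma~\ref{lem:cyclo-coeff}), and account for the factors $k_1, k_2$ and the B\'ezout entries of $D_1, D_2$ via Note~\ref{note:nota-and-cpty}(ii). Your only deviations are cosmetic refinements: the CRT-injectivity remark showing no genuine convolution occurs (the paper just uses the convolution bound, costing a harmless logarithm), and summing the additive $O(n^{1+\epsilon})$ growth over the $O(\log n)$ recursion steps rather than stating the bound directly by induction.
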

\begin{proof}
  Suppose that the prime factorisation of $n$ is $p_1^{e_1} \dots
  p_r^{e_r}$. We prove this result by induction on $r$. By
  Remark~\ref{rem:ht-of-prime-power-smith}, 
we have that 
\[\hgt(\operatorname{SV}(p_1^{e_1})) = O(1),\]
and therefore, the bit length
$\mathcal{B}(\hgt(\operatorname{SV}(p_1^{e_1}))$ of the height is
$O(1)$. 

Let $\mathfrak{p} :=
\operatorname{SV}\left(\frac{n}{p_r^{e_r}}\right)$ and $\mathfrak{q}
:= \operatorname{SV}(p_r^{e_r})$. The bit length of the height of the
polynomial
$\mathfrak{p}_{d_1}^{(i)}(t^{p_r^{e_r}})\mathfrak{q}_{d_2}^{(j)}(t^{np_r^{-e_r}})$
(before reducing modulo $\Phi_{d_1d_2}$) is \[O\left(\vphantom{\frac12}\log(\max_{j,
  d_2}(\deg(q_{d_2}^{(j)}))) + \mathcal{B}(\hgt(\mathfrak{p}))\right)\] since
the coefficients in the product is given by convolution.   By
induction hypothesis for the height of $\mathfrak{p}$, this
equals $O(\log p_r^{e_r} + (np_r^{-e_r})^{1+\epsilon})$ for any
$\epsilon > 0$. 

Using Lemmas~\ref{lem:ht-of-quorem} and \ref{lem:cyclo-coeff}, after
reduction $\bmod{\Phi_{d_1d_2}}$, we get:
\begin{align*}
  \mathcal{B}(\hgt(\mathfrak{p}_{d_1}^{(i)}(\overline{t}^{p_r^{e_r}})\mathfrak{q}_{d_2}^{(j)}(\overline{t}^{np_r^{-e_r}})))
  &= O(\log p_r^{e_r} + (np_r^{-e_r})^{1+\epsilon} + n\cdot
  n^{\epsilon}) \\ &= O(n^{1+\epsilon})
\end{align*}
Now, scaling the coefficients of $\mathfrak{p}$ and $\mathfrak{q}$ by
$p_r^{e_r}D_1$ and $np_r^{-e_r}D_2P$ as in Observation \ref{obsn-pt-5} of
Section~\ref{ssec:meat-of-algo} contributes to an addition of atmost
$O(n\log n)$ bits to the total height (see Note~\ref{note:nota-and-cpty}(ii)). This finishes the proof.  
\end{proof}

Now, we are ready to calculate the bit complexity of each of these
modules. We will use the soft-Oh notation $O^\sim(\cdot)$ which drops
out polylogarithmic factors. For functions $f, g: \mathbf{R}^s \to \mathbf{R}$, we say that $f
= O^\sim(g)$ if there is a constant $c > 0$ such that $f = O(g
\log^c(g))$. We will let $\epsilon$ be an arbitrary positive real
number. \newline

{\bfseries $\boldsymbol{SV(p^e)}$.} Since $W_{p^e}$ is given by a
recursive formula, calculation of Smith vector for $p^e$ takes
$O(p^{2e})$ steps.  

{\bfseries $\boldsymbol{TSV(k_1, k_2, \mathfrak{p}, \mathfrak{q})}$.}
We will calculate the bit complexity of this algorithm by going over
each step of the algorithm:
\begin{itemize}
\item In Step~1, the algorithm needs $O((k_1k_2)^2)$ steps. 
\item For Step~2, in
determining the diagonal matrices $D_1$ and $D_2$ as in Lemma~\ref{lem:diagonals-condn-can-be-met}, we need the cycles of
the permutation $\sigma_{k_1, k_2}$. The construction of
$\sigma_{k_1k_2}$ and its cycle decomposition takes $O((k_1k_2)^2)$
bit operations. Suppose that the cycle lengths of
$\sigma$ are $c_1, \dots, c_s$ so that $\sum_{i=1}^s c_i =
k_1k_2$. By Note~\ref{note:nota-and-cpty}~(ii), the bit complexity of this step is 
\[O\left((k_1k_2)^2+\sum_{i=1}^s c_i^2 \log(k_1) \log(k_2)\right) =
O^\sim\left((k_1k_2)^2\right).\] 
\item In Step~3, to construct $\mathfrak{P}_\ell$, we need to find
$\mathfrak{p}_{d_1}^{(i_1)}(t^{k_2})\mathfrak{q}_{d_2}^{(j_1)}(t^{k_1})
\bmod{\Phi_{d_1d_2}(t)}$ for every $d_1 \mid k_1$ and $d_2 \mid
k_2$. Since $\mathfrak{p}_{d_1}^{(i_1)}$ and
$\mathfrak{q}_{d_2}^{(j_1)}$ has degree atmost $\phi(d_1)$ and $\phi(d_2)$
respectively, the standard polynomial multiplication (Note~\ref{note:nota-and-cpty}~(iii))
costs $O(\phi(d_1d_2)(\mu_h(\mathfrak{p}, \mathfrak{q})+\log(\phi(d_1d_2))))$ bit operations where
\[\mu_h(\mathfrak{p}, \mathfrak{q} ) :=
\mu(\mathcal{B}(\hgt(\mathfrak{p})),
\mathcal{B}(\hgt(\mathfrak{q}))).\] To reduce modulo
$\Phi_{d_1d_2}(t)$, using Lemma~\ref{lem:bit-comp-for-div-1} and Lemma~\ref{lem:cyclo-coeff}  
we need 
$O^\sim\big(\beta(d_1, d_2)^2(d_1d_2)^\epsilon+\beta(d_1, d_2)\mu_h(\mathfrak{p},
\mathfrak{q})\big)$
bit operations where $\beta(d_1, d_2) =
k_1\phi(d_2)+k_2\phi(d_1)$. Since $\phi(d_1d_2) \leqslant \beta(d_1,
d_2)$, the total bit complexity involved in finding
$\mathfrak{p}_{d_1}^{(i_1)}(t^{k_2})\mathfrak{q}_{d_2}^{(j_1)}(t^{k_1})
\bmod{\Phi_{d_1d_2}(t)}$ is
\begin{align*}
O^\sim\big(\beta(d_1, d_2)^2(d_1d_2)^\epsilon+\beta(d_1, d_2)\mu_h(\mathfrak{p},
\mathfrak{q})\big). 
\end{align*}
Now, we have:
\begin{align*}
\sum_{\substack{d_1 \mid k_1 \\ d_2 \mid k_2}} \beta(d_1, d_2) &=
k_1k_2(\tau(k_1)+\tau(k_2)) \\
&= O((k_1k_2)^{1+\epsilon}) \\
\intertext{ since $\tau(n)$ (the number of divisors of $n$) is
  $O(n^{1+\epsilon})$ (\cite[Theorem~315]{hardy1979introduction}). Also}
\sum_{\substack{d_1 \mid k_1 \\ d_2 \mid k_2}} \beta(d_1, d_2)^2
&\leqslant \Big(\sum_{\substack{d_1 \mid k_1 \\ d_2 \mid k_2}} \beta(d_1, d_2)\Big)^2.
\end{align*}

Thus, the bit complexity to compute $\mathfrak{P}_\ell$ is:   
\begin{align*}
  &\quad O^\sim\Big(\sum_{\substack{d_1 \mid k_1 \\ d_2
        \mid k_2}} \beta(d_1, d_2)^2(d_1d_2)^\epsilon+\beta(d_1, d_2)\mu_h(\mathfrak{p},
\mathfrak{q})\Big) \\
&= O^\sim\Big((k_1k_2)^\epsilon\sum_{\substack{d_1 \mid k_1 \\ d_2
        \mid k_2}} \beta(d_1, d_2)^2+\mu_h(\mathfrak{p},
\mathfrak{q})\sum_{\substack{d_1 \mid k_1 \\ d_2
        \mid k_2}}\beta(d_1, d_2)\Big) \\
&= O^\sim\Big((k_1k_2)^{2+\epsilon}+\mu_h(\mathfrak{p}, \mathfrak{q})(k_1k_2)^{1+\epsilon}\Big)
\end{align*}
So the total bit complexity in Step~3 is
  \begin{align*}
    O^\sim((k_1k_2)^{3+\epsilon}+\mu_h(\mathfrak{p},
    \mathfrak{q})(k_1k_2)^{2+\epsilon}).
    \end{align*}
\item Using the facts in Note~\ref{note:nota-and-cpty}~(ii), one may prove that the bit lengths of the entries of the matrices $k_2D_1$ and
  $k_1D_2$ are $O^\sim(k_1k_2)$. Since there are
  $(k_1k_2)^2$ terms in $\vec{\mathfrak{P}}$, calculating $(k_2D_1 +
  k_1D_2P)\mathfrak{P}^t$, which is the Step~4, needs
  $O^\sim((k_1k_2)^{2+\epsilon}\mu_h(\mathfrak{p}, \mathfrak{q})+(k_1k_2)^{3+\epsilon})$ bit operations. 
\end{itemize} 
Summing the bit complexities of each step, we conclude that the bit
complexity of this module is 
\begin{equation}
\label{eq:tsv}
O^\sim((k_1k_2)^{2+\epsilon}\mu_h(\mathfrak{p},
\mathfrak{q})+(k_1k_2)^{3+\epsilon}) \text{ for any } \epsilon >
0. 
\end{equation}
{\bfseries $\boldsymbol{SV(n)}$.}  The bit complexity of Step~1 is
$O(n^2)$. For Step~2, the maximum bit operations are required in the last step of the
recursion. We know that the bit complexity of module
$\boldsymbol{TSV(\frac{n}{p_r^{e_r}}, p_r^{e_r},
  \operatorname{SV}(\frac{n}{p_r^{e_r}}), \operatorname{SV}(p_{r}^{e_{r}}))}$ is:
\begin{align}
\label{eq:sv-n-cplxty}
  &\quad O^\sim\left(n^{2+\epsilon}\mu_h\Big(\operatorname{SV}\big(\frac{n}{p_r^{e_r}}\big),\operatorname{SV}(p_r^{e_r})\Big)+n^{3+\epsilon}\right)
  && \text{using } \eqref{eq:tsv}
  \notag\\ 
  &=
  O^\sim\left(n^{2+\epsilon}\mu\Big(\frac{n}{p_r^{e_r}},p_r^{e_r}\Big)+n^{3+\epsilon}\right)
 && \text{by Lemma~\ref{lem:smith-coeff}}
  \notag \\
  &= O^\sim(n^{3+\epsilon}) 
\end{align}
Since $r = O(\log(n))$, the bit
complexity of the module
$\boldsymbol{SV(n)}$ is $O(n^{3+\epsilon})$ for any $\epsilon > 0$.  

\subsubsection{Space Complexity}
Note that the bit length of an integer that appears while executing
the module $\boldsymbol{SV(n)}$ is $O(n^{1+\epsilon})$ for any
$\epsilon > 0$. The maximum
space is needed to store a Smith vector for $n$. Since a Smith vector has $O(n^2)$
terms, the space complexity of $\operatorname{SV}(n)$ is
$O(n^{3+\epsilon})$ for any $\epsilon > 0$. 

We summarise the above discussion in the following theorem:
\begin{thm}
\label{thm:main-algo}
  Given a positive integer $n$, the algorithm $\boldsymbol{SV(n)}$
  gives a Smith vector for $n$. The bit complexity and space
  complexity of this
  algorithm are both $O(n^{3+\epsilon})$ for any $\epsilon > 0$. 
\end{thm}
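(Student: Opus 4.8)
The plan is to verify correctness of the three modules in the order in which they are composed, and then to collate, module by module, the bit‑ and space‑complexity estimates set up in Section~\ref{ssec:time-complexity}. For $\operatorname{SV}(p^e)$: by Lemma~\ref{lem:smith-vectors-are-columns-of-AV} a Smith vector for $p^e$ is obtained, upon dividing the $j$th column by the elementary divisor $e_{p^e}(j)$, from the columns of $W_{p^e}=A_{p^e}V_{p^e}$, and Remark~\ref{rem:totality-of-col-W} records that $W_{p^e}$ is produced by the recursion \eqref{eq:W_p_e} with $W_1=(1)$; hence the module outputs a genuine Smith vector, whose nonzero coefficients are $\pm 1$ by Remark~\ref{rem:ht-of-prime-power-smith}. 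For $\operatorname{TSV}(k_1,k_2,\mathfrak{p},\mathfrak{q})$ with $\gcd(k_1,k_2)=1$: Lemma~\ref{lem:tensor-prod-of-smith-vectors-works} identifies, for each $\ell$, the orders of $\overline{\Psi}_{k_1k_2}(k_1\mathfrak{P}_\ell)$ and $\overline{\Psi}_{k_1k_2}(k_2\mathfrak{Q}_\ell)$ in $G(k_1k_2)$ and the independence of the cyclic subgroups they generate; Observations~\ref{obsns-pt-2} and~\ref{obsns-pt-3} then show that any combination $d_{1,\ell}k_2\mathfrak{P}_\ell+d_{2,\ell}k_1\mathfrak{Q}_\ell$ with $\gcd(d_{s,\ell},k_s)=1$ has image of order exactly $e_{k_1k_2}(\ell)$ as soon as the whole tuple is a $\ZZ$‑basis of the codomain of $\Psi_{k_1k_2}$; and Lemma~\ref{lem:diagonals-condn-can-be-met} applied to $P=\sigma_{k_1,k_2}I$ produces diagonal $D_1,D_2$ with $\det(k_2D_1+k_1D_2P)=1$, which secures the basis condition (Observation~\ref{obsn-pt-5}). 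Thus $\operatorname{TSV}(k_1,k_2,\mathfrak{p},\mathfrak{q})$ is a Smith vector for $k_1k_2$.

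Correctness of $\operatorname{SV}(n)$ then follows by induction on the number $r$ of distinct prime factors of $n=p_1^{e_1}\cdots p_r^{e_r}$: the base case is the output of $\operatorname{SV}(p_1^{e_1})$, and in the inductive step $\operatorname{SV}(p_1^{e_1}\cdots p_i^{e_i})$ is combined with $\operatorname{SV}(p_{i+1}^{e_{i+1}})$ via $\operatorname{TSV}$, the two arguments being coprime, so by the preceding paragraph the output is a Smith vector for $p_1^{e_1}\cdots p_{i+1}^{e_{i+1}}$; taking $i=r-1$ gives the first assertion of the theorem.

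For the complexity, by Lemma~\ref{lem:smith-coeff} every integer arising during $\operatorname{SV}(n)$ has bit length $O(n^{1+\epsilon})$. Stepping through $\operatorname{TSV}(k_1,k_2,\mathfrak{p},\mathfrak{q})$: Step~1 (building $P$) costs $O((k_1k_2)^2)$; Step~2 (building $D_1,D_2$ from the cycle decomposition of $\sigma_{k_1,k_2}$ via the extended Euclidean algorithm) costs $O^\sim((k_1k_2)^2)$; Step~3 (forming $\mathfrak{p}_{d_1}^{(i_1)}(t^{k_2})\mathfrak{q}_{d_2}^{(j_1)}(t^{k_1})$ and reducing modulo $\Phi_{d_1d_2}$, using Lemmas~\ref{lem:bit-comp-for-div-1} and~\ref{lem:cyclo-coeff} together with $\sum_{d_1\mid k_1,\,d_2\mid k_2}\beta(d_1,d_2)=k_1k_2(\tau(k_1)+\tau(k_2))=O((k_1k_2)^{1+\epsilon})$) costs $O^\sim((k_1k_2)^{3+\epsilon}+\mu_h(\mathfrak{p},\mathfrak{q})(k_1k_2)^{2+\epsilon})$; and Step~4 (forming $(k_2D_1+k_1D_2P)\mathfrak{P}^t$) costs the same order, yielding the bound \eqref{eq:tsv} for $\operatorname{TSV}$. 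In $\operatorname{SV}(n)$ the dominant call is the final one, $\operatorname{TSV}(n/p_r^{e_r},\,p_r^{e_r},\,\operatorname{SV}(n/p_r^{e_r}),\,\operatorname{SV}(p_r^{e_r}))$, which by \eqref{eq:tsv}, $\mu(a,b)\leqslant\mu(\max(a,b))$ and Lemma~\ref{lem:smith-coeff} costs $O^\sim(n^{3+\epsilon})$ as in \eqref{eq:sv-n-cplxty}; since there are $r=O(\log n)$ such calls the total bit complexity is $O(n^{3+\epsilon})$. For the space bound, a Smith vector has $O(n^2)$ components, each a polynomial whose coefficients have bit length $O(n^{1+\epsilon})$ by Lemma~\ref{lem:smith-coeff}, and no intermediate object is larger, so $O(n^{3+\epsilon})$ bits suffice.

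The main obstacle is the Step~3 estimate: one must control the growth of coefficient heights under the substitutions $t\mapsto t^{k_i}$, the polynomial multiplication, and — above all — the reduction modulo $\Phi_{d_1d_2}$, where a naive bound would be exponential. This is exactly the point at which Lemma~\ref{lem:smith-coeff} (resting in turn on Bateman's estimate, Lemma~\ref{lem:cyclo-coeff}) and the divisor‑sum bounds are needed, and where one must check that the accumulated $n^\epsilon$ and polylogarithmic factors still collapse into the claimed $O(n^{3+\epsilon})$.
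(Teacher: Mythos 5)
Your proposal is correct and follows essentially the same route as the paper: correctness of $\boldsymbol{SV(p^e)}$ via Lemma~\ref{lem:smith-vectors-are-columns-of-AV} and the recursion \eqref{eq:W_p_e}, correctness of $\boldsymbol{TSV}$ via the observations of Section~\ref{ssec:meat-of-algo} together with Lemma~\ref{lem:diagonals-condn-can-be-met}, induction on the prime-power factors for $\boldsymbol{SV(n)}$, and the same step-by-step complexity accounting (Lemma~\ref{lem:smith-coeff}, Lemmas~\ref{lem:bit-comp-for-div-1} and \ref{lem:cyclo-coeff}, the divisor-sum bound, and the dominance of the final $\boldsymbol{TSV}$ call with $r=O(\log n)$). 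Nothing essential is missing; this is exactly the content the paper summarises in Theorem~\ref{thm:main-algo}.
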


We may compare this theorem with the results in the literature in this
direction. It appears to us that the best known algorithm for
determining Smith normal form $S(A)$ of an integer matrix $A$ and the
unimodular transforming matrices $U$ and $V$ are due to Arne Storjohann 
in his PhD thesis \cite{ASThesis}. Let $O(n^\theta)$ be the algebraic
complexity involved in multiplying two $n \times n$ matrices with
integer entries; best known algorithms give $2 < \theta \leqslant 3$
(for example,  V. Vassilevska Williams gives an algorithm with $\theta = 2.373$
in \cite{MMultVV}). He proves:

\begin{thm*}
[{\cite[Proposition~7.20]{ASThesis}}] 
For a $n \times m$ matrix $A=(a_{ij})$ of rank $r$ with integer entries, the Smith normal form $S(A)$ and the
unimodular transforming matrices $U$ and $V$ may be obtained in
$O^\sim(nmr^{\theta-1}\log \|A\| + nm\mu(r\log\|A\|))$ bit
operations where $\|A\| = \max_{i,j} |a_{ij}|$. 
\end{thm*}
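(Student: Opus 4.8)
The plan is the standard two-phase strategy for fast Smith form computation: first reduce the rectangular rank-$r$ problem to diagonalizing one $r\times r$ nonsingular integer matrix $B$ whose Smith form is that of $A$ bordered by a zero block, and then diagonalize $B$ while working modulo a precomputed multiple of the largest invariant factor of $A$, so that no integer ever grows past $O^\sim(r\log\|A\|)$ bits. \textbf{Phase 0 (a coefficient bound).} Using a fast rank computation (modular/fraction-free elimination accelerated by fast matrix multiplication) locate an $r\times r$ nonsingular submatrix $B_0$ of $A$ and compute $d:=|\det B_0|\neq 0$ by a fast determinant algorithm; this costs $O^\sim(r^\theta\log\|A\|)$ bit operations, which will be dominated by Phase 1. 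Since the largest invariant factor $s_r$ of $A$ divides $d_r(A)$, the gcd of all $r\times r$ minors of $A$, it divides the single minor $\det B_0$; hence $s_r\mid d$, while the Hadamard bound on the (small) entries of $A$ gives $\log d=O(r\log(r\|A\|))=O^\sim(r\log\|A\|)$. This $d$ is the modulus used in Phase 2.

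\textbf{Phase 1 (reduction to a nonsingular square matrix).} Compute a column Hermite normal form $AV_1=H$ with $V_1\in\GL_m(\ZZ)$; then $H$ has $m-r$ zero columns, whose deletion (a composition of column operations) does not change the Smith form, leaving an $n\times r$ matrix of full column rank. Transpose, repeat the Hermite computation, delete the zero columns again: the output is an $r\times r$ nonsingular $B$ together with the net unimodular row and column operations, and $S(A)$ is $S(B)$ bordered by a zero block. The quantitative input is that the entries of a Hermite form of a rank-$r$ integer matrix with entries bounded by $\|A\|$ have bit length $O(r\log(r\|A\|))=O^\sim(r\log\|A\|)$, by the Hadamard bound on the determinantal divisors; feeding this bound into a recursive block Hermite-form algorithm that eliminates $\Theta(r)$-sized blocks at a time with fast matrix multiplication, and keeping intermediate integers reduced modulo the relevant determinantal divisors, produces $H$ and $V_1$ in $O^\sim(nmr^{\theta-1}\log\|A\|+nm\,\mu(r\log\|A\|))$ bit operations. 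This step is the bottleneck; it is where both terms of the claimed bound arise, and every later step must fit underneath it (for $n,m\geqslant r$ one has $nmr^{\theta-1}\geqslant r^{\theta+1}\geqslant r^\theta$, absorbing Phases 0 and 2).

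\textbf{Phase 2 (diagonalizing $B$ modulo $d$).} Because $s_r\mid d$, every invariant factor of $B$ is at most $d$, and reduction modulo $d$ carries the integer Smith form of $B$ to a Smith form of $\bar B:=B\bmod d$ over the principal ideal ring $\ZZ/d\ZZ$ (unimodular matrices remain invertible mod $d$); hence any diagonalization $\bar U\bar B\bar V\equiv\diag(\bar s_1,\dots,\bar s_r)$ over $\ZZ/d\ZZ$ with $\bar s_1\mid\cdots\mid\bar s_r$ recovers the invariant factors of $B$ after lifting representatives to $[0,d)$ (the top one read as $d$ when it vanishes mod $d$), and $\bar U,\bar V$ lift to integer matrices which a standard $O^\sim(nm\,\mu(r\log\|A\|))$ correction turns into genuine unimodular matrices. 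Such a diagonalization over $\ZZ/d\ZZ$ is obtained by modular elimination with block structure and fast matrix multiplication in $O^\sim(r^{\theta})$ ring operations, each costing $O^\sim(\log d)=O^\sim(\mu(r\log\|A\|))$ bit operations, for $O^\sim(r^{\theta+1}\log\|A\|)=O^\sim(nm\,r^{\theta-1}\log\|A\|)$ bit operations in total (here $n=m=r$ at this stage). Composing the multipliers found for $B$ with the unimodular matrices recorded in Phase 1 gives $U\in\GL_n(\ZZ)$ and $V\in\GL_m(\ZZ)$ with $UAV=S(A)$; summing the costs of the three phases yields the stated bound.

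\textbf{Main obstacle.} The delicate point is the elimination over the non-field ring $\ZZ/d\ZZ$ in Phase 2: a prospective pivot may be a zero-divisor, so it must be replaced by a $\ZZ/d\ZZ$-combination of the current column generating the same ideal modulo $d$ (an extended-gcd-with-$d$ step), and one must verify that these repairs, together with the divisibility sweeps that turn the triangularization into an actual Smith form, cost only polylogarithmically more than a field elimination so that the $O^\sim(r^{\theta})$ ring-operation count survives. One must also prove carefully the two lifting facts invoked above — that a diagonalization of $B$ modulo any multiple of $s_r$ pins down the integer invariant factors, and that mod-$d$ unimodular multipliers can always be corrected to integer unimodular multipliers within budget — and, throughout Phases 1 and 2, track that no intermediate integer exceeds $O^\sim(r\log\|A\|)$ bits, so that the $nm\,\mu(r\log\|A\|)$ term genuinely absorbs all scalar arithmetic not already charged to fast matrix multiplication.
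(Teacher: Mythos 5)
The statement is a \emph{cited} theorem: the paper reproduces \cite[Proposition~7.20]{ASThesis} only in order to compare its complexity with the $O(n^{3+\epsilon})$ bound of Theorem~\ref{thm:main-algo}, and gives no proof of it at all. There is therefore no proof in the paper against which to check your argument; the only thing that can be assessed is whether your sketch is a faithful account of how Storjohann establishes the bound.

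At that level, your outline is the right kind of story and broadly matches the strategy in the cited thesis: condense to a nonsingular square matrix via Hermite/echelon reductions, bound the modulus by a single $r\times r$ minor (so $\log d = O^\sim(r\log\|A\|)$), then diagonalize mod $d$ and lift the multipliers. But it is a summary rather than a proof: the two claims you flag as ``the main obstacle'' are precisely the content of the proposition. Eliminating over $\ZZ/d\ZZ$ with zero-divisor pivots, while keeping a genuine $O^\sim(r^\theta)$ ring-operation count, is nontrivial and is handled in Storjohann's work via specific ``index reduction''/stable-pivot constructions and block recursion, not by a routine extended-gcd repair. Likewise, producing a full Hermite form \emph{together with the unimodular multiplier $V_1$} at the claimed cost is itself a delicate result of the thesis (the multiplier can in general be much larger than the matrix it reduces, which is why $\mu(r\log\|A\|)$ appears), and you invoke it as a black box. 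So your proposal correctly names the standard two-phase plan and correctly identifies where the technical difficulty sits, but as written it re-cites the hard parts rather than proves them; since the paper itself also treats the result as a black box, this is at least consistent with how the paper uses it.
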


Specialising to our case, it is seen that Storjohann's algorithm would
require $O(n^{2+\theta+\epsilon})$ bit operations. Here we use 
$\log\|A\|=O(n^{1+\epsilon})$ from Lemma~\ref{lem:cyclo-coeff} and Lemma~\ref{lem:ht-of-quorem}. Thus, our
algorithm is an improvement to this best known algorithm in the
special case we are interested in. 
\appendix 
\section{Determinant of $A_n$}
\label{sec:det-of-an}
We now calculate the determinant of the matrix $A_n$ in terms of
resultants of cyclotomic polynomials. The advantage of this new
approach is the fact that this generalises to any monic polynomial $f$
over a unique factorisation domain (UFD) and
any of its factorisations into pairwise relatively prime
polynomials. Moreover, this approach determines the sign of
$\det(A_n)$ unambigously. 

To calculate the determinant of $A_n$, we propose the following
simplification: 
\subsection{Simplification}
\label{ssec:a-tech}
Let $\Omega_n$ be the cyclic group of all $n$th roots of
unity. The cyclotomic polynomials $\{\Phi_d\}_{d \mid n}$ factorise
over the ring $\ZZ[\Omega_n]$ of cyclotomic integers. We will see that
calculating $\det(A_n)$ becomes very simple when we work over the ring of
cyclotomic integers. Now, we  shall argue that passing to
$\ZZ[\Omega_n]$ does not affect the
computation. 

To see this, consider the following diagram of maps:
\[
\begin{tikzcd}
\frac{\ZZ[X]}{(X^n - 1)} \ar{d}[swap]{\Psi_n} \arrow[hook]{r}{\iota_1} &
\frac{\ZZ[\Omega_n][X]}{(X^n-1)} \ar{d}{\widetilde{\Psi}_n} \arrow{dr}[description]{\rho_2}\\
\bigoplus_{d \mid n} \frac{\ZZ[X]}{\Phi_d(X)} \arrow[hook]{r}{\iota_2} 
& \bigoplus_{d \mid n}\frac{\ZZ[\Omega_n][X]}{(\Phi_d(X))} \ar{r}{\rho_1}&  \bigoplus_{\omega \in
  \Omega_n}\frac{\ZZ[\Omega_n][X]}{(X-\omega)}
\end{tikzcd}
\]
where $\widetilde{\Psi}_n$ is the $\ZZ[\Omega_n]$-linear extension of
$\Psi_n$,  $\rho_1$ is the canonical quotient map, 
$\rho_2$ is the composition $\rho_1 \circ
   \widetilde{\Psi}_n$ and $\iota_1, \iota_2$ are canonical
  inclusions. 
% \begin{itemize}
% \item The inclusions $\iota_1$ and $\iota_2$ are defined by taking $\overline{1}
%   \mapsto \overline{1}$ and $\overline{X} \mapsto \overline{X}$
%   between the respective rings. 
% \item The map $\widetilde{\Psi}_n$ is the $\ZZ[\Omega_n]$-linear map,
%   defined by the $\ZZ[\Omega_n]$ extension of the map:
%   \begin{equation}
%     \label{eq:psi-t-n}
%     \overline{X}^i \mapsto \bigoplus_{d \mid n} X^i \bmod{\Phi_d(X)}
%   \end{equation}
% \item The map $\rho_1$ is the canonical quotient map. 
% \item The map $\rho_2$ is the composition $\rho_1 \circ
%   \widetilde{\Psi}_n$. 
% \end{itemize}
%Note that the diagram commutes: $\widetilde{\Psi}_n \circ \iota_1 =
%\iota_2 \circ \Psi_n$.  Now, we may deduce that the determinant of
% $\widetilde{\Psi}_n$ equals that of $\Psi_n$, when computed
% with respect to ``compatible bases''.  In fact, with $(1, \overline{X}, \dots,
% {\overline{X}}^{n-1})$ as the basis for the domain of $\widetilde{\Psi}_n$ and $(1, \overline{X}, \dots,
% {\overline{X}}^{\phi(d)-1})$ as the basis for $\ZZ[\Omega_n][X]/\Phi_d(X)$, we see that the
% determinants are indeed equal. Thus, we are reduced to calculating the determinant of
% $\widetilde{\Psi}_n$. 
The matrix of $\widetilde{\Psi}_n$ with respect to $(1, \overline{X}, \dots,
 {\overline{X}}^{n-1})$ as a basis for $\ZZ[\Omega_n][X]/(X^n-1)$ and $(1, \overline{X}, \dots,
 {\overline{X}}^{\phi(d)-1})$ as a basis for
 $\ZZ[\Omega_n][X]/\Phi_d(X)$ is equal to $A_n$. So we 
have:
\begin{equation}
\label{eq:det-mult}
\det(A_n) = \det(\widetilde{\Psi}_n) = \frac{\det(\rho_2)}{\det(\rho_1)}.
\end{equation}

However, to write the matrix of the $\ZZ[\Omega_n]$-linear maps $\rho_1$ and
$\rho_2$, we need an ordered basis for the codomain of $\rho_1$ which
is  the same as that of $\rho_2$.
So, we may fix any total order $\prec$ on $\Omega_n$ so that if $d_1$ and $d_2$ are
two divisors of $n$ and $d_1 < d_2$, then, all the primitive $d_1$th
roots of unity precede all the primitive $d_2$th roots of unity in the order
$\prec$. Say, 
\begin{equation}
\Omega_n = \coprod_{d\mid n} \{\omega_{d,1}, \dots, \omega_{d,
  \phi(d)}\} = \{\omega_{d,j}: d \mid n,
1\leqslant j \leqslant \phi(d)\}
\end{equation} 
where $\omega_{d,j}$ is a primitive $d$th root of unity ($1\leqslant j
\leqslant \phi(d)$) and $\Omega_n$ is ordered lexicographically. 

The rest of the calculation will determine $\det(\rho_1)$ and
$\det(\rho_2)$. The matrix of $\rho_1$ with respect to the chosen
basis is a block matrix $\diag(\mathcal{D}_1, \dots, \mathcal{D}_d, \dots, \mathcal{D}_n)_{d \mid
  n}$ where $\mathcal{D}_d$ is a  $\phi(d) \times \phi(d)$ matrix of the
following form:
\begin{equation} \label{eq:disc-like}
\mathcal{D}_d = 
\begin{pmatrix}
1 & \omega_{d,1}               & \omega_{d,1}^2 & \cdots & \omega_{d,1}^{\phi(d)-1}\\
1 & \omega_{d,2}               & \omega_{d,2}^2 & \cdots &
\omega_{d,2}^{\phi(d)-1}\\
1 & \omega_{d,3}               & \omega_{d,3}^2 & \cdots &
\omega_{d,3}^{\phi(d)-1} \\
\vdots & \vdots             & \vdots                 &             & \vdots\\
1         &\omega_{d,\phi(d)} & \omega_{d,\phi(d)}^2 & \cdots & \omega_{d,\phi(d)}^{\phi(d)-1}
\end{pmatrix} 
\end{equation}
where $\omega_{d,1} \prec \dots \prec \omega_{d,\phi(d)}$ are primitive $d$th
roots of unity. 

The matrix $[\rho_2]$ of $\rho_2$ with respect to the chosen basis is
a block column matrix \[(R_1, \dots, R_d, \dots, R_n)_{d \mid n}^t\]
where $R_d$ is the following matrix:
\begin{equation}
R_d = \begin{pmatrix}
   1      & \omega_{d,1} & \dots & \omega_{d,1}^{\ell}     & \dots & \omega_{d,1}^{n-1}\\
   \vdots & \vdots  &       & \vdots           &       & \vdots   \\
    1     & \omega_{d,j} & \dots &\omega_{d,j}^{\ell} &\dots& \omega_{d,j}^{n-1}\\ 
   \vdots & \vdots &               & \vdots &                  & \vdots\\
   1      & \omega_{d,\phi(d)} & \dots & \omega_{d,\phi(d)}^{\ell} & \dots & \omega_{d,\phi(d)}^{n-1}
\end{pmatrix}
\end{equation}
Since the matrices $\mathcal{D}_{d}$ and $[\rho_2]$ are
Vandermonde matrices, we have at once that:
\begin{align}
\det(\rho_1) &= \prod_{d \mid n} \det(\mathcal{D}_d)\notag \\
                    &= \prod_{d \mid n}
                   \prod_{1
                      \leqslant i < j \leqslant \phi(d)} (\omega_{d,j}
                    - \omega_{d,i}) \\
\det(\rho_2) &= \prod_{\substack{d_1, d_2 \mid n \\ 1 \leqslant d_1 < d_2
  \leqslant n}}\prod_{\substack{1 \leqslant i \leqslant
    \phi(d_1)\\1 \leqslant j \leqslant \phi(d_2)}} (\omega_{d_2, j} -
  \omega_{d_1, i})\prod_{d \mid n}\prod_{1 \leqslant i < j
  \leqslant \phi(d)} (\omega_{d, j} - \omega_{d, i})
\end{align}
Now, from \eqref{eq:det-mult}, we see that:
\begin{align}
  \det(A_n) &= \prod_{\substack{d_1, d_2 \mid n \\ 1 \leqslant d_1 <
      d_2\leqslant n}} \left(\prod_{\substack{1 \leqslant i \leqslant
    \phi(d_1)\\1 \leqslant j \leqslant \phi(d_2)}} (\omega_{d_2, j} -
  \omega_{d_1, i}) \right) \\
                 &= \prod_{\substack{d_1, d_2 \mid n \\ 1 \leqslant d_1 < d_2
  \leqslant n}} \mathcal{R}(\Phi_{d_2}, \Phi_{d_1}) \label{eq:det-psi-n-formula}
\end{align}
where $\mathcal{R}(f, g)$ is the resultant of the polynomials $f$ and
$g$ (for definition and basic properties of resultants of polynomials,
see \cite[Chapter 1,  Section 3]{VVP}). 
The resultant of pairs of cyclotomic polynomials first appears in print in the work
of E.~Lehmer~\cite[Theorem~4]{Lehmer:1930aa}.  
These resultants were also calculated by Diederichsen \cite[\S3,
Hilfssatz 2]{Dchsen} independently in his work on integral
representations of cyclic groups. We also refer to 
Apostol \cite{TRes} and Dresden \cite{Dresden} for alternative proofs. The following result will
be used to finish off the computation:
\begin{thm}[E.\@ Lehmer]\label{thm:dieder}
Let $m$ and $n$ be positive integers. 
\begin{enumerate} 
\item $\mathcal{R}(\Phi_m, \Phi_n) = 0$ if and only if $m = n$. 
\end{enumerate}
Assume now that $m > n$.
\begin{enumerate}[resume]
\item If $n = 1$, then, 
  \begin{equation}
    \label{eq:res-pair-n=1}
    \mathcal{R}(\Phi_m, \Phi_n) = 
\begin{cases}
 (-1)^{\phi(m)} p &\text{ if } m = p^{\alpha} \text{ for some } \alpha
 > 0\\
 (-1)^{\phi(m)} &\text{ otherwise}
\end{cases}
\end{equation}
\item If $n > 1$ and $\gcd(m, n) = 1$, then, $\mathcal{R}(\Phi_m, \Phi_n)
  = 1$.
\item If $n > 1$ and $\gcd(m, n) > 1$, then, 
\begin{equation}
    \label{eq:res-pair-n>1}
    \mathcal{R}(\Phi_m, \Phi_n) = 
\begin{cases}
 p^{\phi(m)} &\text{ if } \frac{m}{n} = p^{\alpha} \text{ for some }
 \alpha > 0\\
 1 &\text{ otherwise}
\end{cases}
\end{equation}
\end{enumerate}
\end{thm}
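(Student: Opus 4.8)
The plan is to reduce the whole computation to two elementary inputs. The first is the evaluation of a cyclotomic polynomial at $1$: $\Phi_\ell(1) = p$ when $\ell = p^j$ is a prime power with $j \geqslant 1$, and $\Phi_\ell(1) = 1$ when $\ell > 1$ has at least two distinct prime factors (both follow by induction on $\ell$ from $\prod_{1 < d \mid \ell}\Phi_d(1) = \ell$). The second is that, for an integer $k$, the $k$th power map carries the set of primitive $m$th roots of unity \emph{onto} the set of primitive $m'$th roots of unity, all fibres having the same cardinality $\phi(m)/\phi(m')$, where $m' = m/\gcd(m,k)$. With these in hand, assertion~(1) is immediate: writing $\mathcal{R}(\Phi_m,\Phi_n) = \prod_{\zeta,\xi}(\zeta - \xi)$ — the product running over primitive $m$th roots $\zeta$ and primitive $n$th roots $\xi$, which is the product formula for the resultant of two monic polynomials — this vanishes exactly when $\Phi_m$ and $\Phi_n$ have a common root, i.e.\ exactly when $m = n$. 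From here on I assume $m > n$, so that $m \nmid k$ for every divisor $k$ of $n$; then the second input gives
\[
\mathcal{R}(\Phi_m, X^k - 1) = \prod_{\zeta}(\zeta^k - 1) = (-1)^{\phi(m)}\,\Phi_{m/\gcd(m,k)}(1)^{\,\phi(m)/\phi(m/\gcd(m,k))} \qquad (k \mid n),
\]
since $\prod_{\zeta}(1 - \zeta^k) = \bigl(\prod_{\eta}(1 - \eta)\bigr)^{\phi(m)/\phi(m')} = \Phi_{m'}(1)^{\phi(m)/\phi(m')}$ with $\eta$ ranging over primitive $m'$th roots and $m' = m/\gcd(m,k) > 1$.

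Next I would invoke the multiplicative Möbius inversion $\Phi_n(X) = \prod_{k \mid n}(X^k - 1)^{\mu(n/k)}$ of $X^n - 1 = \prod_{k \mid n}\Phi_k(X)$ together with multiplicativity of the resultant in its second argument; as every $\mathcal{R}(\Phi_m, X^k - 1)$ with $k \mid n$ is nonzero, the negative exponents are harmless and one obtains
\[
\mathcal{R}(\Phi_m, \Phi_n) = \prod_{k \mid n}\mathcal{R}(\Phi_m, X^k - 1)^{\mu(n/k)} = (-1)^{\phi(m)\sum_{k \mid n}\mu(n/k)}\prod_{k \mid n}\Phi_{m/\gcd(m,k)}(1)^{\,\mu(n/k)\,\phi(m)/\phi(m/\gcd(m,k))}.
\]
Since $\sum_{k \mid n}\mu(n/k)$ is $1$ for $n = 1$ and $0$ for $n > 1$, the prefactor is $(-1)^{\phi(m)}$ when $n = 1$ and $+1$ when $n > 1$; in particular, for $n > 1$ the resultant is automatically a nonnegative power of a prime (or $1$). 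The case $n = 1$ retains only $k = 1$ and reads $\mathcal{R}(\Phi_m,\Phi_1) = (-1)^{\phi(m)}\Phi_m(1)$, which is \eqref{eq:res-pair-n=1} by the first input. When $n > 1$ and $\gcd(m,n) = 1$, every $k \mid n$ is coprime to $m$, so $m/\gcd(m,k) = m$ throughout and the product telescopes to $\Phi_m(1)^{\sum_{k \mid n}\mu(n/k)} = \Phi_m(1)^{0} = 1$, which is~(3).

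The case $n > 1$ with $\gcd(m,n) > 1$ is where the real arithmetic lies, and I expect the exponent bookkeeping to be the main obstacle. One must read off, for each prime $p$, its exponent in the last product. Fix $p$ and write $m = p^{a}m^{\flat}$ with $p \nmid m^{\flat}$ and $a \geqslant 1$ (nothing to do if $p \nmid m$). By the $\Phi_\ell(1)$ dichotomy only those $k \mid n$ for which $m/\gcd(m,k)$ is a power of $p$ contribute — equivalently, $m^{\flat} \mid k$ and $v_p(k) \leqslant a - 1$ — each supplying $\mu(n/k)\,\phi(m)/\phi\bigl(p^{\,a - v_p(k)}\bigr)$. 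Splitting $k$ into its $p$-part and its prime-to-$p$ part, the sum over the prime-to-$p$ part is a Möbius sum over the divisors of $N/m^{\flat}$, where $N$ denotes the prime-to-$p$ part of $n$; hence it vanishes unless $m^{\flat} = N$, i.e.\ unless $m/n$ is a power of $p$. In that surviving case the residual sum over the (at most two) values of $v_p(k)$ not annihilated by the Möbius factor telescopes, and a short totient computation collects it into the value claimed in \eqref{eq:res-pair-n>1}; assembling over all primes gives~(4). An alternative that avoids Möbius inversion is to induct on the number of divisors of $n$ via $\prod_{k \mid n}\mathcal{R}(\Phi_m,\Phi_k) = \mathcal{R}(\Phi_m, X^n - 1)$ — then (2), (3) and the coprime subcase of (4) come out at once — but the prime-power subcase of (4) still demands essentially the same bookkeeping.
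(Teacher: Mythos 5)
Your route is sound and is essentially the classical one (it is how Apostol and Dresden derive this result, and note the paper itself offers no proof of this theorem --- it is quoted from Lehmer/Diederichsen with references): the identity $\mathcal{R}(\Phi_m,X^k-1)=(-1)^{\phi(m)}\Phi_{m'}(1)^{\phi(m)/\phi(m')}$ with $m'=m/\gcd(m,k)>1$, multiplicativity in the second argument, and Möbius inversion do give parts (1)--(3) and the non-prime-power subcase of (4) exactly as you say. The problem is the one step you defer. If you actually carry out the totient bookkeeping in the surviving case $m^\flat=N$ (so $m=p^am^\flat$, $n=p^bm^\flat$, $a>b\geqslant 0$), the only terms with $\mu(p^{b-c})\neq 0$ are $c=b$ and $c=b-1$, and the $p$-exponent is $\phi(m)/\phi(p^{a-b})-\phi(m)/\phi(p^{a-b+1})=p^b\phi(m^\flat)-p^{b-1}\phi(m^\flat)=\phi(n)$ (and simply $\phi(m^\flat)=\phi(n)$ when $b=0$). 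So your method yields $\mathcal{R}(\Phi_m,\Phi_n)=p^{\phi(n)}$, \emph{not} the $p^{\phi(m)}$ of \eqref{eq:res-pair-n>1}; your closing assertion that the computation ``collects into the value claimed'' is therefore false as stated.

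The resolution is that the printed statement itself is in error: for $m=4$, $n=2$ one has $\mathcal{R}(\Phi_4,\Phi_2)=(i+1)(-i+1)=2=2^{\phi(2)}$, not $2^{\phi(4)}=4$. The classical theorem has exponent $\phi(n)$ (the smaller index), and the paper's subsequent determinant computation is consistent with this corrected value --- in the displayed product it attributes $p_i^{\phi(d)}$ to each divisor $d=d_1$ of $np_i^{-e_i}$, i.e.\ uses $\phi$ of the smaller member of the pair (with exponent $\phi(d_2)$ the product over pairs would give $-16$ instead of the correct $\det(A_4)=-8$). So your argument, carried to completion, proves the corrected statement with $p^{\phi(n)}$; you cannot prove the statement as printed, and you should flag the misprint rather than assert agreement with \eqref{eq:res-pair-n>1}.
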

We now start from \eqref{eq:det-psi-n-formula} and use
Theorem~\ref{thm:dieder} to get a closed form expression for
$\det(A_n)$ in terms of the prime factorisation of $n$, say, $n =
p_1^{\alpha_1} \cdots \;p_r^{\alpha_r}$.

We first rewrite \eqref{eq:det-psi-n-formula} as follows: 
\begin{align}
  \prod_{\substack{1 \leqslant d_1 < d_2 \leqslant n \\ d_1, d_2 \mid
      n}} \mathcal{R}(\Phi_{d_2}, \Phi_{d_1}) &=
  \prod_{\substack{d \mid n\\d \neq 1}}\mathcal{R}(\Phi_{d}, X-1) \prod_{\substack{1 < d_1 < d_2 \leqslant n \\ d_1, d_2 \mid
      n}} \mathcal{R}(\Phi_{d_2}, \Phi_{d_1}) \label{eq:step-1-split}
  \\
\intertext{As a consequence of
  \eqref{eq:res-pair-n=1}, the first product on the right hand side of 
  \eqref{eq:step-1-split}  evaluates to:} 
\prod_{\substack{d \mid n\\d \neq 1}}\mathcal{R}(\Phi_{d}, X-1) &=
\prod_{\substack{d \mid n\\d \neq 1}} (-1)^{\phi(d)}\Phi_d(1)  =(-1)^{n-1} n.
\intertext{For evaluating the second product, by
  Theorem~\ref{thm:dieder}, note
  that a pair $(d_1, d_2)$ of divisors of $n$ contributes to the
  product if and only if $d_1 \neq 1$ and the ratio $\frac{d_2}{d_1}$ is a prime
  power, say $p_i^{e_i}$ for some $1 \leqslant i \leqslant r$ and
  $1\leqslant e_i \leqslant \alpha_i$; also each such pair contributes
  $p_i^{\phi(d_2)}$ to the product. For a fixed prime $p_i$ ($1
  \leqslant i \leqslant r$) and exponent $e_i$ ($1 \leqslant e_i \leqslant
  \alpha_i$), every divisor $d_1$ of $\frac{n}{p_i^{e_i}}$ with $d_1
  \neq 1$ determines a contributing pair $(d_1, d_2)$ of divisors and
  conversely. Therefore:}
 \prod_{\substack{1 < d_1 < d_2 \leqslant n \\ d_1, d_2 \mid
      n}} \mathcal{R}(\Phi_{d_2}, \Phi_{d_1}) &= \prod_{i=1}^r \prod_{e_i=1}^{\alpha_i} \prod_{\substack{d
    \mid np_i^{-e_i}\\d \neq 1}} p_i^{\phi(d)} \\
& = \frac 1 n\prod_{i=1}^r p_i^{n\sum_{e_i=1}^{\alpha_i}p_i^{-e_i} }
\end{align}
This finishes the computation and we now have:
\begin{thm}
For a positive integer $n$, we have:
\begin{align}
\det(A_n) &=  \prod_{\substack{1 \leqslant d_1 < d_2 \leqslant n \\ d_1, d_2 \mid
      n}} \mathcal{R}(\Phi_{d_2}, \Phi_{d_1})\label{eq:det-in-terms-of-res}\\
&= (-1)^{n-1} \prod_{i=1}^r p_i^{\frac {n(1-p_i^{-\alpha_i})} {(p_i-1)}}.
\end{align}
\end{thm}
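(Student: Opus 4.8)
The plan is to prove the two equalities separately: the first, $\det(A_n) = \prod_{1\leqslant d_1 < d_2 \leqslant n,\ d_i\mid n}\mathcal{R}(\Phi_{d_2},\Phi_{d_1})$, by base-changing to the cyclotomic integers and recognising $\Psi_n$ as ``half'' of a Vandermonde evaluation map; the second, the closed form, by substituting E.\ Lehmer's resultant evaluations (Theorem~\ref{thm:dieder}) and summing a finite geometric series. An advantage of routing the computation through resultants rather than through the elementary-divisor analysis of Section~\ref{sec:smith-normal-form-an} is that this approach will also pin down the sign of $\det(A_n)$ unambiguously.

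For the first equality I would not touch the block-recursive form of $A_n$. Instead, extend scalars along $\ZZ \hookrightarrow \ZZ[\Omega_n]$, where $\Omega_n$ is the group of $n$th roots of unity and each $\Phi_d$ ($d\mid n$) splits into distinct linear factors; the $\ZZ[\Omega_n]$-linear extension $\widetilde{\Psi}_n$ has the same matrix $A_n$ in the standard monomial bases, so it suffices to compute $\det(\widetilde{\Psi}_n)$. The key point is that composing $\widetilde{\Psi}_n$ with the further quotient $\rho_1\colon \bigoplus_{d\mid n}\ZZ[\Omega_n][X]/\Phi_d \to \bigoplus_{\omega\in\Omega_n}\ZZ[\Omega_n][X]/(X-\omega)$ yields $\rho_2 := \rho_1\circ\widetilde{\Psi}_n$, which is nothing but ``evaluate at every $n$th root of unity''. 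After fixing once and for all a total order on $\Omega_n$ in which all primitive $d_1$th roots precede all primitive $d_2$th roots whenever $d_1 < d_2$, the matrices of $\rho_1$ and $\rho_2$ in the corresponding ordered basis are block-Vandermonde, so their determinants are explicit products of root differences; in particular $\det(\rho_1) = \prod_{d\mid n}\prod_{i<j}(\omega_{d,j}-\omega_{d,i}) \neq 0$, which legitimises the identity $\det(A_n) = \det(\rho_2)/\det(\rho_1)$. The ``within a single $d$'' Vandermonde factors of $\det(\rho_2)$ cancel those of $\det(\rho_1)$, leaving exactly $\prod_{d_1 < d_2}\prod_{i,j}(\omega_{d_2,j}-\omega_{d_1,i}) = \prod_{d_1<d_2}\mathcal{R}(\Phi_{d_2},\Phi_{d_1})$, the cyclotomic polynomials being monic.

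For the second equality I would feed Theorem~\ref{thm:dieder} into this product and reorganise it. Pull out the pairs with $d_1=1$: these contribute $\prod_{d\mid n,\ d\neq 1}\mathcal{R}(\Phi_d, X-1) = \prod_{d\mid n,\ d\neq 1}(-1)^{\phi(d)}\Phi_d(1) = (-1)^{n-1}n$, using $\sum_{d\mid n}\phi(d) = n$ and $\prod_{d\mid n,\ d>1}\Phi_d(1) = \lim_{X\to 1}\tfrac{X^n-1}{X-1} = n$. For the pairs with $1 < d_1 < d_2\mid n$, Theorem~\ref{thm:dieder} makes the resultant trivial unless $d_2/d_1$ is a prime power $p_i^{e_i}$ with $1\leqslant e_i\leqslant\alpha_i$, in which case it contributes $p_i^{\phi(d_1)}$; reindexing the surviving pairs by $(p_i, e_i, d_1)$ with $d_1\mid n/p_i^{e_i}$ and $d_1\neq 1$, and collapsing the innermost product via $\sum_{d\mid m}\phi(d)=m$, this part becomes $\tfrac1n\prod_{i=1}^r p_i^{\,n\sum_{e=1}^{\alpha_i}p_i^{-e}}$. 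Multiplying by $(-1)^{n-1}n$ and evaluating $n\sum_{e=1}^{\alpha_i}p_i^{-e} = \tfrac{n(1-p_i^{-\alpha_i})}{p_i-1}$ produces the stated closed form, with the sign emerging as $(-1)^{n-1}$.

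I expect the obstacles to be bookkeeping rather than ideas. In the first equality one must be scrupulous that $\rho_1$ and $\rho_2$ are written against the \emph{same} ordered list of roots, so that the cancellation of Vandermonde factors is literal and introduces no stray sign — this is the one place a sign can be lost silently. In the second, the genuine work is combinatorial: correctly identifying which divisor pairs $(d_1,d_2)$ of $n$ survive, grouping them by the prime power $d_2/d_1 = p_i^{e_i}$, and verifying that what remains for each fixed ratio is exactly $\prod_{d_1\mid n/p_i^{e_i},\ d_1\neq 1}p_i^{\phi(d_1)}$, after which $\sum_{d\mid m}\phi(d)=m$ and the geometric-series identity finish the computation routinely.
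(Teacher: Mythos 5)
Your proposal is correct and takes essentially the same route as the paper's appendix proof: extend scalars to $\ZZ[\Omega_n]$, compute $\det(A_n)=\det(\rho_2)/\det(\rho_1)$ as a ratio of (block-)Vandermonde determinants to obtain the resultant product, then evaluate it via Lehmer's resultant formulas by splitting off the $d_1=1$ pairs and grouping the remaining pairs by the prime-power ratio $d_2/d_1$. Your contribution $p_i^{\phi(d_1)}$ for a surviving pair is indeed the correct exponent and matches the triple product the paper actually evaluates.
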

Following the computations done before Theorem~\ref{thm:dieder}, we
may prove: 
\begin{thm}
\label{thm:general-result}
Suppose that $f$ is a monic polynomial over an integral domain $R$ and 
\[f = \prod_{i=1}^n f_i\]
is a factorisation of $f$ into monic
polynomials. Then, the determinant of the canonical map $\Psi_f$
written with respect to the standard basis is:
\begin{equation}
  \label{eq:gen-det}
  \det(\Psi_f) = \prod_{1 \leqslant i < j \leqslant n} \mathcal{R}(f_j, f_i).
\end{equation}
\end{thm}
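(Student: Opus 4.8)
The plan is to reduce to the situation treated in the computation preceding Theorem~\ref{thm:dieder}, where all polynomials in sight split into distinct linear factors. First I would note that, once the degrees $d_i=\deg f_i$ are fixed, both $\det(\Psi_f)$ and $\prod_{1\le i<j\le n}\mathcal{R}(f_j,f_i)$ are values of fixed polynomials with integer coefficients evaluated at the coefficients of the $f_i$: for the determinant because the entries of the matrix of $\Psi_f$ in the monomial bases are the coefficients of the remainders of $X^\ell$ on division by $f_i$, which the division algorithm (with monic divisor) expresses polynomially in the coefficients of $f_i$; for the resultants this is classical. So it suffices to prove the identity in the ``universal'' domain $R_0=\ZZ[c_{i,k}:1\le i\le n,\ 1\le k\le d_i]$ for the generic monic polynomials $F_i(X)=X^{d_i}+c_{i,1}X^{d_i-1}+\dots+c_{i,d_i}$ with $F=\prod_iF_i$; the stated result then follows by applying the specialization homomorphism $R_0\to R$ that sends $c_{i,k}$ to the coefficients of the given $f_i$.

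Over an algebraic closure $\overline{K_0}$ of $\operatorname{Frac}(R_0)$ each $F_i$ splits as $\prod_{k=1}^{d_i}(X-\alpha_{i,k})$, and---this is what makes the argument go through---all the roots $\alpha_{i,k}$ are pairwise distinct, since the discriminant of $F_i$ and the resultants $\mathcal{R}(F_i,F_j)$ for $i\ne j$ are nonzero elements of $R_0$. I would then copy the appendix argument: base change $\Psi_F$ to $\widetilde\Psi_F$ over $\overline{K_0}$ and factor $\rho_2=\rho_1\circ\widetilde\Psi_F$, where $\rho_1\colon\bigoplus_i\overline{K_0}[X]/(F_i)\to\bigoplus_{i,k}\overline{K_0}[X]/(X-\alpha_{i,k})$ is the canonical projection and $\rho_2$ is evaluation at all the roots. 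With respect to the monomial bases and a basis of the common codomain indexed by the pairs $(i,k)$ ordered so that pairs with a smaller $i$ come first, $\widetilde\Psi_F$ has the same matrix as $\Psi_F$, the matrix of $\rho_1$ is block diagonal with Vandermonde blocks on $\alpha_{i,1},\dots,\alpha_{i,d_i}$, and the matrix of $\rho_2$ is a single Vandermonde matrix on all $\deg F$ roots in that order. Evaluating the two Vandermonde determinants and dividing (legitimate because $\det\rho_1\ne0$) should give
\[
\det(\Psi_F)=\frac{\det\rho_2}{\det\rho_1}=\prod_{1\le i<j\le n}\ \prod_{k,l}(\alpha_{j,l}-\alpha_{i,k})=\prod_{1\le i<j\le n}\mathcal{R}(F_j,F_i),
\]
the last equality because $\mathcal{R}(g,h)=\prod_{l,k}(\gamma_l-\delta_k)$ whenever $g=\prod_l(X-\gamma_l)$ and $h=\prod_k(X-\delta_k)$ are monic.

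I expect the main obstacle to be bookkeeping rather than anything conceptual: the $\deg F$ roots must be listed in the Vandermonde determinant of $\rho_2$ so that every cross term appears as $\alpha_{j,l}-\alpha_{i,k}$ with $i<j$, and this is exactly what decides whether the answer reads $\mathcal{R}(f_j,f_i)$ or $\mathcal{R}(f_i,f_j)$---the two differing by the sign $(-1)^{d_id_j}$---so the ordering has to be matched carefully to the sign convention for resultants in \cite{VVP}. A minor point worth spelling out is that base change leaves the matrix of $\Psi_F$ unchanged, and that the case in which the $f_i$ are not pairwise coprime is automatically covered by the universal identity, both sides then vanishing.
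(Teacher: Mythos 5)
Your proposal is correct and follows essentially the same route as the paper: both sides are integer polynomial functions of the coefficients, the identity is proved for the generic monic polynomials and then transported to $R$ by the specialization homomorphism, and the generic (separable) case is settled by the same $\rho_2=\rho_1\circ\widetilde\Psi$ factorization into Vandermonde determinants, with the same care about ordering the roots so that the cross terms read $\mathcal{R}(f_j,f_i)$ with $i<j$. The one point where you diverge is how the universal identity is clinched: the paper proves the formula only for specializations without repeated roots and then invokes a Zariski-density argument on $\mathbf{Z}^d$ (irreducibility of affine space, nonvanishing of the discriminant on a nonempty open set) to conclude the identity in the generic coefficient ring $A$, whereas you verify it directly at the generic point by passing to an algebraic closure of $\operatorname{Frac}(R_0)$ and observing that the generic discriminants and cross-resultants are nonzero, so the generic roots are genuinely distinct. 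This bypasses the density step and is, if anything, a slight streamlining of the paper's argument; the bookkeeping you flag (root ordering versus the sign convention $\mathcal{R}(g,h)=\prod(\gamma_l-\delta_k)$ for monic $g,h$) is indeed the only delicate point, and you have it matched to the paper's convention.
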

\begin{proof}
Let $f_1, \dots, f_n$ and $f$ be as given. The argument outlined above
shows that \eqref{eq:gen-det} holds
when $f$ has no repeated roots. 

To handle the general case, we need an argument using the Zariski topology. Suppose that $f_j$ has degree $d_j$ and put $d = \sum_{j=1}^n d_j$.
Recall that a generic monic polynomial $P_j$ of degree $d_j$ is given by 
 \begin{equation}
   P_j(X) = \sum_{k=0}^{d_j - 1} T_j(k) X^k + X^{d_j}
 \end{equation}
with coefficients in the polynomial algebra \[A = \mathbf{Z}[\{T_j(k):
0 \leqslant k \leqslant d_j - 1\}_{1 \leqslant j \leqslant n}].\] 

Put $P = \prod_{j=1}^n P_j$. We consider the natural map 
\[\Psi_P: A[X]/(P) \to \bigoplus_{j=1}^n A[X]/(P_j).\]
Let us note the following:
\begin{itemize}
\item %The entries of the matrix $\Psi_P$ are polynomials in the
      %coefficients of the $P_j$'s, viz, they are elements of $A$. 
The map $\Psi_P$ is $A$-linear and consequently, $\det(\Psi_P)$ belongs to $A$. 
\item For $1 \leqslant i_1, i_2 \leqslant r$, the resultant $\mathcal{R}(P_{i_1}, P_{i_2})$ is also an element of $A$. 
\end{itemize}
We begin with the following observation: 
\begin{obsn*}
  Let $B$ be any ring. Given any element $\underline{c} \in B^d$, there is a unique map from
$h_{\underline{c}}: A \to B$ such that $(h_{\underline{c}}(T_j(k)):0 \leqslant k
\leqslant d_j - 1, 1 \leqslant j \leqslant n)=\underline{c}$.
\end{obsn*}
We claim that $\det(\Psi_P) = \prod_{1 \leqslant i < j \leqslant n}
\mathcal{R}(P_j, P_i)$ as polynomials over integers. Indeed, the equality holds on the set
$S := \{\underline{c} \in \mathbf{Z}^d: h_{\underline{c}}(\Disc(P)) \neq
0\}$ where $\Disc(P)$ is the discriminant of $P$ given by the
square of the determinant
of the Vandermonde matrix corresponding to the roots of $P$ in an
algebraic closure of $\mathbf{Q}$ (cf.~\eqref{eq:disc-like}; see also \cite[IV,
\S6, p.192 and IV, \S8, p.204]{LangAlg2002}).  
The set $S$ is non-empty (take $P(X) = X(X-1) \dots (X - d + 1)$ for
example) and open. Since $\mathbf{Z}^d$ is irreducible for the Zariski
topology, we have that $S$ is dense in $\mathbf{Z}^n$ so that our
claim follows. 

Now, to prove \eqref{eq:gen-det}, we need to only note the following: if
$\underline{c} \in R^d$ is such that $h_{\underline{c}}(P_j) = f_j$
for all $1 \leqslant j \leqslant n$, then: 
\begin{align*}
\det(\Psi_f) &= h_{\underline{c}}(\det(\Psi_P)) \\
  &= h_{\underline{c}}\left(\prod_{1 \leqslant i < j \leqslant n}
\mathcal{R}(P_j, P_i)\right) \\
  &= \prod_{1 \leqslant i < j \leqslant n}
\mathcal{R}(f_j, f_i)
\end{align*}
and this completes the proof. 
\end{proof}
%Given $\underline{c} \in R^n$, write $h_{\underline{c}}(Q)$ for the
%image in $R[X]$ of $Q \in A[X]$ 
\begin{rem}
  From the above theorem, the resultant $\mathcal{R}(f_2, f_1)$ is the determinant of
the map $\Psi_{f_1f_2}$ written with respect to the standard
basis. Specialising to $f_1 = \Phi_m(X)$ and $f_2 = \Phi_n(X)$ with $m
> n$, we have the following exact sequence:
\[
\begin{tikzcd}
  0 \ar{r}& \frac{\ZZ[X]}{\Phi_m(X)\Phi_n(X)} \ar{r}& \frac{\ZZ[X]}{\Phi_m(X)} \oplus
  \frac{\ZZ[X]}{\Phi_n(X)} \ar{r} & G(\Phi_m(X)\Phi_n(X)) \ar{r} & 0
\end{tikzcd}
\]
Now, we have $|G(f)| = |\mathcal{R}(\Phi_m, \Phi_n)|$. Also, the
intersection of the ideal generated by $\Phi_m$ and $\Phi_n$ with
$\ZZ$ is given by (\cite[Theorem~2]{Dresden}):
\begin{equation}
  \label{eq:ideal-intersection}
  \langle \Phi_m(X), \Phi_n(X)\rangle \cap \ZZ= \begin{cases} 
p\ZZ &\text{ if } \frac{m}{n} = p^\alpha \text{ for some } \alpha > 0\\
\ZZ   &\text{otherwise} 
\end{cases} 
\end{equation}
Now, setting $r = \phi(m) + \phi(n)$, if $(e_0, \dots, e_{r-1})$ are
elementary divisors of $\Psi_{\Phi_m\Phi_n}$, then, $e_i \mid p$ and $\prod e_i =
|\mathcal{R}(\Phi_m, \Phi_n)|$. If $\frac{m}{n}$ is not power of a
prime, then, $e_i =1$ for all $i$. If $\frac{m}{n}$ is power of a
prime $p$, the elementary divisors are given by $e_i = 1$ for $0
\leqslant i \leqslant \phi(n)-1$ and $e_j =
p$ for $\phi(n)\leqslant  j \leqslant \phi(m)+\phi(n)-1$. 
\end{rem}
\begin{rem}
  Let $S$ be a UFD. Then, $R = S[x_1, \dots, x_n]$ is a UFD and considering
  the polynomial $f(X) = \prod_i (X -x_i)$ in
  Theorem~\ref{thm:general-result}, we see that Vandermonde
  determinant falls out as a special case. The elementary divisors of
  a Vandermonde matrix over a Dedekind domain has been calculated by
  M.\@ Bhargava \cite[Lemma 2]{MBGFaDD}.
\end{rem}
\subsection*{Acknowledgements}
The authors would like to thank the Institute of
Mathematical Sciences, Chennai and Indian
Statistical Institute, Bangalore, where various parts of this
work was carried out, for their hospitality and
support. The authors thank  Amritanshu Prasad for suggesting numerous improvements. The authors are grateful to
Vikram Sharma for guiding them with references for the write-up in 
Section~\ref{ssec:time-complexity}.

\bibliographystyle{elsarticle-num} 
\bibliography{refs_for_smith}
\end{document}